\newcommand\bSI[1]{{\small[\SI{}{#1}]}}
\newlength\unitwdth
\newlength\numwdth
\newlength\tdima
\newcommand\SIdescr[2]{
    \setlength\tdima{\linewidth}
    \addtolength\tdima{\@totalleftmargin}
    \addtolength\tdima{-\dimen\@curtab}
    \addtolength\tdima{-\unitwdth}
    \addtolength\tdima{-\numwdth}
    \parbox[t]{\tdima}{
        #1
        \leaders\hbox{$\m@th\mkern \@dotsep mu\hbox{\tiny.}\mkern \@dotsep mu$}
        \hfill
        \ifhmode\strut\fi
        \makebox[0pt][l]{
            \makebox[\unitwdth][l]{}
            \makebox[\numwdth][r]{#2}}}}
\newcommand{\pou}{\Psi^{(j,\tau,N,s)}}
\newcommand{\Bcal}{\mathcal{B}}
\newcommand{\NN}{\mathcal{NN}}
\newcommand{\Realization}{{R}_{\varrho}}
\DeclarePairedDelimiterX{\bweights}[1]{\lVert}{\rVert_{\mathrm{max}}}{#1}
\let\emptyset\varnothing
\newcommand{\CalD}{\mathcal{D}}
\newcommand{\CalC}{\mathcal{C}}
\newcommand{\R}{\mathbb{R}}
\newcommand{\N}{\mathbb{N}}
\newcommand{\Z}{\mathbb{Z}}
\newcommand{\bmat}[2]{\left[ \begin{array}{#1} #2 \end{array} \right]}
\let\emptyset\varnothing
\DeclareDocumentCommand{\Wkp}{ O{k} O{p} O{\Omega}}{{W^{#1,#2}(#3)}}
\DeclareDocumentCommand{\tWkp}{ O{k} O{p} O{\Omega}}{{\wtilde W^{#1,#2}(#3)}}
\DeclareDocumentCommand{\Wkpm}{ O{k} O{p} O{\Omega} O{m}}{{W^{#1,#2}(#3;\,\R^{#4})}}
\DeclareDocumentCommand{\Wkpd}{ O{k} O{p} O{\Omega} m}{W^{#1,#2}(#3;d#4)}
\DeclareDocumentCommand{\Lp}{ O{p} O{\Omega}}{L^{#1}(#2)}
\DeclareDocumentCommand{\Lpd}{ O{p} O{\Omega}}{L^{#1}(#2;dx)}
\DeclareDocumentCommand{\Lip}{ O{\Omega}}{\Cns[0][1][#1]}
\newcommand{\Linf}{{L^{\infty}(\Omega)}}
\newcommand{\Linfc}{{L^{\infty}(\cube^d)}}
\newcommand{\cube}{\intervalo{0}{1}}
\newcommand{\As}{A_{\text{sum}}}
\newcommand{\Phis}{\Phi_{\text{sum}}}
\newcommand{\PhiPs}{\Phi_{P,\eps}}
\newcommand{\epsin}{\intervalo{0}{\nicefrac{1}{2}}}
\renewcommand{\epsilon}{\varepsilon}
\newcommand{\eps}{\varepsilon}
\newcommand{\alplus}{\hspace{5mm}}
\renewcommand{\rho}{\varrho}
\newcommand{\MNd}{\{0,\ldots,N\}^d}
\newcommand{\expl}[1]{\text{\scriptsize{(#1)}}}
\newcommand\numberthis{\addtocounter{equation}{1}\tag{\theequation}}
\newcommand{\wtilde}{\widetilde}
\newcommand{\intervalo}[2]{\left(#1,#2\right)}
\newcommand{\Fndp}{\mathcal{F}_{n,d,p}}
\newcommand{\apmult}{\widetilde\times}
\newcommand{\act}[1]{R_\rho(#1)}
\newcommand{\actbig}[1]{R_\rho\big(#1\big)}
\DeclareDocumentCommand{\icouple}{O{B_0} O{B_1} O{\theta} O{p}}{\left(#1,#2\right)_{#3,#4}}
\definecolor{green}{rgb}{0.2,0.6,0.15}
\DeclareMathOperator{\ran}{Range}
\DeclarePairedDelimiter{\ceil}{\lceil}{\rceil}
\DeclarePairedDelimiter{\floor}{\lfloor}{\rfloor}
\DeclarePairedDelimiterX{\norm}[1]{\lVert}{\rVert}{#1}
\DeclarePairedDelimiterX{\pabs}[1]{\lvert}{\rvert}{#1}
\theoremstyle{definition}
\newtheorem{definition}{Definition}[section]
\theoremstyle{plain}
\newtheorem{remark}[definition]{Remark}
\newtheorem*{remark*}{Remark}
\newtheorem{theorem}[definition]{Theorem}
\newtheorem{lemma}[definition]{Lemma}
\newtheorem{corollary}[definition]{Corollary}
\newtheorem{proposition}[definition]{Proposition}
\newtheorem{observation}[definition]{Observation}
\newtheorem*{overview*}{Overview of our proof strategy}
\let\expandafter\oldproof\csname\string\proof\endcsname
\let\oldendproof\endproof
\renewenvironment{proof}[1][\proofname]{%
	\oldproof[{\bf #1 }]%
}{\oldendproof}
\renewcommand{\thefootnote}{\alph{footnote}}
\newcommand{\pp}{\mathrm{poly}_{m}}
\newcommand{\conc}{{\raisebox{2pt}{\tiny\newmoon} \,}}
\numberwithin{equation}{section}
\definecolor{darkcandyapplered}{rgb}{0.64, 0.0, 0.0}
\newcommand{\drate}{\mu\max\{0, k-\tau\}}
\newcommand{\dratej}[1]{\mu\max\{0, #1-\tau\}}
\newcommand{\mesh}{[-\eps^{-\theta},\eps^{-\theta}]\cap\eps^{\nu}\mathbb{Z}}
\title{Approximation Rates for Neural Networks \\ with Encodable Weights in Smoothness Spaces}
\author{Ingo Gührin$\text{g}^{*\dagger}$ \and Mones Rasla$\text{n}^{*\dagger}$ }
\DeclareDocumentCommand{\Micod}{ O{\mathcal{B}} O{\rho} O{C_0} O{\mathcal{C}} O{\mathcal{D}} O{\eps}}{M^{#1,#2,#3}_{#6}(#4,#5)}
\def\blfootnote{\xdef\@thefnmark{}\@footnotetext}
\begin{document}
\maketitle
\blfootnote{\hspace{-1em}$~^*$Both authors contributed equally.}
\blfootnote{\hspace{-1.4em}$~^\dagger$Institute of Mathematics, Technical University of Berlin,  Stra\ss{}e des 17.~Juni 136, 10623 Berlin, Germany; \linebreak E-Mail: {$\{$\texttt{guehring, raslan$\}$@math.tu-berlin.de}}}

\renewcommand{\thefootnote}{\arabic{footnote}}
\begin{abstract}
We examine the necessary and sufficient complexity of neural networks to approximate functions from different smoothness spaces under the restriction of encodable network weights. Based on an entropy argument, we start by proving lower bounds for the number of nonzero encodable weights for neural network approximation in Besov spaces, Sobolev spaces and more. These results are valid for all sufficiently smooth activation functions. Afterwards, we provide a unifying framework for the construction of approximate partition of unities by neural networks with fairly general activation functions. This allows us to approximate localized Taylor polynomials by neural networks and make use of the Bramble-Hilbert Lemma. Based on our framework, we derive almost optimal upper bounds in higher-order Sobolev norms. This work advances the theory of approximating solutions of partial differential equations by neural networks.
\end{abstract}

\textbf{Keywords:} Neural Networks, Expressivity, Approximation Rates, Smoothness Spaces,  Encodable Network Weights 

\textbf{MSC classification (2010)}:  35A35, 41A25, 41A46, 46E35, 68T05

\section{Introduction}
Deep learning algorithms have lately shown promising results for dealing with classical mathematical problems, such as the solution of \emph{partial differential equations (PDEs)}, see for instance \cite{lagaris1998artificial,weinan2018deep,han2018solving,han2020derivativefree,sirignano2018dgm,weinan2017deep,DeepXDE,MachLearningSPDEJentzen,elbrachter2018dnn, beck2018solving,JentzenKolmogorov,schwab2018deep,grohs2018proof,raslan2019parametric,GeiPM2020,PetersenTransport}. 
In this work, we investigate the necessary and sufficient number of non-zero, encodable\footnote{i.e., representable by a bit-string of moderate length} weights for a vanilla feedforward neural network to approximate functions that are particularly relevant for the solution of PDEs. Notable works in this direction for neural networks with the ReLU (rectified linear unit) activation function
are \cite{guhring2019error,OPS19_811}. Due to the limited regularity of the ReLU, one is only able to derive approximation rates with respect to first-order Sobolev norms. However, in order to appropriately approximate solutions of PDEs of higher-order (i.e., $\geq 3$), approximation rates with respect to higher-order Sobolev norms are required. As an example, consider the \emph{Dirichlet problem for the biharmonic operator $\Delta^2$} (see e.g.~\cite{Ciarlet}) on some domain $\Omega\subset \R^d$, a typical fourth-order problem, which is given by
\begin{equation} \label{eq:FourthOrder}
   - \Delta^2 u = f,\quad\text{on }\Omega\quad+\text{boundary conditions}.
\end{equation}
In its weak formulation, this operator equation is uniquely solvable in some subspace $V$ (incorporating the boundary conditions) of the Sobolev space $\Wkp[2][2][\Omega]$. Additionally (see \cite[Section 6]{Ciarlet}), typical solutions $u$ of \eqref{eq:FourthOrder} are even in the Sobolev space $\Wkp[n][2][\Omega]$ for some $n\geq 3$. This motivates studying approximations of Sobolev-regular functions $f\in\Wkp[n][p]$ by neural networks in higher-order Sobolev norms.
In this paper, we make the following two contributions:

\subsubsection*{I. General Lower Bounds based on Entropy Arguments}

Let $\mathcal{C}\subset \mathcal{D}$ be two function spaces. 
We will lower bound the necessary number for nonzero, encodable weights of neural network approximations of functions from $\mathcal{C}$ with respect to the norm in $\mathcal{D}$. Our notion of a lower bound for the number of nonzero, encodable weights can be summarized as follows:

\emph{For some $\gamma>0$ (depending on $\mathcal{C}$ and $\mathcal{D}$) we have: If for every $\eps>0$ there exists some $M_\eps\in \N$ such that every $f\in \mathcal{C}$ can be $\eps$-approximated by a neural network $\Phi_{\eps,f}$ (i.e.,\ $\|f- \Phi_{\eps,f}\|_{\mathcal{D}}\leq \eps$) with $M_\eps$ nonzero, encodable weights, then (up to a logarithmic factor and for some constant $C$) $M_{\eps}\geq C\eps^{-\gamma}$.}

In~\cite{petersen2017optimal}, the concept of the \emph{$\epsilon$-entropy $H_\eps(\mathcal{C},\mathcal{D})$} was used to derive lower bounds for $M_\eps$ for specific choices of $\mathcal{C}$ and $\mathcal{D}$. In~Theorem~\ref{thm:lower_bound_enc} we generalize that approach to a wide range of function spaces. In detail, we show that every lower bound on the \emph{$\epsilon$-entropy $H_\eps(\mathcal{C},\mathcal{D})$} of the unit ball of $\mathcal{C}$ with respect to $\| \cdot \|_{\mathcal{D}}$ can directly be transferred to a lower bound on the number of nonzero, encodable weights of an approximating neural network. Concretely, if  $H_\eps(\mathcal{C},\mathcal{D})\geq C\eps^{-\gamma},$ then $M_\eps \geq C\eps^{-\gamma}/\log_2(1/\eps)$. 
Since the activation function $\rho$ determines the smoothness of $\Phi_{\eps,f}$ we only have the natural requirement that $\rho$ is smooth enough such that $\Phi_{\eps,f}\in \mathcal{D}$.
        
Since lower bounds on the $\eps$-entropy are well-studied for a variety of classical function spaces\footnote{see for instance \cite{triebel1978interpolation,edmunds_triebel_1996}}, we give a nonexhaustive list of concrete lower complexity bounds in Corollary~\ref{cor:sobolev_low_bound_enc} for Sobolev and Besov spaces. Appositely to the upper bounds that we present below, we state the following special instance of these results: 
For $\mathcal{C}=W^{n,p}(\Omega)$ and $\mathcal{D}=W^{k,p}(\Omega)$ with $n,k\in\N, n>k$ and $1\leq p\leq\infty$ we have $M_\eps\geq C\eps^{-d/(n-k)}/\log_2(1/\eps)$.

\subsubsection*{II. Almost Optimal Upper Bounds in Sobolev Spaces For a Wide Class of Activation Functions} 

We build an abstract, unifying framework which allows to approximate localized Taylor polynomials by neural networks with a wide class of activation functions. This proof strategy was originally used in~\cite{yarotsky2017error} for ReLU neural networks in $L^p$-norms and generalized to first order Sobolev norms in~\cite{guhring2019error}. Those works  heavily rely on the ReLU activation function which allows for the construction of an exact partition of unity (PU). However, constructing localized bump functions that together form a PU by neural networks with general activation function is highly-nontrivial and can, in general, only be done approximately. This means that the localizing bump functions are not compactly supported anymore and their sum only approximates one. We formulate conditions on the asymptotic behavior of the activation function under which such a construction becomes possible in higher-order Sobolev spaces. For this we derive three distinct categories of PUs splitting the domain $(0,1)^d$ into $(N+1)^d$ patches with diameter $1/N$.
\begin{itemize}
    \item \emph{Exact PU}: The $(N+1)^d$ localizing bump functions are compactly supported on the corresponding patch and the sum of the bumps equals one.
    \item \emph{Exponential PU}: For $N\to\infty$, the bumps converge exponentially fast in $N$ towards an exact PU.  
    \item \emph{Polynomial PU}: For $N\to\infty$, the bumps converge with polynomial speed in $N$ towards an exact PU. 
\end{itemize}
In other words, with increasing refinement of the partition the approximate PUs converge towards an exact PU and are categorized by their convergence speed. 

Based on the above categorization, we consider $\eps$-approximations of functions from the unit ball in $\mathcal{C}=\Wkp[n][p][\cube^d]$ where the distance is measured in $\mathcal{D}=\Wkp[k][p][\cube^d]$ norms ($n\in \N_{\geq k+1},$ $k\in \N$ and $1\leq p\leq \infty$) and derive for each case different approximation rates. We demonstrate this for three representative examples.
\begin{itemize}
    \item The \emph{rectified power unit (RePU)} of order $j\in\N_{\geq 2},$ given by $\mathrm{ReLU}^j,$ allows for the construction of exact PUs. In this case, for every $k\in\{0,\dots,j\},$ we need at most $C\eps^{-d/(n-k)}$ non-zero weights. 
    \item The \emph{softplus function}, given by $\ln(1+e^x),$ allows for the construction of exponential PUs. In this case, for  $k\in\N_0,$ and arbitrary $\mu>0,$ we need at most \begin{align*}
       \begin{cases} C\eps^{-d/(n-k)}, \quad &\text{if } k\leq 1, \\ C\eps^{-d/(n-k-\mu)}, \quad &\text{if } k\geq 2,\end{cases}
    \end{align*} non-zero weights.
    \item The \emph{inverse square root linear unit}, given by $\mathbbm{1}_{[0,\infty)} x+ \mathbbm{1}_{(-\infty,0)} \frac{x}{\sqrt{1+x^2}}$, allows for the construction of polynomial PUs. In this case, for $k\in\{0,1\},$ we need at most $C\eps^{-d/(n-k)}$ non-zero weights.
    
    Generally speaking, in the case of polynomial PUs, we are only able to show approximation rates in smoothness norms of a restricted order, depending on the asymptotic behavior of the underlying activation function. We describe the reasons for this issue in more detail in Section~\ref{subsec:MainRes}. 
\end{itemize}
 In all cases the depth of the constructed networks is constant (i.e.\ accuracy-independent) and greater than two.
Afterwards, we additionally show that the weights of $\Phi_{\eps,f}$ can be encoded by $C\log_2(1/\eps)$ bits which guarantees that the approximation complexity is not hidden in weights carrying arbitrarily complex information.

\vspace{1em}
\noindent As already outlined in~\cite[Section 1.4]{guhring2019error}, we observe in both, lower and upper bounds, a \emph{trade-off} between the complexity of the approximating neural networks and the order of the approximation norm: A higher order of $k$ requires neural networks with asymptotically more nonzero weights. Additionally, up to a log-factor (and in some cases up to $\mu>0$), our upper bounds are \emph{tight} if we only allow encodable weights.

\subsubsection*{Related Work}\label{subsec:RelWork}
The universal approximation theorem \cite{cybenko1989approximation,hornik1991approximation} is often regarded as the starting point of approximation theory for neural networks. It shows that every continuous function defined on a compact domain can be uniformly approximated by shallow neural networks under some assumptions on the activation function. Extensions of this theorem (see \cite[Section 4]{pinkus1999approximation} and the references therein) also take derivatives into account. In more detail, it has been established that shallow neural networks with sufficiently regular activation function and unrestricted width are dense in the space $C^m,$ where $m\in \N$. The existence of an activation function such that restricted width and depth networks are universal is shown in~\cite{Maiorov1999LowerBounds} and an explicit activation function based on the countability of the rational numbers with that property is constructed in~\cite{guliyev2018approximation}. For ReLU networks with restricted width and unbounded depth universality is established in~\cite{Kidger2019UniversalAW}.

The necessary and sufficient complexity of (higher-order) sigmoidal neural network approximations for (piecewise) smooth functions has been studied in~\cite{Barron1994,Mhaskar:1996:NNO:1362203.1362213,  bolcskei2017optimal,ohnELU}. The results in~\cite{Mhaskar:1996:NNO:1362203.1362213} are derived by approximating global (not localized) polynomials with degree increasing concurrently with the approximation accuracy. Our results include these approximation rates as a special case based on an alternative proof strategy. The ansatz in~\cite{Mhaskar:1996:NNO:1362203.1362213} can be used for $C^\infty$ activation functions with non vanishing derivatives at some point to obtain network approximations with constant depth and increasing width. Vanishing derivatives of the activation function need to be compensated by increasing depth in order to construct polynomials of increasing degree. This approach is utilized in~\cite{RePURates,ChebNet}, where approximations of weighted $L^2$-spaces by RePU-neural networks are derived\footnote{which are able to represent polynomials with zero error}. The function spaces considered therein can be efficiently described by non-localized (Jacobi or Chebychev) polynomials. Complexity bounds for ReLU neural networks based on localized polynomial approximation can be found in~\cite{yarotsky2017error,schmidt2017nonparametric,petersen2017optimal,suzuki2018adaptivity,ohnELU}. The upper bounds in~\cite[Thm.\ 1]{yarotsky2017error} are covered by our framework as a special case. In~\cite{petersen2017optimal}, localization is achieved by approximating characteristic functions. Our notion of PUs is general enough to include this approach but we focus on different function classes. Localization by means of wavelet approximations on manifolds is utilized in~\cite{ShaCC2015provableAppDNN} and by means of general affine systems in~\cite{bolcskei2017optimal}. The approximation error in all of these papers is measured with respect to $L^p$-norms. Only the papers~\cite{bolcskei2017optimal,petersen2017optimal} consider the restriction of encodable weights.

In this paper we are primarily interested in the approximation of functions with respect to Sobolev norms. In this direction, we mention two works, which examine the approximation capabilities of ReLU-neural networks with respect to $W^{1,p}$ norms. The paper \cite{guhring2019error} derives lower complexity bounds based on a VC dimension argument for unrestricted neural network weights (similar to the one presented in \cite{yarotsky2017error}) and upper bounds based on the emulation of localized polynomials for continuous, piecewise linear activation functions. These upper bounds are included in our results as a special case. In~\cite{OPS19_811} approximation rates were derived by re-approximating finite elements. None of these papers examine neural networks with encodable weights. 

We conclude this section by giving an overview of further works that introduce different types of PUs. An approach which is similar to ours for functions of sigmoidal type has been used in~\cite{COSTARELL1, COSTARELLI2, COSTARELLI3}. There, approximate bumps are constructed from differences of scaled and shifted sigmoidals. 
The key difference is that for a fixed patch the contributions of the neighboring approximate bump functions do not decrease with the number of patches $N$ going to infinity which is an important factor in our construction. In~\cite{PartitionPaper}, characteristic functions $\chi_\mathrm{p}$ for each patch are $L^\infty$-approximated in order to achieve localization. However, in this work, the Heaviside function is used as an activation function in the first layer (followed by a different activation function in the next layer), which is not transferable to our work, since it prevents higher order Sobolev approximations.


\subsubsection*{Outline}

After having introduced the necessary terminology for neural networks in Section~\ref{sec:Terminology}, we start by proving general lower complexity bounds in Section~\ref{sec:Lower}. In~Section~\ref{sec:main}, we derive almost optimal upper approximation rates for neural networks with fairly general activation functions. We describe the necessary ingredients for these results in Section \ref{subsec:IngredientI} and \ref{subsec:IngredientII} before outlining the main results as well as the underlying proof strategy in Section \ref{subsec:MainRes}. The proofs of the two main results in this section, Proposition~\ref{prop:main} and Theorem~\ref{thm:main}, can be found in Appendix~\ref{app:ProofMainProp} and Appendix~\ref{app:Encod}, respectively. To not interrupt the flow of reading, the notation section, basic facts about Sobolev spaces and basic operations one can perform with neural networks have been deferred to Appendices \ref{app:Not}-\ref{app:NNCalc}, respectively. An analysis of the PU-properties of many practically used activation functions can be found in Appendix \ref{sec:activation_admissibility}.

\section{Neural Networks with Encodable Weights: Terminology} \label{sec:Terminology}

We start by formally introducing neural networks closely sticking to the notions introduced in \cite{petersen2017optimal}. In the following, we will distinguish between a \emph{neural network}
as a structured set of weights and the associated function implemented by the network, called its \emph{realization}.
Towards this goal, let us fix numbers $L, d=N_0, N_1, \dots, N_{L} \in \N$.
\begin{itemize}
    \item  A family $\Phi = \big( (A_\ell,b_\ell) \big)_{\ell = 1}^L$ of matrix-vector tuples of the form $A_\ell \in  \R^{N_{\ell}, N_{\ell-1}}$ and $b_\ell \in \R^{N_\ell}$ is called \emph{neural network}.
    \item We refer to the entries of $A_\ell,b_\ell$ as the weights of $\Phi$ and call $M(\Phi)\coloneqq \sum_{\ell=1}^L \left(\|A_\ell\|_0+\|b_\ell\|_0 \right)$ its  \emph{number of nonzero weights}, 
 $L = L(\Phi)$ its \emph{number of layers} and we call $N_\ell$ the \emph{number of neurons in layer $\ell$}.
 \item We denote by $d\coloneqq N_0$ the \emph{input dimension} of $\Phi$ and by $N_L$ the \emph{output dimension}. 
 \item Moreover, we set  $$\|\Phi\|_{\max}\coloneqq \max_{\ell=1,\dots,L} \max_{\substack{i=1,\dots, N_\ell \\ j=1,\dots,N_{\ell-1}}} \max\{|(A_\ell)_{i,j}|,|(b_\ell)_i|\},$$ 
 which is the \emph{maximum absolute value of all weights}.
 \item For defining the realization of a network $\Phi = \big( (A_\ell,b_\ell) \big)_{\ell=1}^L,$ we additionally fix an \emph{activation function} $\varrho:\R\to \R$ and a set $\Omega\subset \R^d$.
The \emph{realization of the network} $\Phi = \big( (A_\ell,b_\ell) \big)_{\ell=1}^L$
is the function
\begin{align*}
  R_\varrho \left( \Phi \right) :
  \Omega \to \R^{N_L} , \ \
  x \mapsto x_L \, ,
\end{align*}
where $x_L$ results from the following scheme:
\begin{equation*}
  \begin{split}
    x_0 &\coloneqq x, \\
    x_{\ell} &\coloneqq \varrho(A_{\ell} \, x_{\ell-1} + b_\ell),
    \quad \text{ for } \ell = 1, \dots, L-1,\\
    x_L &\coloneqq A_{L} \, x_{L-1} + b_{L},
  \end{split}
\end{equation*}
and where $\varrho$ acts componentwise. 
\item We denote by $\mathcal{NN}_{\varrho}^{d}$ the \emph{set of all $\varrho$-realizations of neural networks with input dimension $d$ and output dimension $1$}.\footnote{In the following we will  denote by \emph{($\varrho$-)neural networks} both neural networks and their corresponding realizations as long it is clear from the context what is meant.}
\end{itemize}

\subsubsection*{Encodability}
In the following, we study neural networks with \emph{encodable} weights. This information-theoretic viewpoint has already been examined in~\cite{bolcskei2017optimal,petersen2017optimal} and is motivated by the observation that on a computer only weights of limited complexity (w.r.t.\ their bit-length) can be stored. In this paper, we consider weights that can be encoded by bit-strings with length logarithmically growing in $1/\eps$, where $\eps$ is the approximation accuracy.

To make the notion of encodability more precise, we first introduce coding schemes (see~\cite{petersen2017optimal}):
A \emph{coding scheme (for real numbers)} is a sequence $\mathcal{B}=(B_\ell)_{\ell\in\N}$ of maps $B_\ell:\{0,1\}^\ell\to \R$. 
Now we define sets of neural networks with weights encodable by a coding scheme. Given an arbitrary coding scheme $\mathcal{B}=(B_\ell)_{\ell\in\N},$ and $d\in\N, \eps,M>0$, we denote by 
\begin{equation}\label{eq:encod_notation}
    \mathcal{NN}^{\mathcal{B}}_{M,\ceil{C_0 \log_2(1/\eps)},d}
\end{equation} the set of all neural networks $\Phi$ with $d$-dimensional input, one-dimensional output and at most $M$ nonzero weights such that \emph{each nonzero weight of $\Phi$ is contained in $\mathrm{Range}(B_{\ceil{C_0 \log_2(1/\eps)}})$}.

\section{Lower Bounds For Neural Networks with Encodable Weights and General Activation Functions}\label{sec:Lower}

In this section, we derive lower bounds on the necessary number of nonzero, encodable weights of neural network approximations. The approximated function spaces include a wide variety of classical smoothness spaces and the accuracy is measured in rather general norms. Our result applies to every activation function that is sufficiently smooth to be considered in these norms. We note that the proof of our result is essentially an abstract version of the proof of \cite[Theorem 4.2]{petersen2017optimal}. After encouragement of one of the authors\footnote{We want to take the opportunity to thank Philipp Petersen for the fruitful suggestion.} of~\cite{petersen2017optimal} and after studying the paper more closely, we noticed that it is possible to consider the proof strategy of \cite[Theorem 4.2]{petersen2017optimal} in a very general setting which we will outline below. Throughout this section (unless stated otherwise) we fix some $d\in \N,$ some domain $\Omega\subset \R^d$ and two normed spaces $\mathcal{C},\mathcal{D}$ of (equivalence classes of) functions defined on $\Omega$ with values in $\R.$ Additionally, we assume that $\mathcal{C}\subset \mathcal{D}$.

First of all, we need the notion of the \emph{minimax code length $L_\eps(\mathcal{C},\mathcal{D})$ of $\CalC$ with respect to $\CalD$.} The minimax code length  describes the uniform description complexity of the set $\{f\in \CalC:\|f\|_{\CalC}\leq 1\}$ in terms of the number of nonzero bits necessary to encode every $f$ with distortion at most $\eps$ in $\CalD.$ It can be directly related to approximation capabilities of arbitrary computing schemes and is defined as follows (see also \cite[Definition B.2]{petersen2017optimal}):
\begin{definition}[Minimax Code Length]
    Let $\ell\in \N$. We denote by $\mathfrak{E}^{\ell} \coloneqq \left\{E:\CalC\to \{0,1\}^\ell \right\}$ the set of binary encoders mapping elements of $\CalC$ to bit strings of length $\ell,$ and by $\mathfrak{D}^{\ell}\coloneqq \{D:\{0,1\}^\ell\to \CalD\}$ the set of binary decoders mapping bit-strings of length $\ell$ into $\CalD$. For $\eps>0,$ we define the \emph{minimax code length} by $$ L_\eps(\mathcal{C},\mathcal{D}) \coloneqq \min \left\{\ell\in \N: \exists (E^\ell,D^\ell)\in \mathfrak{E}^{\ell}\times \mathfrak{D}^{\ell}:\sup_{f\in \CalC:\|f\|_{\CalC}\leq 1} \|D^\ell(E^\ell(f))-f\|_{\CalD}\leq \eps\right\}.$$
\end{definition}

The next observation demonstrates in the context of neural networks how the minimax code length can be employed to derive lower bounds for approximations.
\begin{observation}\label{ob:lowerbound_architecture}
Let $\eps>0$ and $\varrho:\R\to\R$ such that $\mathcal{NN}^{d}_\varrho \subset \mathcal{D}$. If $\mathcal{A}$ is a neural network architecture with $M$ unspecified nonzero weights\footnote{Or any computation scheme that takes as input $M$ parameters.} (but fixed number of layers, neurons and position of nonzero weights) such that for each $f\in \CalC$ there is a set of weights $w_1,\ldots, w_M$, where each weight can be encoded by at most $b\in\N$ bits and $\norm{\act{\mathcal{A}(w_1,\ldots,w_M)}-f}_\CalD\leq \eps$, then 
\[
M\geq L_\eps(\CalC, \CalD)/b.
\]
 Mapping $f\in\CalC$ to the bit representation of the $M$ weights can be viewed as an encoder, and mapping the encoded weights to $\act{\mathcal{A}(w_1,\ldots,w_M)}$ acts as a decoder with bit length $\ell=Mb$, which shows the claim. This in particular holds true, if $b\leq C\log_2(1/\eps)$ which is the focus of this paper. 
\end{observation}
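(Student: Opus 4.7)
The plan is to exhibit an encoder/decoder pair of total bit length $\ell = Mb$ that meets the uniform $\eps$-distortion requirement in the definition of $L_\eps(\CalC,\CalD)$, and then invoke the minimality in that definition. All the ingredients for this construction are in the hypothesis: the architecture $\Acal$ is fixed (so the positions of nonzero weights are known a priori), each of the $M$ weights is encodable in $b$ bits, and $R_\varrho(\Acal(w_1,\dots,w_M))$ lies in $\CalD$ because $\NN_\varrho^d \subset \CalD$.

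Concretely, I would first fix, for every $f \in \CalC$ with $\|f\|_\CalC \leq 1$, a choice of weights $w_1(f),\dots,w_M(f)$ satisfying $\|R_\varrho(\Acal(w_1(f),\dots,w_M(f))) - f\|_\CalD \leq \eps$; such a choice exists by hypothesis (invoking the axiom of choice if needed, since no regularity of the map $f \mapsto (w_i(f))$ is required). For each $i$, since $w_i(f)$ is encodable in $b$ bits, fix a bit-string $\beta_i(f) \in \{0,1\}^b$ representing it together with a decoding rule $w_i = \kappa(\beta_i)$ inverse to this encoding. Then define $E^\ell : \CalC \to \{0,1\}^\ell$ with $\ell = Mb$ by concatenation, $E^\ell(f) \coloneqq (\beta_1(f),\dots,\beta_M(f))$.

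The decoder $D^\ell : \{0,1\}^\ell \to \CalD$ is defined by splitting an input bit-string into $M$ blocks of length $b$, applying $\kappa$ to recover weights $w_1,\dots,w_M$, inserting them into the fixed architecture $\Acal$, and returning the realization $R_\varrho(\Acal(w_1,\dots,w_M))$. By the containment $\NN_\varrho^d \subset \CalD$ the output indeed lies in $\CalD$, so $D^\ell \in \mathfrak{D}^\ell$. By construction, for every $f \in \CalC$ with $\|f\|_\CalC \leq 1$,
\[
    \|D^\ell(E^\ell(f)) - f\|_\CalD = \|R_\varrho(\Acal(w_1(f),\dots,w_M(f))) - f\|_\CalD \leq \eps.
\]
Hence $(E^\ell, D^\ell)$ is an admissible pair for distortion $\eps$, so $L_\eps(\CalC,\CalD) \leq \ell = Mb$, which rearranges to $M \geq L_\eps(\CalC,\CalD)/b$.

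There is no substantial obstacle here; the argument is essentially a bookkeeping identification of an architecture with encodable weights as a special coding scheme. The only mild subtlety worth mentioning is that the construction of $E^\ell$ requires a pointwise choice of weights per function $f$, but since $\mathfrak{E}^\ell$ in the definition of $L_\eps$ imposes no measurability or continuity on the encoder, this selection is unproblematic.
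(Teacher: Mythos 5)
Your proof is correct and is essentially the paper's own argument, expanded: the paper's one-sentence justification already identifies the encoder as the map $f\mapsto(\beta_1(f),\dots,\beta_M(f))$ and the decoder as the map plugging the decoded weights into the fixed architecture, and you simply spell out the pointwise selection and the block-splitting. Nothing genuinely different in route or generality.
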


In the following, we exploit this strategy to show that the same bound actually holds true, if we allow for the architecture to depend on the function to be approximated. That means, for each $f\in\CalC$ the number of layers, neurons and position of $M$ nonzero encodable weights (and the weights themselves) may change but need to be encoded. The next lemma (shown in \cite[Lemma~B.4]{petersen2017optimal} under the additional restriction that $\varrho(0)=0$\footnote{The Lemma is proven by first noting that a network with arbitrary number of neurons and layers, but $M$ non-zero weights, can be replaced by a network with the same number of non-zero weights, but number of neurons and layers bounded by $M+1$. This can be done by removing neurons that do not contribute to the next layer. This strategy (see also~\cite[Proposition~3.6]{bolcskei2017optimal}) allows us to drop the assumption that $\rho(0)=0$ from~\cite[Lemma~B.4]{petersen2017optimal}.}) shows the number of bits needed to encode this information.
\begin{lemma}\label{lem:nn_encoder}
Let $M,K\in\N$, and let $\mathcal{B}$ be an encoding scheme for real numbers and $\rho:\R\to\R$ an activation function. There is a constant $C=C(d)$, such that there is an injective map $\Gamma:\{\act{\Phi}: \Phi\in\mathcal{NN}^{\mathcal{B}}_{M,K,d}\}\to\{0,1\}^{CM(K+\lceil \log_2 M\rceil)}$.
\end{lemma}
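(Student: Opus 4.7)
The plan is to split the construction into three steps: first prune any $\Phi\in\NN^{\mathcal{B}}_{M,K,d}$ to a canonical form whose architecture is bounded polynomially in $M$; second, specify a decodable bit-string encoding of this canonical form with the claimed length; and third, pass from networks to realizations via a one-sided inverse of the decoder.

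For the pruning step I would perform two sweeps, crucially without invoking $\rho(0)=0$. First sweep: whenever a hidden neuron $i$ in layer $\ell$ has vanishing outgoing column $(A_{\ell+1})_{:,i}=0$, delete it together with the corresponding row of $A_\ell$ and entry of $b_\ell$; since $(x_\ell)_i$ does not enter $x_{\ell+1}$ at all, the realization is unaffected. Second sweep: whenever a hidden neuron $i$ has vanishing incoming row $(A_\ell)_{i,:}=0$ and bias $(b_\ell)_i=0$, its post-activation is the constant $\rho(0)$, so we delete it and update $b_{\ell+1}\leftarrow b_{\ell+1}+\rho(0)(A_{\ell+1})_{:,i}$; this may create new nonzero bias entries, but it removes exactly as many nonzero entries from column $i$ of $A_{\ell+1}$, so the total count of nonzero weights does not grow. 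Iterating and collapsing any layer whose weight matrix is identically zero into the next affine map yields $\Phi'$ with $R_\rho(\Phi')=R_\rho(\Phi)$, $M(\Phi')\leq M$, and, because every remaining hidden neuron has an outgoing nonzero weight and each non-output layer carries at least one nonzero weight, $L(\Phi')\leq M+1$ and $N_\ell(\Phi')\leq M+1$ for every $\ell$ (as in \cite[Proposition~3.6]{bolcskei2017optimal}).

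For the encoding step I would serialize $\Phi'$ by concatenating: (i) a header listing $L$ and the widths $(N_0=d, N_1, \dots, N_L)$ in fixed-length binary of $\lceil\log_2(M+2)\rceil$ bits each, totalling $O(M\log M)$ bits; and (ii) a list of at most $M$ records, one per nonzero weight, each consisting of a position descriptor (layer index, a flag distinguishing matrix from bias entry, and the required row/column indices, each of length $O(\log M)$) together with a value descriptor of exactly $K$ bits giving the preimage under $B_K$ of that weight. Unused record slots are marked by a reserved tag. Since all positional data fit in $O(\log M)$ bits and there are $O(M)$ such fields apart from the weight values, the total length is bounded by $CM(K+\lceil\log_2 M\rceil)$ for some $C$ depending only on $d$ (absorbing the header field $N_0=d$).

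This encoding is uniquely decodable to a pruned network, and hence to its realization under $R_\rho$, so I define $\Gamma$ on $\{R_\rho(\Phi):\Phi\in\NN^{\mathcal{B}}_{M,K,d}\}$ by choosing, for each realization $f$, a representative network $\Phi_f$ and letting $\Gamma(f)$ be the encoding of its pruned form $\Phi_f'$. If $\Gamma(f_1)=\Gamma(f_2)$ then decoding yields a single pruned network $\Phi'$, whence $f_1=R_\rho(\Phi')=f_2$, establishing injectivity. The principal technical obstacle is the pruning argument in the absence of $\rho(0)=0$, because constant contributions from "dead" neurons would otherwise leak into subsequent biases and disturb the weight count; the two-pass deletion above is precisely the fix alluded to in the footnote, and the rest is bookkeeping.
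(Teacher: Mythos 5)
Your second sweep breaks the encoding step. When you delete a dead neuron $i$ and absorb its constant contribution by updating $b_{\ell+1}\leftarrow b_{\ell+1}+\rho(0)\,(A_{\ell+1})_{:,i}$, the updated bias entries take the form $(b_{\ell+1})_j+\rho(0)(A_{\ell+1})_{j,i}$, and nothing in the hypotheses on the coding scheme $\mathcal{B}$ makes $\mathrm{Range}(B_K)$ closed under such sums and scalings. So the pruned network is generically no longer in $\mathcal{NN}^{\mathcal{B}}_{M,K,d}$, and your serialization, which specifies each weight by its $K$-bit preimage under $B_K$, has nothing to point at for these new entries. The good news is that the second sweep is not needed: after the first sweep alone, every remaining hidden neuron has a nonzero outgoing column, and distinct hidden neurons occupy disjoint outgoing entries, so the map assigning to each hidden neuron one of its nonzero outgoing weights is injective into the set of $M$ nonzero weights. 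This already bounds the total number of hidden neurons by $M$, hence each width and the number of non-empty hidden layers by $M$, without touching a single weight value, so every entry of the pruned network is one of the original, encodable weights. That is also the point of the footnote: the assumption $\rho(0)=0$ in \cite[Lemma~B.4]{petersen2017optimal} was only used to zero out dead neurons, and the observation is that one need not zero them out at all.

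One further detail to tighten: ``collapsing a layer whose weight matrix is identically zero into the next affine map'' does not quite parse, because once a hidden layer is emptied the next post-activation $x_{\ell+1}=\rho(b_{\ell+1})$ is a constant passed through $\rho$, not an affine map one can compose forward. The clean fix is to locate the last empty hidden layer $\ell^\ast$ (if any), discard everything through $\ell^\ast$, and encode the remaining suffix (whose weights are all original and whose width and depth are bounded by the first-sweep count) together with a one-bit flag recording that the realization is constant. Apart from these two issues, your counting, positional serialization, and injectivity argument through the decoder are sound and follow the same route as the paper.
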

 
To make the main statement of this section mathematically more precise, we introduce some further notation.
\begin{definition}\label{def:WeightBasic}
     Let $C_0>0$ be fixed. Additionally, let $f\in \CalC$, and for some function $\rho:\R\to \R$  assume that $\NN_\rho^{d}\subset \CalD$. Finally, let $\epsilon>0$ and fix some coding scheme  $\mathcal{B}$. Then, for $C_0>0,$ we define the quantities\footnote{we use the convention that $ \min \emptyset  = \infty$.}
    \[
    M^{\mathcal{B}}_{\eps}(f)\coloneqq M^{\mathcal{B},\varrho,C_0,\CalC,\CalD}_\eps(f)\coloneqq \min\left\{M\in \N: \exists \Phi \in \NN^{\Bcal}_{M, \ceil{C_0\cdot\log_2{\frac{1}{\eps}}},d}:\norm{f-\act{\Phi}}_{\mathcal{D}}\leq \eps\right\},
    \]
    and 
    \[
    M^{\mathcal{B}}_\eps(\CalC,\CalD)\coloneqq\Micod\coloneqq\sup_{f\in \CalC,~\|f\|_{\CalC}\leq 1} M_\eps^{\mathcal{B},\varrho,C_0,\CalC,\CalD}(f).
    \]
\end{definition}
In other words, the quantity $M^{\mathcal{B}}_\eps(f)$ denotes the required number of nonzero weights of a neural network~$\Phi$ to $\epsilon$-approximate $f$ with weights that can be encoded with $\lceil C_0 \log_2(1/\epsilon) \rceil$ bits using the coding scheme $\mathcal{B}$. $ M^{\mathcal{B}}_\eps(\CalC,\CalD)$ gives a uniform bound of this quantity over the unit ball in $\CalC$. 

Theorem \ref{thm:lower_bound_enc} now states that if we can lower bound the minimax code length, then we are also able to lower bound $M^{\mathcal{B}}_\eps(\mathcal{C},\mathcal{D})$.
Lower bounds on the minimax code length (and hence for the quantity $M^{\mathcal{B}}_\eps(\mathcal{C},\mathcal{D})$) for specific, frequently used function spaces fulfilling the assumptions of the theorem will be given in Corollary~\ref{cor:sobolev_low_bound_enc}. 
\begin{theorem}\label{thm:lower_bound_enc}
Let $\varrho:\R\to\R$ such that $\mathcal{NN}^{d}_\varrho \subset \mathcal{D}$. Additionally, assume that $L_\eps(\mathcal{C},\mathcal{D})\geq C_1 \eps^{-\gamma}$  for some $\gamma=\gamma(\CalC,\CalD), C_1=C_1(\CalC, \CalD)>0$ and all $\eps>0$. Then, for each $C_0>0$ there exists a constant $C = C(\gamma, \CalC, \CalD, C_0)>0$, such that for each coding scheme of real numbers $\Bcal$, and for all $\eps\in \epsin$ we have
\begin{align*}
 M^{\mathcal{B},\varrho,C_0}_\eps(\mathcal{C},\mathcal{D})\geq C\cdot\eps^{-\gamma} \Big/ \log_2\left(\frac{1}{\eps}\right).
\end{align*}
\end{theorem}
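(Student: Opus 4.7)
The strategy is the one hinted at in Observation~\ref{ob:lowerbound_architecture}, now executed without restricting the architecture a priori: use Lemma~\ref{lem:nn_encoder} to convert the existence of approximating networks into an encoder/decoder pair, then play that against the assumed lower bound on the minimax code length and solve the resulting inequality for $M_\eps^{\mathcal{B}}(\CalC,\CalD)$.

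First, fix $\eps\in(0,1/2)$ and write $M\coloneqq M^{\mathcal{B},\varrho,C_0}_\eps(\CalC,\CalD)$; if $M=\infty$ there is nothing to prove, so assume $M<\infty$. For every $f$ in the unit ball of $\CalC$ there exists, by definition, a network $\Phi_f\in\NN^{\Bcal}_{M,K,d}$ with $K\coloneqq \lceil C_0\log_2(1/\eps)\rceil$ such that $\|f-R_\varrho(\Phi_f)\|_{\CalD}\leq\eps$. Because $R_\varrho(\Phi_f)\in\CalD$ by the hypothesis $\NN_\varrho^d\subset\CalD$, Lemma~\ref{lem:nn_encoder} supplies an injection
\[
\Gamma:\{R_\varrho(\Phi):\Phi\in\NN^{\Bcal}_{M,K,d}\}\to\{0,1\}^{\ell},
\qquad \ell\coloneqq CM\bigl(K+\lceil\log_2 M\rceil\bigr),
\]
with $C=C(d)$. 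Defining the encoder $E^\ell(f)\coloneqq \Gamma(R_\varrho(\Phi_f))$ and the decoder $D^\ell$ as any left inverse of $\Gamma$ (extended arbitrarily off its range and composed with the inclusion into $\CalD$), we obtain $D^\ell(E^\ell(f))=R_\varrho(\Phi_f)$, so the distortion is at most $\eps$ uniformly over the unit ball. Hence by definition of the minimax code length,
\[
L_\eps(\CalC,\CalD)\leq \ell = CM\bigl(K+\lceil\log_2 M\rceil\bigr).
\]

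Combining this with the assumption $L_\eps(\CalC,\CalD)\geq C_1\eps^{-\gamma}$ yields the master inequality
\begin{equation}\label{eq:master}
M\bigl(K+\lceil\log_2 M\rceil\bigr)\;\geq\;\frac{C_1}{C}\,\eps^{-\gamma}.
\end{equation}
It remains to turn \eqref{eq:master} into the advertised lower bound on $M$. The only subtle point is that $\log_2 M$ appears on the left side, so one cannot simply divide by $K$; this is where the slight slack encoded in the $1/\log_2(1/\eps)$ factor enters, and it is the one place that requires genuine care.

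I would dispose of this by a one-line contradiction argument. Suppose toward a contradiction that $M<\tilde C\,\eps^{-\gamma}/\log_2(1/\eps)$ for a constant $\tilde C$ to be chosen. Since $\eps<1/2$, taking logarithms gives $\log_2 M\leq \log_2\tilde C + \gamma\log_2(1/\eps) - \log_2\log_2(1/\eps)\leq (\gamma+1)\log_2(1/\eps)$ after absorbing additive constants into the hypothesis $\eps\leq\eps_0$ (the finitely many remaining $\eps$ can be handled by enlarging the final constant). Together with $K\leq C_0\log_2(1/\eps)+1$, this yields $K+\lceil\log_2 M\rceil\leq (C_0+\gamma+3)\log_2(1/\eps)$, whence
\[
M\bigl(K+\lceil\log_2 M\rceil\bigr)\;<\;\tilde C(C_0+\gamma+3)\,\eps^{-\gamma}.
\]
Choosing $\tilde C\coloneqq \tfrac{C_1}{2C(C_0+\gamma+3)}$ contradicts \eqref{eq:master}, and therefore $M\geq \tilde C\,\eps^{-\gamma}/\log_2(1/\eps)$ with a constant depending only on $\gamma,\CalC,\CalD,C_0$, as claimed.

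\textbf{Main obstacle.} Nothing is genuinely hard once Lemma~\ref{lem:nn_encoder} is in hand; the delicate bookkeeping is only in step~4, where the $\lceil\log_2 M\rceil$ term must be bounded by $O(\log_2(1/\eps))$. This is precisely why the claim carries the logarithmic defect: if $M$ were exponentially large in $1/\eps$, a clean division by $K$ would be impossible, but the contradiction argument shows such a regime is self-defeating. A secondary technical nuisance is verifying that the decoder $D^\ell$ may indeed be taken to land in $\CalD$, which is exactly the role played by the hypothesis $\NN_\varrho^d\subset\CalD$.
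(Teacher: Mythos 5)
Your proposal is correct and follows the same route as the paper: build an encoder from Lemma~\ref{lem:nn_encoder} and a decoder from its left inverse, bound $L_\eps(\CalC,\CalD)$ from above by $CM(K+\lceil\log_2 M\rceil)$, and play this against the assumed lower bound on the minimax code length. The paper's proof sketch compresses the final step into one sentence; your contradiction argument simply spells out the bookkeeping needed to extract $M\gtrsim\eps^{-\gamma}/\log_2(1/\eps)$ from the master inequality, and you also (correctly) fix the small slip in the paper's sketch where the decoder is said to map into $\CalC$ rather than $\CalD$.
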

The idea for the proof of this theorem is the same as for Observation~\ref{ob:lowerbound_architecture}. Here, the encoder is $E:\CalC\to\{0,1\}^{\ell}, f\mapsto\Gamma(\act{\Phi_{\eps,f}})$, where $\Phi_{\eps,f}$ is the neural network $\eps$-approximating $f$, $\Gamma$ the network encoder from~Lemma~\ref{lem:nn_encoder} and $\ell=CM(\log_2(1/\eps) + \log_2(M))$. The decoder is given by $D:\{0,1\}^\ell\to\CalC, b\mapsto\Gamma^{-1}(b)$. The bound now follows from $CM(\log_2(1/\eps) + \log_2(M))\geq C_1\eps^{-\gamma}$.

\begin{remark}[Activation Functions]
We only require sufficient smoothness of the activation function for the spaces under consideration. 
Hence, we are in a position to conclude suitable lower bounds for \emph{all} practically used activation functions. 
\end{remark}
\begin{remark}[Bounds With Non-Encodable Weights]\label{rem:lower_bounds_nonenc}
     If one drops the restriction of encodable weights and considers the more general setting of arbitrary weights, a lesser number of weights is required in general. For this setting, we mention two examples.
     
     \begin{itemize}
         \item  The results from~\cite{yarotsky18a,guhring2019error} combined state:
    
\begin{adjustwidth}{}{3em}
    \emph{For $\CalC=\Wkp[n][\infty][(0,1)^d]$ and $\CalD=\Wkp[k][\infty][(0,1)^d]$ with $k=0,1$, it holds for the necessary number of nonzero weights $M_\eps$ to achieve an $\eps$-approximation in $W^{k,\infty}$ norm that
\begin{equation*}
M_\eps\geq C\eps^{-d/(2n-k)}.
\end{equation*}}
\end{adjustwidth}
For $k=0$, in~\cite{yarotsky18a} neural networks are constructed that achieve this approximation rate.
In comparison, our entropy bounds show that under the assumption of encodable weights $M_\eps\geq C \eps^{-d/(n-k)}$ (suppressing the $\log_2(\nicefrac{1}{\eps})$ factor for simplicity of exposition).
\item In~\cite{guliyev2016single} it is shown that there exists an activation function such that a neural network with three parameters is able to uniformly approximate each function in $\mathcal{C}=C([0,1])$ arbitrary well. Observation~\ref{ob:lowerbound_architecture} now shows that there is no finite encoding bit length for the weights necessary to approximate all functions in the unit ball of $C([0,1])$, since in this case $L_\eps(\CalC, \CalC)=\infty$ for $0<\eps<1$.\footnote{$L_\eps(\CalC, \CalC)=\infty$ for $0<\eps<1$ follows from the fact that the unit ball in $\CalC=C([0,1])$ is not compact. The same argument can also be used to directly deduce from the construction of the weights in~\cite{guliyev2016single} that their encoding bit length is not finite.}
     \end{itemize}
\end{remark}
We proceed by listing a variety of lower bounds for a selection of specific examples for frequently used function spaces. One can deduce similar lower bounds for other choices of $\mathcal{C},\mathcal{D}$. Notable examples that are not covered below include H\"older spaces, Triebel-Lizorkin, or Zygmund spaces (see for instance \cite{triebel1978interpolation,edmunds_triebel_1996} and the references therein for further examples).
\begin{corollary} \label{cor:sobolev_low_bound_enc}
Assume that $\Omega$ fulfills some regularity conditions.\footnote{Many results estimating the $\eps$-entropy are only formulated and proven for $C^\infty$-domains for simplicity of exposition. However, as has been described in \cite[Section 4.10.3]{triebel1978interpolation} and \cite[Section 3.5]{edmunds_triebel_1996}, these results remain valid for function spaces on more general domains including cubes.} Let $\varrho:\R\to \R $ be chosen such that $\mathcal{NN}_\varrho^{d}\subset \CalD$ (where $\CalD$ is a function space on $\Omega$ specified below).  Moreover, let $\mathcal{B}$ be an arbitrary coding scheme. Then, the following statements hold:
\begin{enumerate}[(i)]
    \item \textbf{Besov spaces:}  
    Let $s,t\in\R$ with $s<t$ as well as $p_1,p_2,q_1,q_2\in (0,\infty]$ such that $$t-s-d\max\left\{\left(\frac{1}{p_1}-\frac{1}{p_2} \right),0\right\}>0.$$ Moreover, let $\CalC= B^{t}_{p_1,q_1}(\Omega),$ and $\CalD=B^{s}_{p_2,q_2}(\Omega).$ Then, for some $C>0,$ we have
    \begin{align*}
        M^{\mathcal{B}}_\eps(\mathcal{C},\mathcal{D}) \geq C\eps^{-\frac{d}{t-s}} \Big/ \log_2\left(\frac{1}{\eps}\right), \qquad \text{ for all } \eps\in \epsin.
    \end{align*}

    \item \textbf{Sobolev Spaces:} \label{item:lowerbounds_sobolev} Let $s,t\in \N$ with $t>s$ and let $p\in (0,\infty]$.  
    Then, for $\CalC= W^{t,p}(\Omega)$ and for $\CalD=W^{s,p}(\Omega)$ there exists some $C>0$ with
    \begin{align*}
        M^{\mathcal{B}}_\eps(\mathcal{C},\mathcal{D}) \geq C\eps^{-\frac{d}{t-s}}\Big/ \log_2\left(\frac{1}{\epsilon} \right), \qquad \text{ for all } \eps\in \epsin.
    \end{align*}

\end{enumerate}
\end{corollary}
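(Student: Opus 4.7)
The plan is to deduce both bounds from Theorem~\ref{thm:lower_bound_enc} by establishing, in each of the two settings, a matching lower bound on the minimax code length of the form
\[
 L_\eps(\CalC,\CalD) \;\geq\; C_1\,\eps^{-d/(t-s)}
\]
for some $C_1=C_1(\CalC,\CalD)>0$ and all sufficiently small $\eps>0$. Once this is in hand, the corollary follows at once by invoking Theorem~\ref{thm:lower_bound_enc} with $\gamma = d/(t-s)$.

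The first step is to reduce the minimax code length to the classical Kolmogorov $\eps$-entropy $H_\eps(\CalC,\CalD):=\log_2 N_\eps(\CalC,\CalD)$, where $N_\eps$ denotes the minimal number of closed $\CalD$-balls of radius $\eps$ required to cover the unit ball of $\CalC$. Indeed, if $(E,D)\in \mathfrak{E}^\ell\times\mathfrak{D}^\ell$ realizes $\ell=L_\eps(\CalC,\CalD)$, then the at most $2^\ell$ images of $D$ are centers of an $\eps$-cover of the unit ball of $\CalC$ in the $\CalD$-norm, which gives $N_\eps(\CalC,\CalD)\leq 2^{L_\eps(\CalC,\CalD)}$ and hence $L_\eps(\CalC,\CalD)\geq H_\eps(\CalC,\CalD)$. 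It therefore suffices to establish the claimed lower bound for the $\eps$-entropy.

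For part~(i), the sharp asymptotics $H_\eps(B^{t}_{p_1,q_1}(\Omega),B^{s}_{p_2,q_2}(\Omega))\gtrsim \eps^{-d/(t-s)}$ under exactly the stated non-degeneracy condition $t-s-d\max\{(1/p_1-1/p_2),0\}>0$ are classical (going back to Birman--Solomyak) and are stated in the form needed here in~\cite[Chapter~3]{edmunds_triebel_1996} and~\cite[Section~4.10]{triebel1978interpolation}. For part~(ii), I would reduce to the Besov case by sandwiching the Sobolev unit ball between Besov unit balls via the standard Sobolev--Besov embeddings with appropriate fine indices (see~\cite[Chapter~2]{triebel1978interpolation}); since those embeddings preserve the differentiability parameters, the resulting entropy lower bound retains the rate $\eps^{-d/(t-s)}$. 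Alternatively, direct Sobolev entropy estimates from the same references can be cited.

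The only genuine technical point, rather than a serious obstacle, is that the cited entropy estimates are typically phrased on bounded $C^\infty$-domains, whereas here $\Omega$ is only assumed to be sufficiently regular (for instance, a cube or a Lipschitz domain). This is handled by the standard extension/restriction argument: extend functions from $\Omega$ to a surrounding $C^\infty$-domain via a bounded linear extension operator, apply the entropy estimate there, and restrict back, losing only a multiplicative constant in front of the rate. This is precisely the content of the discussion referenced in the footnote to the corollary; see~\cite[Section~4.10.3]{triebel1978interpolation} and~\cite[Section~3.5]{edmunds_triebel_1996}.
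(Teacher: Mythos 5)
Your proposal is correct, and for part~(i) it follows essentially the same route as the paper: reduce the hypothesis $L_\eps(\CalC,\CalD)\gtrsim\eps^{-\gamma}$ of Theorem~\ref{thm:lower_bound_enc} to the Kolmogorov $\eps$-entropy (which the paper does implicitly; your explicit observation $L_\eps\geq H_\eps$ via the covering interpretation of a decoder is the right one), and then invoke the Besov-entropy asymptotics from Edmunds--Triebel. For part~(ii) your route genuinely differs from the paper's. You propose sandwiching $W^{t,p}$ between Besov spaces with fine indices $q=1$ and $q=\infty$ (so that $B^t_{p,1}\hookrightarrow W^{t,p}$ and $W^{s,p}\hookrightarrow B^s_{p,\infty}$), which monotonically transfers the Besov entropy lower bound at the same rate $\eps^{-d/(t-s)}$ to the Sobolev pair. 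The paper instead cites Edmunds--Evans (\cite[Section 1.3]{Hardy}) and reads off the lower bound from the known asymptotics of the approximation numbers $a_k(\mathrm{id})$ of the Sobolev embedding, together with the standard comparison of approximation numbers and entropy numbers. Your sandwiching is slightly more elementary in that it reuses the Besov machinery from part~(i) rather than introducing approximation numbers, at the cost of needing to be a little careful about which Besov--Sobolev embeddings hold at the endpoints of the $p$-range; the paper's approach imports a result tailored to Sobolev embeddings and avoids that bookkeeping. Either way, the final step (applying Theorem~\ref{thm:lower_bound_enc} with $\gamma=d/(t-s)$) and the domain-transfer remark via extension/restriction match the paper.
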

\begin{proof}
(i) follows immediately from Theorem~\ref{thm:lower_bound_enc} in combination with Theorem \cite[Section 3.5]{edmunds_triebel_1996}.

(ii) 
 follows from Theorem~\ref{thm:lower_bound_enc} together with \cite[Section 1.3]{Hardy}, where we use the estimate on the approximation number $a_k(id)$ (cf. page 9) combined with the relation of $a_k(id)$ and the entropy.
\end{proof}

\section{Upper Bounds For General Activation Functions in Sobolev Spaces} \label{sec:main}
In this section, we show that for an arbitrary accuracy $\eps>0$, every function from the unit ball of the Sobolev space $W^{n,p}$ 
\[
\Fndp\coloneqq\{f\in\Wkp[n][p][\cube^d]: \norm{f}_{\Wkp[n][p][\cube^d]}\leq 1\}
\]
can be $\eps$-approximated in weaker Sobolev norms $W^{k,p}$ (with $n> k$) by neural networks with fairly general activation function. For this, we explicitly construct approximating neural networks with constant depth (i.e., independent\ of $\eps$) and give upper bounds for the number of nonzero, encodable weights (depending on~$\eps$), which in the light of the results of Section~\ref{sec:Lower} are almost optimal. The main idea is based on the common strategy (see e.g.~\cite{yarotsky2017error,schmidt2017nonparametric,guhring2019error,ohnELU}) of approximating $f$ by localized polynomials which in turn are approximated by neural networks.
Our work differs from these other works in three major aspects: 
\begin{enumerate}[\indent(a)]
    \item Our approximations include $W^{k,p}$ for $k\leq j$ (instead of maximally $W^{1,p}$) depending on the smoothness $j$ of the activation function.
    \item Constructing a PU by neural networks with general activation function is tricky (contrary to $\mathrm{ReLU}$ networks) and can, in general, only be done approximately (see Section~\ref{subsec:IngredientI} and Figure~\ref{fig:pou_approx_vs_relu}).
    \item Our polynomial approximations and approximate PUs have depth independent of $\eps$, which results in constant-depth approximations of $f$.
\end{enumerate}
We construct localizing bump functions that form an \emph{(approximate) partition of unity} in Section~\ref{subsec:IngredientI} and efficiently \emph{approximate polynomials} by neural networks in Section~\ref{subsec:IngredientII}. 
Afterwards, the statements of the main results as well as a detailed overview of their overall proof strategies are given in Section~\ref{subsec:MainRes}.

\subsection{Ingredient I: (Approximate) Partition of Unity }\label{subsec:IngredientI}

In~\cite{yarotsky2017error,guhring2019error} the ReLU activation function is used to construct continuous, piecewise linear bump functions with compact support that form a PU. However, this approach heavily relies on properties of the ReLU and is only suitable for approximations in Sobolev norms up to order one. For general activation functions, there is, to the best of our knowledge, no canonical way to build a PU by neural networks. As a remedy we introduce approximate partitions of unity which are compatible with all practically used activation functions. In detail, for a gridsize $1/N$ (with $N\in\N$), we divide the domain $(0,1)^d$ into $(N+1)^d$ equally large patches and construct, for each patch $\Omega_m$ for $m\in\MNd$, a bump function $\phi_{m}\in W^{j,\infty}$. Deviating from usually used bump functions, $\phi_{m}$ is in general not compactly supported on the corresponding patch and their sum only approximates $\mathbbm{1}_{\cube^d}$, i.e.,\ $\sum_{m} \phi_{m} \approx \mathbbm{1}_{\cube^d}$. Additionally, we introduce a scaling factor $s\geq1$, which regulates the closeness of the approximate PU to an exact PU. For $s\to \infty$, we have that $\norm{\phi^s_m}_{\Omega_m^{\mathrm{c}}}\to 0$ and $\sum_{m} \phi_{m}^s \to \mathbbm{1}_{\cube^d}$. The overall approximation rates in our main result now also depend on properties of the approximate PU. It will later turn out that the speed of the convergence is the decisive factor for which rates can be shown. We distinguish between exponential and polynomial speed.
Besides the smoothness $j$ and the convergence speed there is one more defining quantity $\tau$ which we call the \emph{order of the PU}. The order $\tau$ specifies at which derivative the scaling factor starts to show. In other words, all derivatives up to order $\tau$ absorb the effect of the scaling. In~Definition~\ref{def:partition_of_unity} we formally introduce the notion of an approximate PU. Additionally to approximate PUs with exponential and polynomial convergence properties we also include exact PUs in this definition since these include (leaky) ReLUs and powers thereof. 
 \begin{definition}\label{def:partition_of_unity}
     Let $d\in\N,j,\tau \in \N_0$. We say that the collection of families of functions $(\Psi^{(j,\tau,N,s)})_{N\in\N,s\in\R_{\geq 1}}$, where each $\pou\coloneqq \{\phi_m^s: m\in\{0,\ldots, N\}^d\}$ consists of $(N+1)^d$ functions $\phi_m^s:\R^d\to \R$, is an \emph{exponential} (respectively \emph{polynomial}, \emph{exact}) \emph{partition of unity of order $\tau$ and smoothness $j$}, or short \emph{exponential} (\emph{polynomial}, \emph{exact}) $(j,\tau)$\emph{-PU}, if the following conditions are met:
     
There exists some $D>0,~C=C(k,d)>0$ and $S>0$ such that for all $N\in\N, s\geq S, k\in\{0,\ldots,j\}$ the following properties hold:
 \begin{enumerate}[(i)]
		\item \label{item:pou_derivativeGeneralSigmoid}  $\norm{\phi_m^s}_{\Wkp[k][\infty][\R^d]}\leq C N^k\cdot s^{\max\{0,k-\tau\}}$ for every $\phi_m^s\in\pou$;
			\item \label{item:pou_suppGeneralSigmoid} for $\Omega^{c}_m = \big\{x\in \R^d:~\|x-\frac{m}{N}\|_{\infty}\geq \frac{1}{N}\big\}$, we have 
		\[
		\norm{\phi_m^s}_{\Wkp[k][\infty][\Omega_m^{c}]}\leq 
		\begin{cases} CN^k s^{\max\{0,k-\tau\}}e^{-Ds,}&\text{if \emph{exponential PU,}}\\
        CN^k s^{\max\{0,k-\tau\}}s^{-D,}&\text{if \emph{polynomial PU,}}\\
		0, &\text{if \emph{exact PU}},
		\end{cases}
		\]
		for every $\phi_m^s \in\pou$. 
		\item  \label{item:pou_sumGeneralSigmoid} We have \[
		\norm*{\mathbbm{1}_{\cube^d}-\sum_{m\in\MNd}\phi_m^s}_{\Wkp[k][\infty][\cube^d]}\leq \begin{cases} CN^k s^{\max\{0,k-\tau\}}e^{-Ds},&\text{if \emph{exponential PU,}}\\
        CN^k s^{\max\{0,k-\tau\}}s^{-D},&\text{if \emph{polynomial PU,}}\\
		0, &\text{if \emph{exact PU}}.
		\end{cases}
		\]
		
		\item \label{item:pou_networkGeneralSigmoid}There exists a function $\varrho:\R\to\R$ such that for each $\phi_m^s\in\Psi$ there is a neural network $\Phi_m^s$ with $d$-dimensional input and $d$-dimensional output, with two layers and $C$ nonzero weights, that satisfies
			\[
	\prod_{l=1}^d [\act{\Phi_m^s}]_l=\phi_m^s, \quad
				\]
			and $\norm{\act{\Phi_m^s}}_{\Wkp[k][\infty][\cube^d]}\leq CN^k\cdot s^{\max\{0,k-\tau\}}$. Furthermore, for the weights of $\Phi_m^s$ it holds that $\bweights{\Phi_m^s}\leq CsN$.
	\end{enumerate}
 \end{definition}

In the next Definition, we state conditions for an activation function $\rho$ to admit (in the sense of Definition~\ref{def:partition_of_unity}~\eqref{item:pou_networkGeneralSigmoid}) an exponential (polynomial, exact) PU of order $\tau$ with smoothness $j$ for $\tau\in\{0,1\}$ and afterwards we explicitly construct the corresponding PUs. For $\tau=0$ the activation functions are approximately piecewise constant outside of a neighborhood of zero (e.g.,\ sigmoidal) and for $\tau=1$ approximately piecewise affine-linear outside of a neighborhood of zero (e.g.,\ ELU). The speed they approach their asymptotes with (see (d) in the next definition) defines the convergence speed of the resulting PU. Furthermore, we require $\rho$ to be $j$-smooth. 
 \begin{definition}\label{def:Admissible}
     Let $j\in \N_0,\tau \in \{0,1\}.$ We say that a function $\varrho:\R\to\R$ is \emph{exponential (polynomial, exact) $(j,\tau)$-PU-admissible}, if 
     \begin{itemize}
        \item[(a)]\label{item:BoundedLipschitz} 
        $\varrho \text{ is }\begin{cases} 
                                    \text{bounded},\hfill &\text{ if } \tau=0, \\
                                    \text{Lipschitz~ continuous} , \hfill \quad &\text{ if } \tau=1;
                            \end{cases}$
        \item[(b)]\label{item:SmoothnessRho} There exists $R>0$ such that $\varrho\in C^{j}(\R\setminus [-R,R])$;
        \item[(c)]\label{item:SmoothnessK} $\varrho'\in W^{j-1,\infty}(\R)$, if $j\geq 1$ ;
         \item[(d)]\label{item:Decay} There exist $A=A(\varrho),B=B(\varrho)\in\R$ with $A<B,$ some $C=C(\varrho,j)>0$ and some $D=D(\varrho,j)>0$ such that 
                \begin{itemize}
                    \item[(d.1)]\label{item:DecayPos} $\left|B-\varrho^{(\tau)}(x) \right|\leq  Ce^{-Dx}$ ($C x^{-D}$ if \emph{polynomial}, $0$ if \emph{exact}) for all $x>R$;
                    \item[(d.2)]\label{item:DecayNeg} $\left|A-\varrho^{(\tau)}(x) \right|\leq Ce^{Dx}$ ($C' \pabs{x}^{-D}$ if \emph{polynomial}, $0$ if \emph{exact}) for all $x<-R$;
                    \item[(d.3)]\label{item:DecayHigh} $\left|\varrho^{(k)}(x) \right|\leq Ce^{-D|x|}$ ($C \pabs{x}^{-D}$ if \emph{polynomial}, $0$ if \emph{exact}) for all $x\in \R\setminus[-R,R]$ and all $k=\tau +1,\dots,j$.
                \end{itemize}
     \end{itemize}
 \end{definition}
 
\begin{remark}
To give the reader a better intuition for the above definition we mention the similarity to \emph{$\tau$-degree sigmoidal functions} (see~\cite{MHASKAR1992350}) defined as $\rho:\R\to\R$ with
\[
\lim_{x\to-\infty} \frac{\rho(x)}{x^\tau} = 0, \quad \lim_{x\to\infty} \frac{\rho(x)}{x^\tau} = 1.
\]
 Roughly speaking we require the same asymptotic behavior (with the exception that the asymptotes do not need to be $0,1$) and additionally that the asymptotes are approached with a certain speed.
\end{remark} 
 
In Table~\ref{tab:ActFunctions}, we list a large variety of commonly used activation functions and their corresponding PU properties. The proofs of these properties can be found in Appendix~\ref{sec:activation_admissibility}. 

In the next Definition, we give (depending on $\tau$) a recipe for the construction of a one-dimensional approximate bump from which multi-dimensional bumps are derived via a tensor approach. 
 To give the reader a better impression of the definition below and the role of the scaling factor, we present exponential, polynomial and exact bumps and resulting PUs for different activation functions and scaling $s$ in~Figure~\ref{fig:pou_approx_vs_relu}.
 
 \begin{definition}\label{def:BumpsGeneral}
 Let $j\in\N_0,\tau \in \{0,1\}$. Assume that $\varrho:\R\to\R$ is exponential, polynomial or exact $(j,\tau)$-PU-admissible. We define, for a scaling factor $s\geq 1,$ the one-dimensional bump functions  
 \begin{align*}\displaystyle
     \psi^s:\R\to\R, \quad\psi^s(x)\coloneqq \begin{cases} \frac{1}{B-A}\left(\rho(s(x+3/2))-\rho(s(x-3/2))\right), & \text{if } \tau=0, \\
     \frac{1}{s(B-A)}\left(\rho(s(x+2))-\rho(s(x+1))-\rho(s(x-1)+\rho(s(x-2))\right), & \text{if } \tau=1.\end{cases} 
 \end{align*}
 
 \noindent For  $N\in\N,~d\in\N$ and $m\in \{0,\dots,N\}^d$ we define multi-dimensional bumps $\phi_m^s:\R^d\to\R$ as a tensor product of scaled and shifted versions of $\psi^s$. Concretely, we set
\begin{equation*}
	\phi_m^s(x)\coloneqq \prod_{l=1}^d\psi^s\left(3N\left(x_l-\frac{m_l}{N}\right)\right).
\end{equation*}
Finally, for $N\in\N,s\geq 1$, the collection of bump functions is denoted by $\Psi^{(j,\tau,N,s)}(\rho)\coloneqq \{\phi_m^s: m\in\{0,\ldots,N\}^d\}$.
 \end{definition}
 In the next Lemma we show that the conditions from~Definition~\ref{def:Admissible} together with the construction in~Definition~\ref{def:BumpsGeneral} are indeed sufficient to generate an (approximate) PU. 
 \begin{lemma}\label{prop:partition_of_unity}
     Let $j\in \N_0,\tau \in \{0,1\}$ and a function $\varrho:\R\to\R$ be \emph{exponential (polynomial, exact) $(j,\tau)$-PU-admissible}. Then the collection of families of functions $(\Psi^{(j,\tau,N,s)}(\rho))_{N\in\N,s\in\R_{\geq 1}}$ defined in~Definition~\ref{def:BumpsGeneral} is an exponential (polynomial, exact) PU of order $\tau$ and smoothness $j$. 
 \end{lemma}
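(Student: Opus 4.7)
The plan is to reduce everything to one-dimensional statements about the bump $\psi^s$ via the tensor-product structure of Definition~\ref{def:BumpsGeneral}, then verify properties~\ref{item:pou_derivativeGeneralSigmoid}--\ref{item:pou_networkGeneralSigmoid} in turn. Property~\ref{item:pou_networkGeneralSigmoid} is essentially a direct consequence of the definition: $\psi^s$ is a fixed affine combination of two (for $\tau=0$) or four (for $\tau=1$) evaluations of $\rho$ on affine maps in a single input. Running $d$ such blocks in parallel on the coordinates of $x$ and applying the diagonal read-off yields a two-layer network $\Phi_m^s$ whose $l$-th output equals $\psi^s(3N(x_l-m_l/N))$; the product gives $\phi_m^s$. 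The number of nonzero weights depends only on $d$, and the largest weight magnitude is controlled by the inner factor $3sN$.

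The central computation is a \emph{telescoping identity} for $S(u):=\sum_{m=0}^N \psi^s(3(u-m))$. For $\tau=0$ the inner shifts of $\psi^s$ are $\pm 3/2$; writing $\psi^s(3(u-m))=\frac{1}{B-A}[b_{m-1}-b_m]$ with $b_m=\rho(3s(u-m-\tfrac12))$, re-indexing collapses the sum into
\[
S(u)=\frac{1}{B-A}\bigl[\rho(3s(u+\tfrac12))-\rho(3s(u-N-\tfrac12))\bigr].
\]
For $u\in[0,N]$ the two arguments have opposite sign and modulus at least $3s/2$, so conditions~(d.1)/(d.2) force the two values within the prescribed exponential, polynomial, or exact distance of $B$ and $A$ respectively, yielding $S(u)=1+\mathrm{err}(u)$ of the claimed type. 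For $\tau=1$ the coefficients $(+1,-1,-1,+1)$ at shifts $(+2,+1,-1,-2)$ satisfy both $\sum_i c_i=0$ and $\sum_i c_i\cdot\mathrm{shift}_i=0$, so the constant and affine asymptotics of $\rho$ at infinity cancel; telescoping collapses $S(u)$ into two boundary pairs of $\rho$-differences, each equal to $Bs$ (respectively $-As$) up to an error of the prescribed type via the fundamental theorem of calculus applied to $\rho'-B$ and $\rho'-A$ combined with (d.1)/(d.2) for the derivative.

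The remaining derivative bounds follow by direct differentiation. Property~\ref{item:pou_derivativeGeneralSigmoid} comes from $\partial_u^k \psi^s$, which costs a factor $s^k$ (resp.\ $s^{k-1}$ once the $1/s$ normalization of the $\tau=1$ case is cashed in) multiplied by $\rho^{(k)}$ evaluated at the shifted points, uniformly bounded by~(c); the $3N$-dilation adds the factor $N^k$. Property~\ref{item:pou_sumGeneralSigmoid} applies the same differentiation to the telescoped formula for $S$, now invoking~(d.3) for $k\geq\tau+1$ so that $\rho^{(k)}$ at arguments of size $\Theta(s)$ supplies the extra $e^{-Ds}$, $s^{-D}$, or zero factor; the passage from one to $d$ dimensions uses the tensor identity
\[
1-\sum_{m\in\MNd}\phi_m^s(x)=1-\prod_{l=1}^d S(Nx_l),
\]
expanded into a sum of products of $R_l:=S(Nx_l)-1$ terms and closed by Leibniz. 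For property~\ref{item:pou_suppGeneralSigmoid}, any $x\in\Omega_m^c$ forces $|3N(x_l-m_l/N)|\geq 3$ for some coordinate~$l$; then all shifted $\rho$-evaluations inside the $l$-th factor of $\phi_m^s$ lie outside $[-R,R]$ on the same side of zero, so (d.1)--(d.3) yield the asserted decay of that single factor, and Leibniz once more completes the $d$-dimensional bound.

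The principal technical obstacle is the $\tau=1$ case: obtaining the sharper $s^{k-\tau}$ scaling (rather than the naive $s^k$) relies on both moment conditions $\sum_i c_i=\sum_i c_i\cdot\mathrm{shift}_i=0$ built into the four-term definition of $\psi^s$, and on verifying that these cancellations survive both the telescoping sum and subsequent differentiation. The polynomial-decay case needs slightly more bookkeeping than the exponential one, because the error terms must stay summable after the multidimensional Leibniz expansion, but the structure of the argument is unchanged.
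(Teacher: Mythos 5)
Your proof follows essentially the same route as the paper's: reduce to one dimension via the tensor-product structure, exploit the telescoping identity for $\sum_m\psi^s(3(u-m))$, invoke the decay conditions (d.1)--(d.3) for the support and sum estimates, and use the mean-value/FTC device to convert four-term $\rho$-differences into two-term $\rho'$-differences in the $\tau=1$ case. Your explicit identification of the moment cancellations $\sum_i c_i=\sum_i c_i\cdot\mathrm{shift}_i=0$ for the $\tau=1$ bump, and the closed form $S(u)=\frac{1}{B-A}[\rho(3s(u+\tfrac12))-\rho(3s(u-N-\tfrac12))]$ for $\tau=0$, match the paper's computation precisely.

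There is one small gap in your treatment of Property~\ref{item:pou_derivativeGeneralSigmoid}. You claim the bound on $\partial^k\psi^s$ comes from ``$\rho^{(k)}$ evaluated at the shifted points, uniformly bounded by~(c),'' times $s^k$ (respectively $s^{k-1}$ once the $\tau=1$ normalization is cashed in). This works for $k\geq 1$, but not for $k=0$: condition~(c) only controls $\rho'$ through $\rho^{(j)}$, not $\rho$ itself. For $\tau=0$, $k=0$ you must invoke the boundedness of $\rho$ from~(a). For $\tau=1$, $k=0$ the situation is worse: an unbounded $\rho$ (ELU, softplus) makes the naive estimate $\|\psi^s\|_\infty\lesssim s^{-1}\|\rho\|_\infty$ useless, and the ``$s^{k-1}$'' scaling you assert would give $s^{-1}$, which is not actually achieved. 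The correct argument uses the Lipschitz continuity of $\rho$ from~(a): each of the two differences of $\rho$-values at arguments $s$ apart is bounded by $s\,\mathrm{Lip}(\rho)$, so after dividing by $s(B-A)$ one obtains the stated $s^0$ bound. With this case distinction inserted, the rest of your argument is sound.
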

 \begin{proof}
  The proof of this statement is the subject of Appendix \ref{app:Bumps}. We only give the proof for exponential PUs. The statement for the other two cases follows analogously.
   \end{proof} 
   
   We demonstrate in Appendix \ref{sec:activation_admissibility} the admissibility for many practically-used activation functions. In Table \ref{tab:ActFunctions} below we have included the types of PUs these activation functions induce. 

\begin{remark}
Definition~\ref{def:Admissible} can be generalized to higher $\tau\geq 2$, resulting in an increasing amount of terms in the definition of a bump. Since most  activation functions used in practice are of order $\tau\in\{0,1\}$ we did not introduce this concept for simplicity of exposition. An example of  $(\tau\geq 2)$-functions are \emph{$\tau$-order RePUs} (short for \emph{Rectified Power Unit}, see, e.g., \cite{RePURates}), given by $ \mathrm{ReLU}^\tau$. Due to its obvious connections to \emph{B-splines} of order $\tau+1$ (see for instance \cite[Chapter IX]{DeBoor}), and their ability to form an exact PU  (\cite[p.~96]{DeBoor}) as well as their smoothness properties, it is clear that the resulting system $\left(\Psi^{(\tau,\tau,N,s)}(\mathrm{ReLU}^\tau)\right)_{N\in\N,s\geq 1}$ forms an exact $(\tau,\tau)$-PU.
\end{remark}

\begin{figure}
\centering
\subcaptionbox{exponential PU implemented by sigmoid-neural networks with scaling $s=1$. }{\scalebox{0.8}{\includegraphics[width=0.4\linewidth]{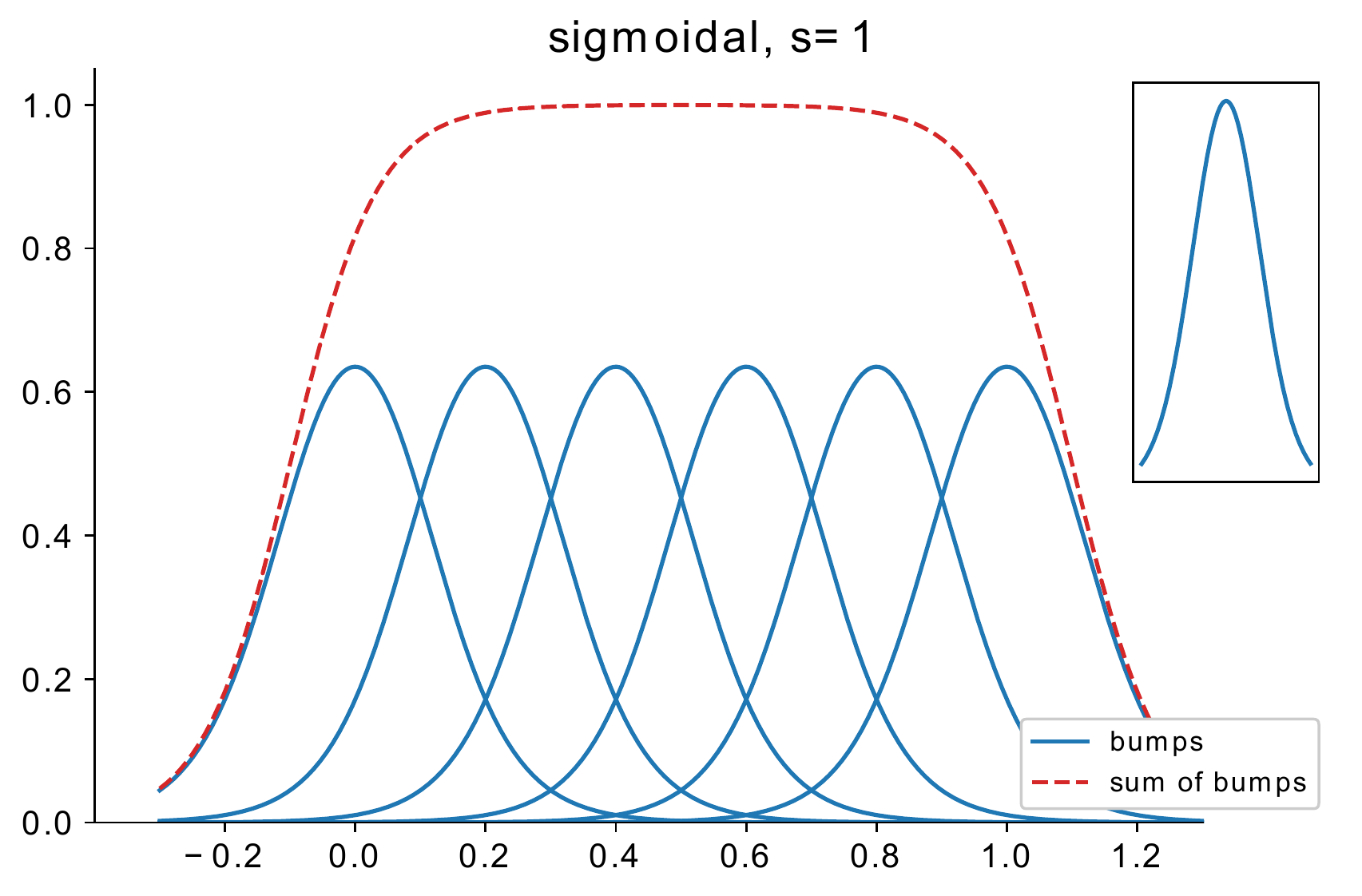}}} \hspace{3em}
\subcaptionbox{exponential PU implemented by sigmoid-neural networks with scaling $s=4$.}{\scalebox{0.8}{\includegraphics[width=0.4\linewidth]{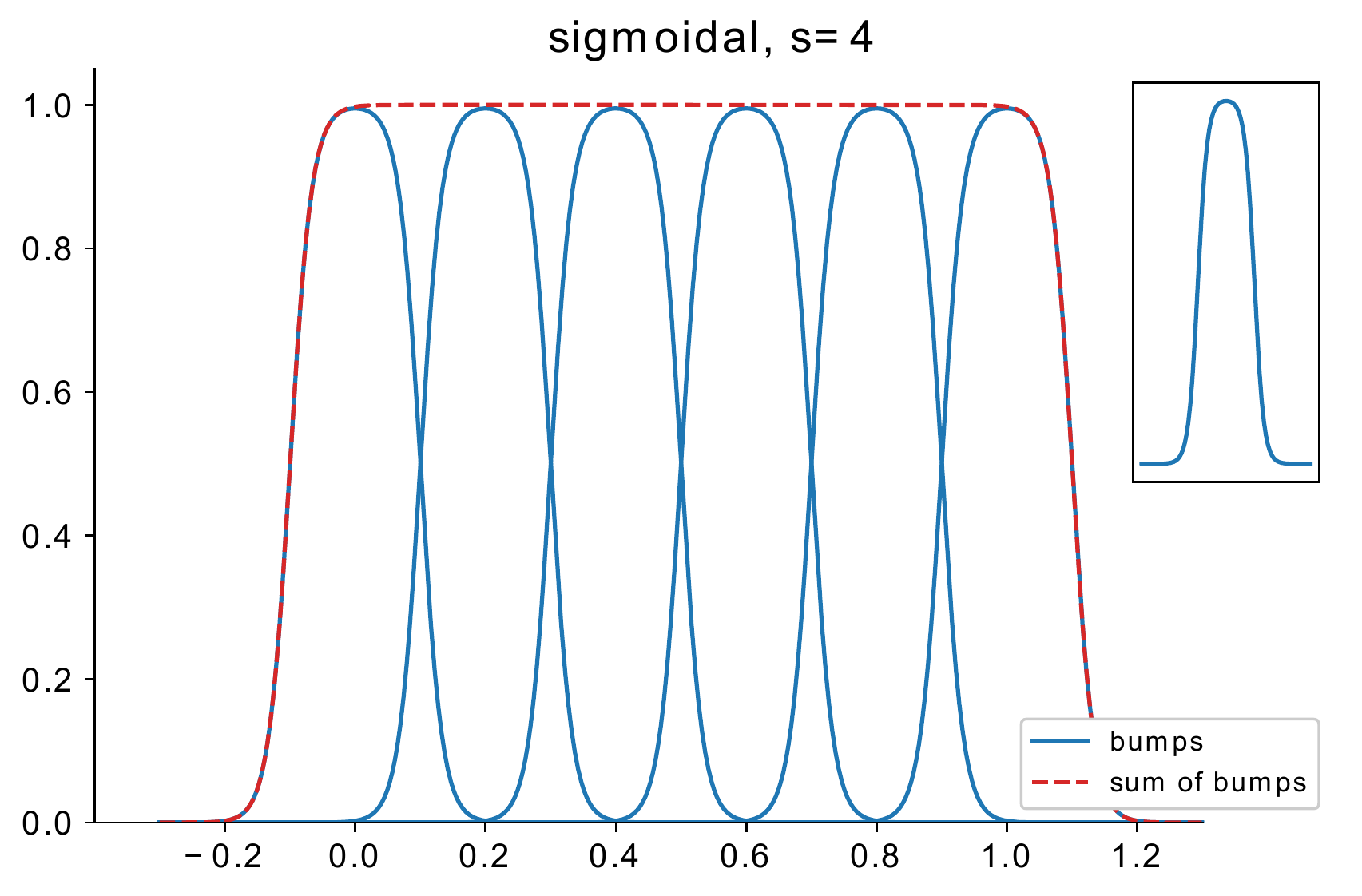}}}

\vspace{1.5em}

\subcaptionbox{exponential PU implemented by ELU-neural networks with scaling $s=1$. }{\scalebox{0.8}{\includegraphics[width=0.4\linewidth]{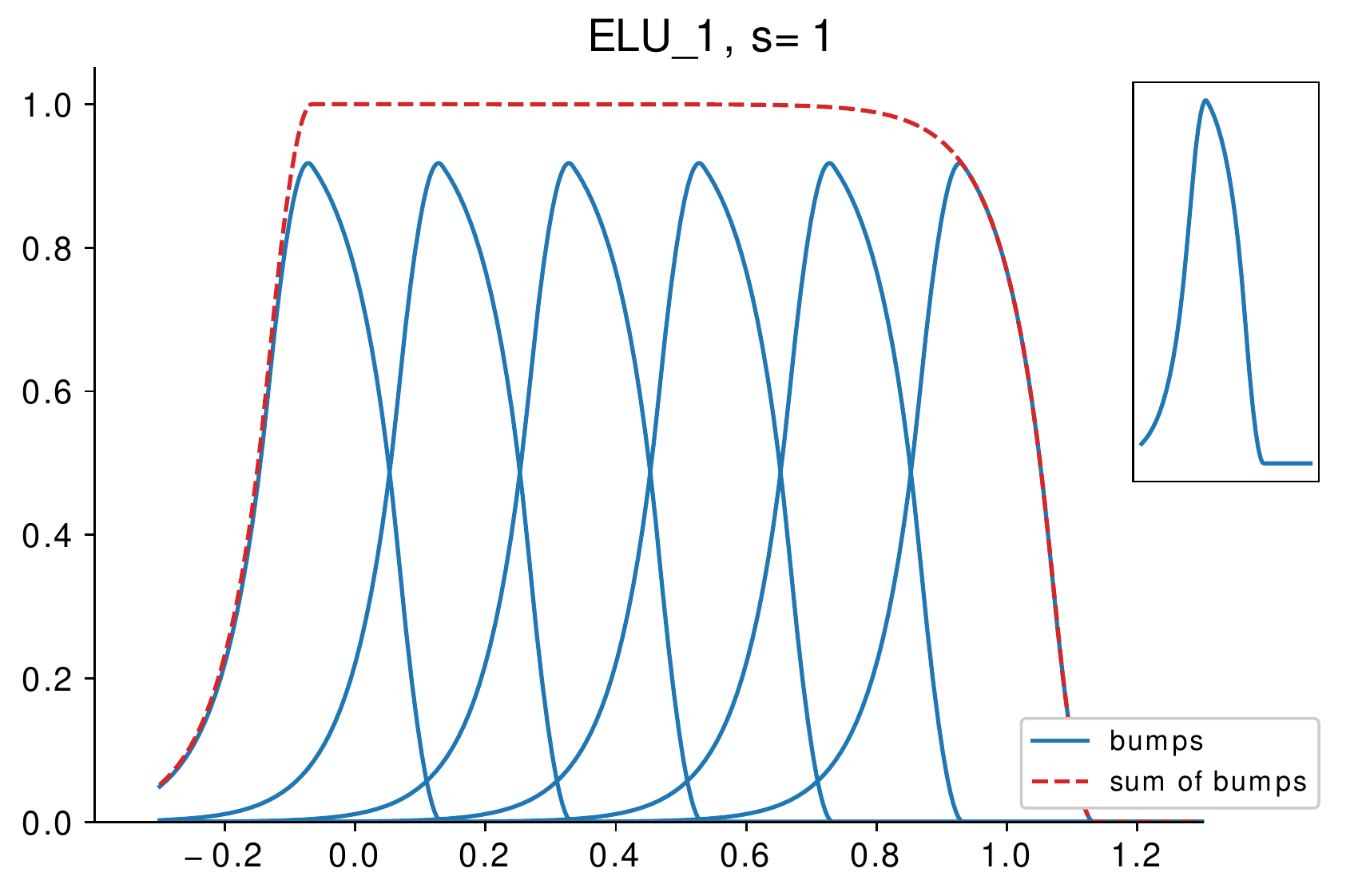}}}
\hspace{3em}
\subcaptionbox{exponential PU implemented by ELU-neural networks with scaling $s=4$. }{\scalebox{0.8}{\includegraphics[width=0.4\linewidth]{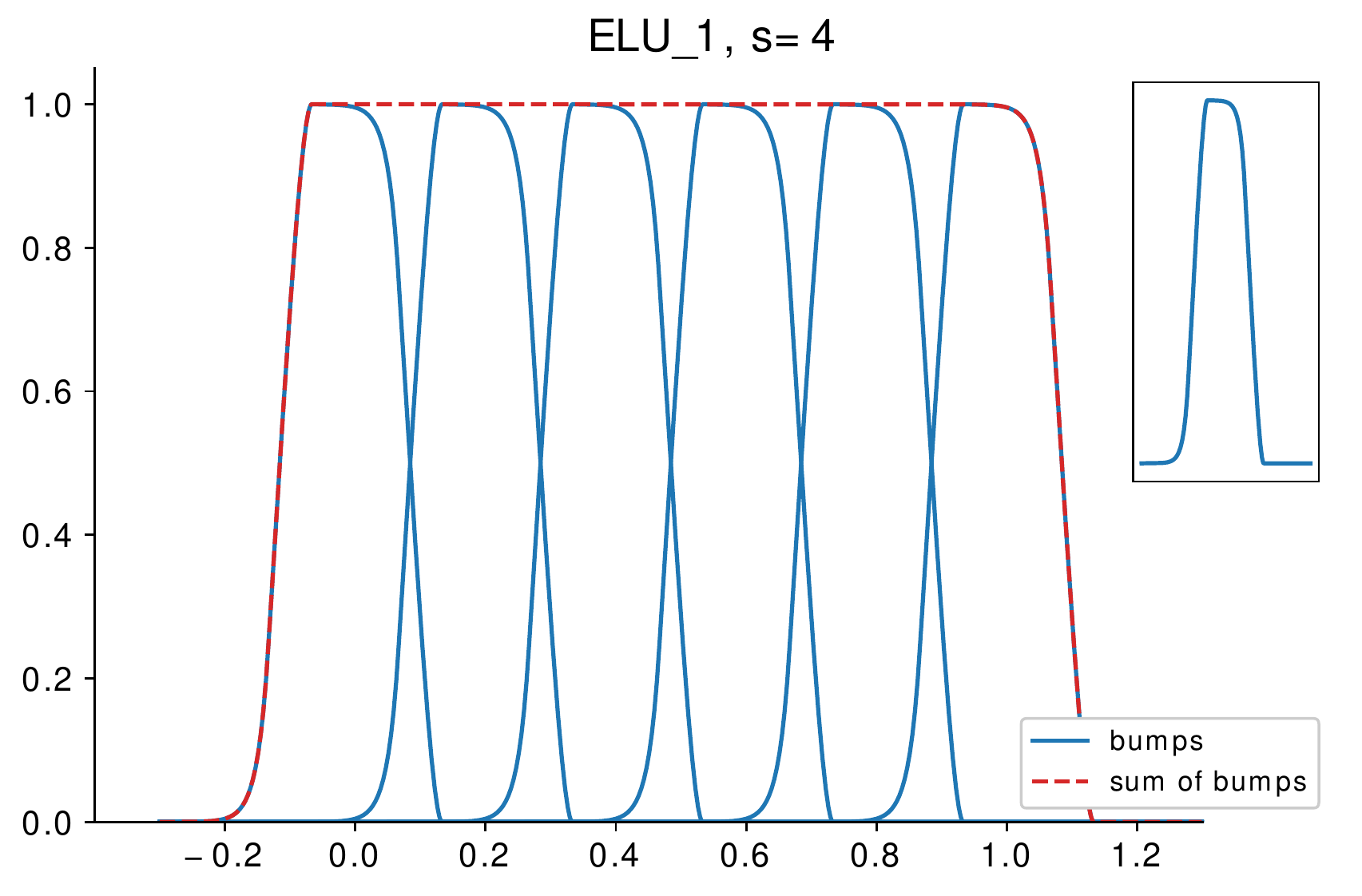}}}

\vspace{1.5em}

\subcaptionbox{polynomial PU implemented by softsign-neural networks with scaling $s=1$. }{\scalebox{0.8}{\includegraphics[width=0.4\linewidth]{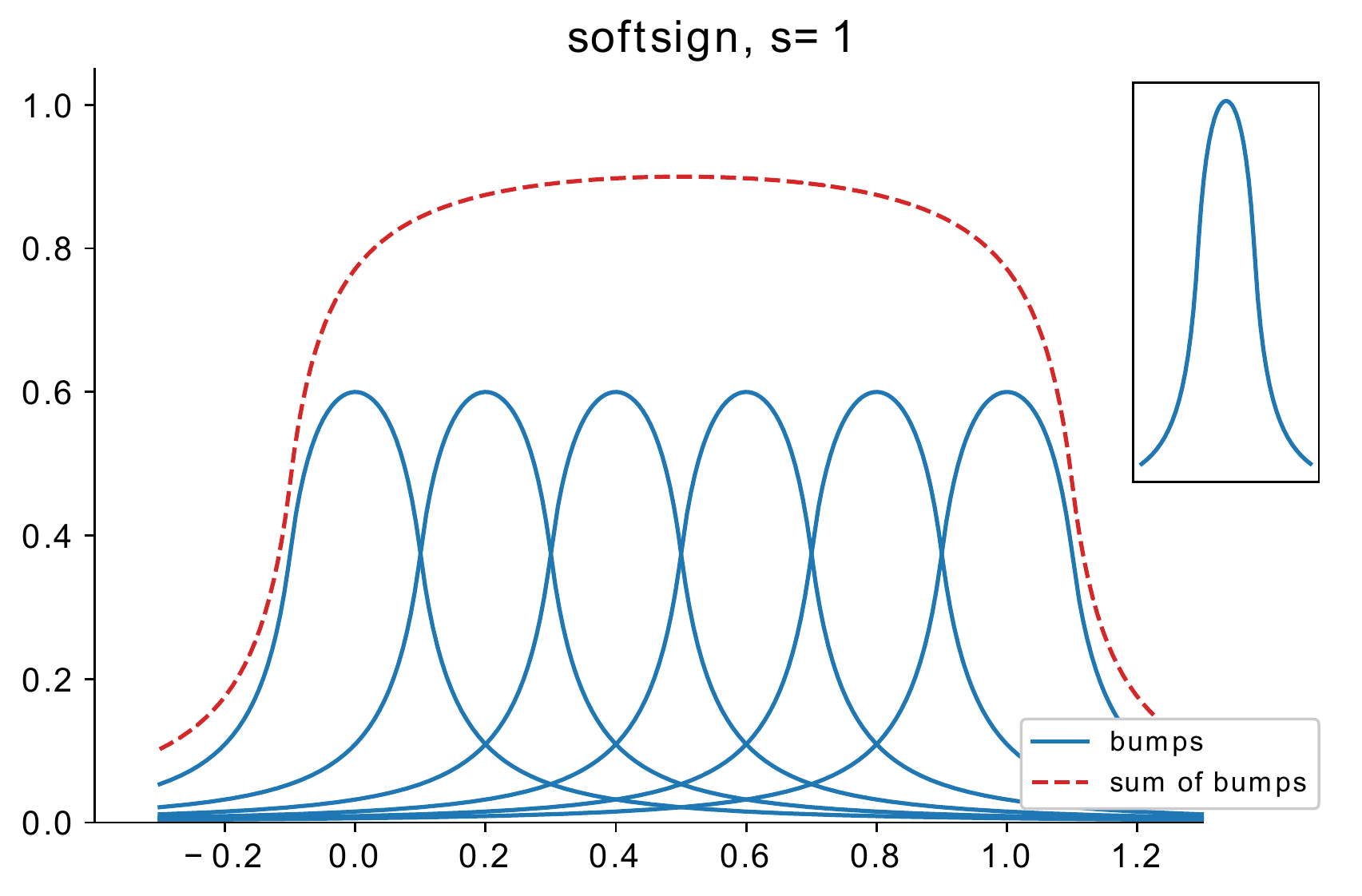}}}
\hspace{3em}
\subcaptionbox{polynomial PU implemented by softsign-neural networks with scaling $s=4$. }{\scalebox{0.8}{\includegraphics[width=0.4\linewidth]{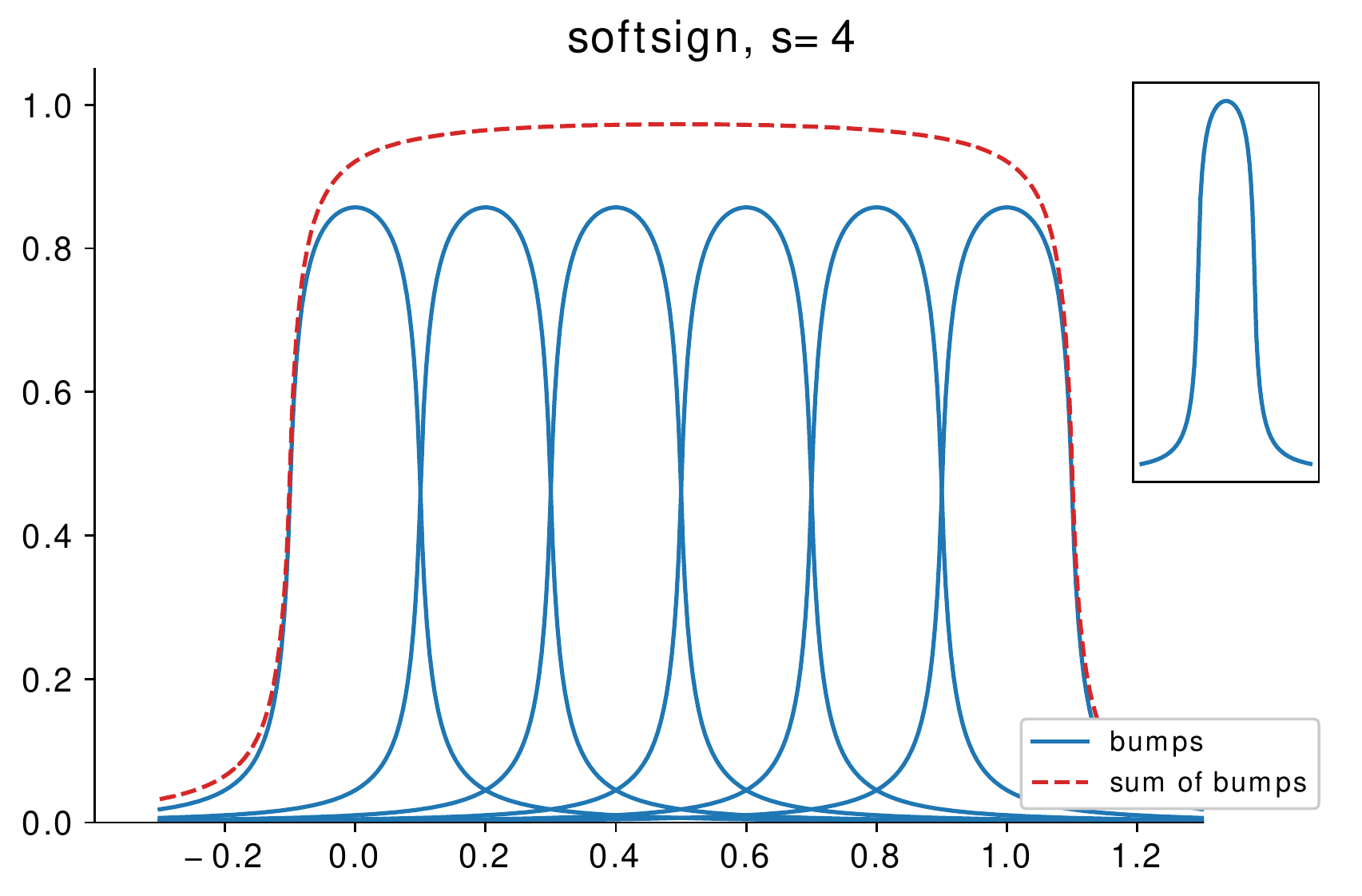}}}

\vspace{1.5em}

\subcaptionbox{Exact PU implemented by ReLU-neural networks used in~\cite{yarotsky2017error,guhring2019error}.}{\scalebox{0.8}{\includegraphics[width=0.4\linewidth]{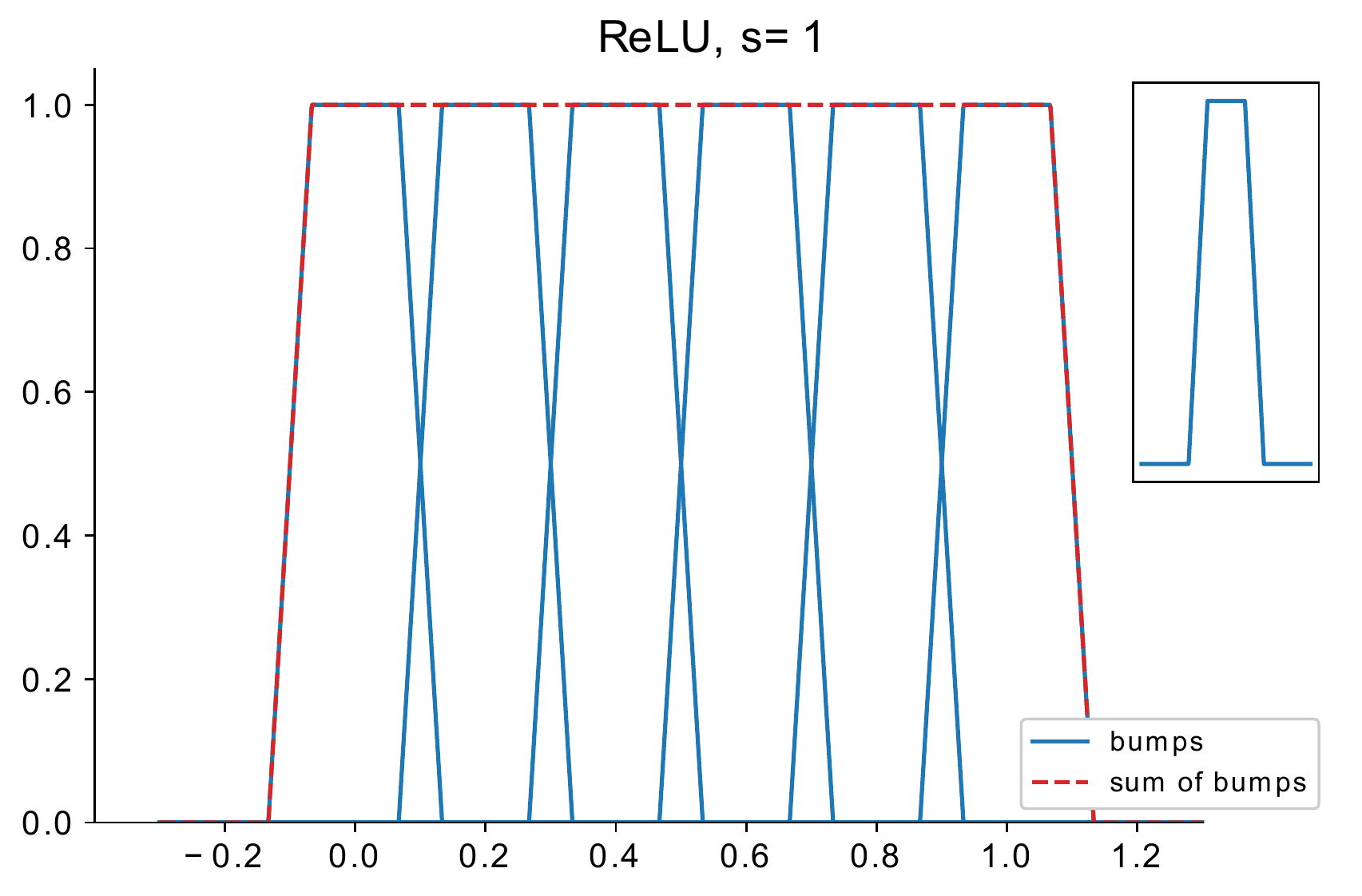}}}
\hspace{3em}
\subcaptionbox{Exact PU implemented by quadratic RePU-neural networks ($\tau=2$).}{\scalebox{0.8}{\includegraphics[width=0.4\linewidth]{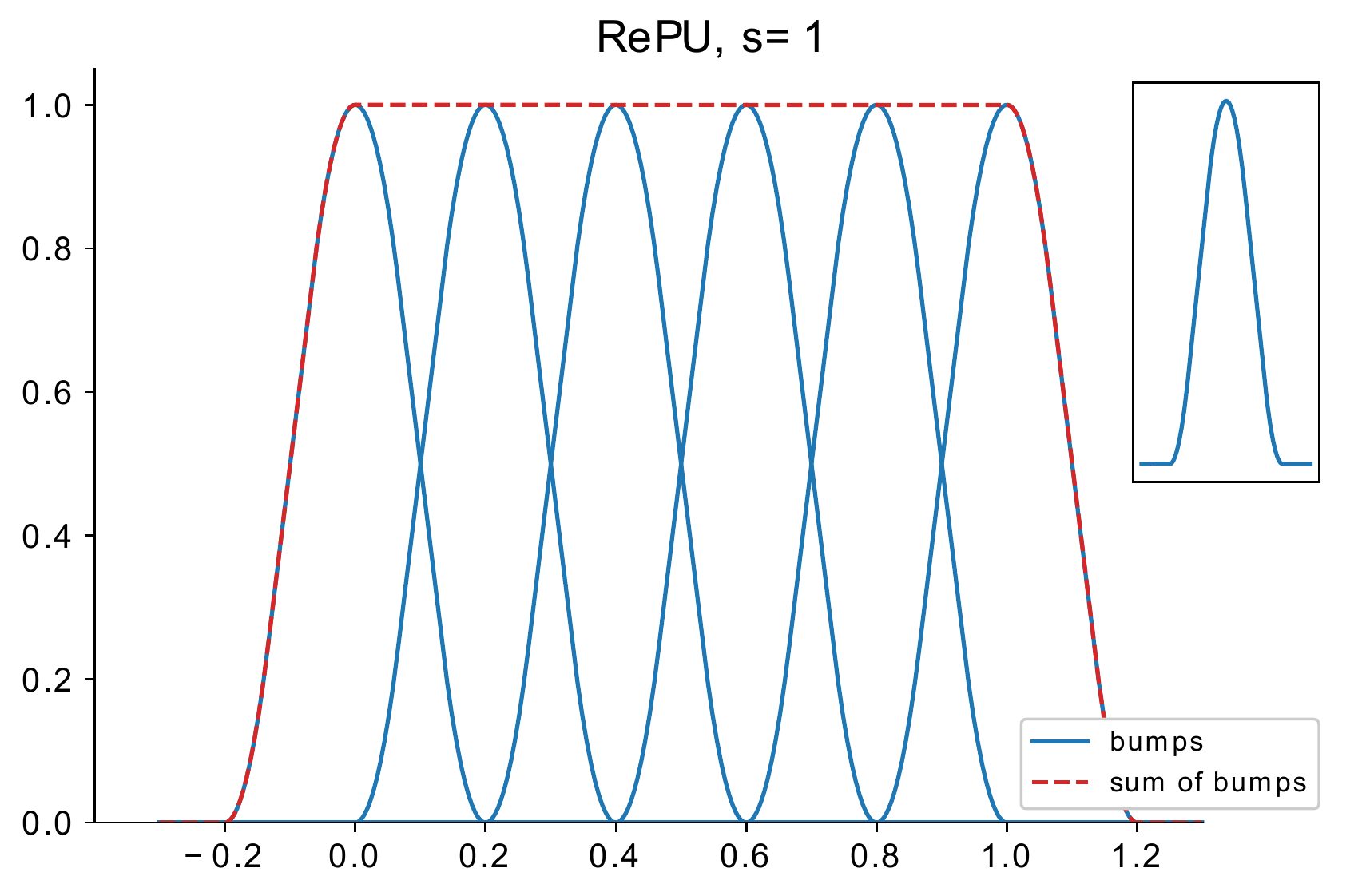}}}
\end{figure}
\addtocounter{figure}{-1}
\begin{figure} [t!]
\caption{(Previous page.) All displayed partitions of unity have $6$ bumps ($N=5$). The red curve shows the sum of the bump functions. A single bump function can be seen in the small window in the upper right part of each plot. The first two rows depict an exponential PU for $\tau=0$ (first row) and $\tau=1$ (second row). A polynomial PU of order $\tau=0$ can be seen in the third row. The impact of increasing the scaling factor $s$ can be seen in the second column. In the last row two exact PUs are shown. Here, the sum is constant $1$ on $(0,1)$ and scaling has no impact. }
\label{fig:pou_approx_vs_relu}
\end{figure}
 
 \subsection{Ingredient II: Approximation of Polynomials}\label{subsec:IngredientII}

Later on, we approximate our target function by localized polynomials $\sum \phi_{m}\cdot \mathrm{poly}_{m}$, where the $\phi_{m}$ are the localizing functions from Section \ref{subsec:IngredientI}\footnote{see Appendix \ref{app:ApproxLocal} for the precise statement and its proof}. Afterwards, we emulate these localized polynomials by neural networks\footnote{see Lemma~\ref{lemma:network_polynomial_approximation} in Appendix \ref{app:ApproxLocPol} for the final statement and its proof}.  For this, we need to approximate polynomials in an efficient way.
We start with approximating monomials $x\mapsto x^r$ on $\R$ by two-layered neural networks with activation functions that have a non-vanishing Taylor coefficient of order $r\in \N.$ The construction is mainly based on a generalization of a standard approach for approximating the function $x\mapsto x^2$ by using finite differences. This has been studied in~\cite{PowerOfDepth} and variations thereof have been considered, e.g., in \cite{ohnELU,schwab2018deep}.
\begin{proposition}\label{prop:MonApp}
    Let $\varrho:\R\to\R$ be a function. Assume, that for some
    $n\in\N$ there exists $x_0\in \R$ such that $\varrho$ is $n+1$ times continuously differentiable in some open neighborhood $U$ around $x_0$ and $\varrho^{(r)}(x_0)\neq 0$ for some $r\in  \{1,\dots,n\}.$   
    Then, for  every $\epsilon\in (0,1),$ and every $B>0$ there exists a constant $C=C(B,\varrho,r,n)>0$ as well as a neural network $\Phi^r_\eps$ with $\Realization(\Phi^r_\epsilon)|_{[-B,B]}\in C^{n+1}([-B,B])$ and the following properties:
    \begin{enumerate}
        \item[(i)] $\left\|\Realization(\Phi^r_\epsilon)-x^r\right\|_{C^{k}([-B,B])} \leq \epsilon$ for all $k=0,\dots,n$;
        \item[(ii)] $\pabs{\Realization(\Phi^r_\epsilon)}_{\Wkp[k][\infty][[-B,B]]}\leq C\frac{r!}{(r-k)!}B^{r-k}
        $ for $k=0,\dots,r$ and $\pabs{\Realization(\Phi^r_\epsilon)}_{\Wkp[k][\infty][[-B,B]]}\leq \eps
        $ for $k=r+1,\dots,n$;
        \item[(iii)] $L\left(\Phi_\eps^r\right)=2,$ as well as $M\left( \Phi_\eps^r\right) \leq 3(r+1);$
        \item[(iv)]  $\bweights*{\Phi^r_\eps}\leq C\epsilon^{-r}.$
    \end{enumerate}
\end{proposition}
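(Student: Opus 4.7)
\textbf{Proof proposal for Proposition~\ref{prop:MonApp}.}

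The plan is to use a classical finite-difference construction that isolates the $r$-th order term in the Taylor expansion of $\varrho$ around $x_0$. Concretely, for a small step size $h=h(\eps)>0$ to be chosen, I will set
\[
\Realization(\Phi_\eps^r)(x) \;\coloneqq\; \frac{1}{\varrho^{(r)}(x_0)\, h^r}\sum_{j=0}^{r} (-1)^{r-j}\binom{r}{j}\, \varrho(x_0 + j h x).
\]
The key algebraic identity is $\sum_{j=0}^r(-1)^{r-j}\binom{r}{j} j^{k}=0$ for $0\le k<r$ and $=r!$ for $k=r$ (the $r$-th forward difference of $j^{k}$ at zero). Substituting the Taylor expansion of $\varrho$ of order $n+1$ at $x_0$ into the sum and using this identity, all contributions corresponding to orders $k=0,\dots,r-1$ cancel, the order $k=r$ contribution reproduces $x^r$ exactly (with the factor $\varrho^{(r)}(x_0) h^r$ that is divided out), and the remaining terms are of order $h,h^2,\ldots$ together with a Taylor remainder of order $h^{n+1-r}$. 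So the construction approximates $x^r$, with the error and all its derivatives going to $0$ as $h\to 0$.

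The realization is evidently a two-layer network: the hidden layer contains the $r+1$ neurons $x\mapsto \varrho(jhx+x_0)$ for $j=0,\dots,r$, and the output layer is the linear combination with coefficients $(-1)^{r-j}\binom{r}{j}/(\varrho^{(r)}(x_0) h^r)$. Counting nonzero weights, the $j=0$ input weight vanishes, so there are at most $r$ nonzero input weights, at most $r+1$ first-layer biases (all equal to $x_0$), and $r+1$ output weights, for a total of at most $3r+2\le 3(r+1)$, giving (iii). The magnitude of the output-layer weights is $O(h^{-r})$, while all hidden-layer weights are bounded uniformly in $h$ (by $|x_0|+rh$); hence $\bweights{\Phi_\eps^r}\le C h^{-r}$, yielding (iv) once we fix $h\sim \eps$.

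For (i) and (ii), I differentiate the formula $k$ times in $x$ and again substitute the order-$(n+1)$ Taylor expansion of $\varrho^{(k)}$ at $x_0$. After reorganization the $k$-th derivative becomes
\[
\frac{1}{\varrho^{(r)}(x_0) h^r}\sum_{\ell=0}^{n-k} \frac{\varrho^{(k+\ell)}(x_0)}{\ell!}(hx)^{\ell} h^{k}\!\sum_{j=0}^{r}(-1)^{r-j}\binom{r}{j} j^{k+\ell} \;+\; \text{remainder}.
\]
The inner sum vanishes whenever $k+\ell<r$ and equals $r!$ when $k+\ell=r$, so for $k\le r$ the index $\ell=r-k$ produces exactly $\frac{r!}{(r-k)!}x^{r-k}$, which is the $k$-th derivative of $x^r$; all other $\ell$ contribute terms of size $O(B^{r-k} h)$ (with a constant depending on $B$, $\varrho$, $r$, $n$), and the Taylor remainder contributes $O(B^{n+1-k}h^{n+1-r})$. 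For $k>r$ there is no $\ell\ge 0$ with $k+\ell=r$, so every summand is $O(h^{k-r})$ or smaller. In both regimes the error is $O(h)$ on $[-B,B]$, which gives (i) and the qualitative bound in (ii) once we choose $h=c(B,\varrho,r,n)\cdot \eps$.

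The main obstacle is the uniform, simultaneous control of the error in \emph{all} derivative norms up to order $n$: one needs the cancellation identity $\sum_j (-1)^{r-j}\binom{r}{j} j^{k+\ell}=0$ for $k+\ell<r$ to wipe out the otherwise dominant terms, together with a careful bookkeeping of the Taylor remainder (using that $\varrho\in C^{n+1}$ near $x_0$, so $\varrho^{(k)}$ is Lipschitz for $k\le n$). Once this symbolic computation is set up, choosing $h$ proportional to $\eps$ simultaneously delivers (i), (ii), and (iv), while (iii) is read off directly from the architecture.
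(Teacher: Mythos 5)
Your proposal uses the same finite-difference construction as the paper: an $r$-th order difference quotient of $\varrho$ near $x_0$, the cancellation identity for $\sum_{j}(-1)^{j}\binom{r}{j}j^{k}$, a two-layer architecture with the $j=0$ input weight dropped, and a step size proportional to $\eps$, yielding exactly the paper's weight, depth, and accuracy bounds. The only cosmetic differences are sign conventions (your $h=1/\delta$ and $(-1)^{r-j}$ versus the paper's $(-1)^{j}$), and your handling of the $k$-th derivative, namely differentiating the explicit formula and then Taylor-expanding $\varrho^{(k)}$, is a slightly cleaner way of organizing the same computation.
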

\begin{proof}
 The proof of this result can be found in Appendix \ref{app:CalcMon}.
\end{proof}
 
Proposition~\ref{prop:MonApp} comes handy for two other usages besides monomial approximation:
 \begin{itemize}
     \item  We construct neural networks which implement an \emph{approximate multiplication} (see Corollary~\ref{cor:approximate_multiplication}) via the polarization identity
    \begin{align*}
        xy= \frac{1}{4} \left((x+y)^2-(x-y)^2 \right)\quad\text{for }x,y\in\R.
    \end{align*}    
This can by now be considered a standard approach in neural network approximation theory (originally used in~\cite{yarotsky2017error}). For this, the assumptions from Corollary~\ref{cor:approximate_multiplication} need to be fulfilled for $n=2$ and $r=2$, which holds true for all activation functions listed in~Table~\ref{tab:ActFunctions} \emph{except for the ReLU and the leaky ReLU}.
We use the approximate multiplication to obtain approximations of the multi-dimensional bumps $\phi_m$ from one-dimensional bumps which are in turn by construction neural networks. Furthermore, we can now deal with the multiplication of and $\phi_{m}$ with $\mathrm{poly}_{m}$ (see Corollary~\ref{cor:approximate_multiplication} in Appendix \ref{app:CalcMon} and~Lemma~\ref{lemma:network_multiplikation} in~Appendix~\ref{app:ApproxLocPol}).
     \item It is often useful to pass output from a layer to a non neighboring layer deeper in the network. Previous works have solved this issue for the ReLU activation function by constructing an identity network (e.g., \cite{petersen2017optimal,guhring2019error}). For general activation functions this is not possible. With help of Proposition~\ref{prop:MonApp} (for $n=1$ and $r=1$) an \emph{approximate identity neural network} can be constructed (see Proposition~\ref{cor:ApproxIdent}). It is clear that all activation functions listed in Table~\ref{tab:ActFunctions} fulfill the requirements.
 \end{itemize}

\subsection{Main Results Based on Ingredients I \& II}\label{subsec:MainRes}
The proof of the main statement of this section can be roughly divided into two steps:
In Proposition~\ref{prop:main}, the approximating neural networks are constructed with weights whose absolute values are bounded polynomially in $\eps^{-1}$. In Theorem \ref{thm:main}, the encodability of the weights is enforced. Before we state the actual results we give an overview of the proof of~Proposition~\ref{prop:main}, in which we explain the different approximation rates that can be obtained from different PUs. We hope that this excurse will make it easier for the reader to keep track of the different approximation rates presented in the results of this section.
\begin{overview*}
Let $\eps>0$. The proof of Proposition \ref{prop:main} is based on approximating a sum of $N=N(\eps)$ localized Taylor polynomials (which are close to $f$) by a neural network $\Phi_{P,\eps}$, such that we get
\begin{equation*}
  \norm[\bigg]{f-\act{\Phi_{P,\eps}}}_{\Wkp[k][\infty][\cube^d]}\leq\underbrace{\norm[\bigg]{f-\sum_m \phi_m^s \pp}_{\Wkp[k][\infty][\cube^d]}}_{\text{Step 1}}+\underbrace{\norm[\bigg]{\sum_m \phi_m^s \pp- \act{\Phi_{P,\eps}}}_{\Wkp[k][\infty][\cube^d]}}_{\text{Step 2}}.
\end{equation*}

\noindent \textbf{Step 1}: We start by depicting how our PUs are used together with localized Taylor polynomials. In the process the interplay between the convergence speed of the PUs and the approximation rates that can be obtained becomes clear. When approximating a function $f$ by localized Taylor polynomials $\pp$, where the localization is realized by a PU from Section~\ref{subsec:IngredientI}, we estimate the error on a fixed patch $\Omega_{\wtilde m}$ by 
\begin{align*}
\norm[\bigg]{f-\sum_m \phi_m^s \pp}_{\Wkp[k][\infty][\Omega_{\wtilde m}]}&\leq \norm[\bigg]{\Big(\mathbbm{1}_{\cube^d}-\sum_m \phi_m^s\Big)f}_{{\Wkp[k][\infty][\Omega_{\wtilde m}]}}+ \norm[\bigg]{\sum_{m}\phi_{m}^s(f-\pp)}_{{\Wkp[k][\infty][\Omega_{\wtilde m}]}}.
\end{align*}
The first term can be handled by Definition~\ref{def:partition_of_unity}~\eqref{item:pou_sumGeneralSigmoid} of the PU. Here, we only focus in detail on the second term. We have
\begin{align*}
\norm[\bigg]{\sum_{m}\phi_{m}^s(f-\pp)}_{\Wkp[k][\infty][\Omega_{\wtilde m}]}\lesssim C\underbrace{\sum_{\norm{m-\wtilde m}_\infty> 1}\norm{\phi_{m}^s}_{\Wkp[k][\infty][\Omega_{\wtilde m}]}}_{\mathrm{(a)}}+\underbrace{\sum_{\norm{m-\wtilde m}_\infty\leq 1}\norm{\phi_m^s(f-\mathrm{poly}_{\wtilde m})}_{\Wkp[k][\infty][\Omega_{\wtilde m}]}}_{\mathrm{(b)}}.
\end{align*}
 In the cases of exponential/polynomial PUs, we will make use of the decay property of Definition~\ref{def:partition_of_unity}~\eqref{item:pou_suppGeneralSigmoid}. In general we get
\begin{align*}
    \sum_{\norm{m-\wtilde m}_\infty> 1}\norm{\phi_{m}^s}_{\Wkp[k][\infty][\Omega_{\wtilde m}]}\lesssim N^d\cdot\begin{cases} CN^k s^{\max\{0,k-\tau\}}e^{-Ds},&\text{if \emph{exponential PU,}}\\
        CN^k s^{\max\{0,k-\tau\}}s^{-D},&\text{if \emph{polynomial PU,}}\\
		0, &\text{if \emph{exact PU}}.
		\end{cases}
\end{align*}
The closeness of the approximate bump to an exact bump is determined by the scaling factor $s$ which we now couple with $N$.
\begin{itemize}
    \item For the \emph{exponential case} we set $s\coloneqq N^\mu$ for arbitrarily small $\mu>0$ and can now use that the exponential term decays faster than any polynomial in $N$ grows. In particular, we have
\[
N^dN^k s^{\max\{0,k-\tau\}}e^{-Ds}=N^dN^k N^{\drate}e^{-DN^\mu}\leq N^{-(n-k)}
\] for $N$ large enough.
\item In the \emph{polynomial case} an arbitrarily small exponent is not sufficient to get rid of $N^d$, instead we must set $s\coloneqq N^{\frac{d +k+(n-k)}{D}}$ and get
\[
N^dN^k s^{\max\{0,k-\tau\}}s^{-D}=N^dN^k N^{-d-k-(n-k)}= N^{-(n-k)}\quad\text{for}\quad k\leq \tau.
\]
Here, we can only compensate for $N^d$ for $k\leq \tau$, since only the derivatives up to order $\tau$ absorb the effect of the scaling.
\item Finally, in case of an exact PU, term (a) is zero.
\end{itemize}

For term (b) we only consider $m=\wtilde m$. For $k\geq \tau+1$, we now pay the price for the scaling in the exponential case, since there is no exponential decay for the derivative of $\phi_{\wtilde m}^{s}$ on the patch $\Omega_{\wtilde m}.$ From Definition~\ref{def:partition_of_unity}~\eqref{item:pou_derivativeGeneralSigmoid} together with the Bramble-Hilbert Lemma~\ref{lemma:bramble_hilbert} we get the estimate
\[
\norm{\phi_m^s(f-\mathrm{poly}_{\wtilde m})}_{\Wkp[k][\infty][\Omega_{\wtilde m}]}\lesssim \begin{cases} N^{-(n-k-\drate)},&\text{if \emph{exponential PU,}}\\
        N^{-(n-k)},\quad & \text{for } k\leq \tau,\text{ if \emph{polynomial PU,}}\\
		N^{-(n-k)}, &\text{if \emph{exact PU}}.
		\end{cases}
\]
Combining the computations for (a) and (b) we get the total estimate in Step~1
\begin{align*}
\norm[\bigg]{\sum_{m}f-\phi_{m}^s\pp}_{\Wkp[k][\infty][\Omega_{\wtilde m}]}\lesssim\begin{cases} N^{-(n-k-\drate)},&\text{if \emph{exponential PU,}}\\
        N^{-(n-k)},\quad &\text{for } k\leq \tau, \text{ if \emph{polynomial PU,}}\\
		N^{-(n-k)}, &\text{if \emph{exact PU}}.
		\end{cases}
\end{align*}
By choosing $N\coloneqq \ceil{\eps^{-1/(n-k-\drate)}}$ in the exponential case and $N\coloneqq \ceil{\eps^{-1/(n-k)}}$ in the other two cases, we get that the term from Step~1 can be bounded by $\eps$.

\noindent \textbf{Step 2:} To construct the neural network we use the results from~Section~\ref{subsec:IngredientII} to 
\begin{enumerate}[(i)]
\item  approximate Taylor polynomials by neural networks; 
\item approximate the multi-dimensional PU since only the $d$ factors of its tensor structure can be exactly represented by a neural network (see Definition~\ref{def:partition_of_unity}~\eqref{item:pou_networkGeneralSigmoid}), their multiplication must be approximated;  
\item  approximate the multiplication of (i) with (ii) by neural networks $\Phi_{m,\widetilde \eps}$ with accuracy $\widetilde \eps$ (chosen below); 
\item  build the sum of all approximations of localized Taylor polynomials by neural networks.
\end{enumerate} The network $\Phi_{P,\eps}$ thus consists of the subnetworks from step (c). We get the estimate
\begin{align*}
\norm[\bigg]{\sum_m \phi_m^s \pp- \act{\Phi_{P,\eps}}}_{\Wkp[k][\infty][\cube^d]}\leq \sum_{m}\norm{\phi_m^s \pp- \Phi_{m,\widetilde \eps}}_{\Wkp[k][\infty][\cube^d]}\lesssim N^d \widetilde \eps.
\end{align*}
Consequently, we need to chose $\widetilde \eps\coloneqq \eps N^{-d}\approx\eps^{-d/(n-k-\drate)+1}$ (some terms are suppressed here for simplicity of exposition). We can only do this, since neither the number of weights of $\Phi_{m,\widetilde\eps}$ nor its number of layers depends on $\widetilde \eps$ (only the values of the weights do). In other words, each $\Phi_{m,\widetilde \eps}$ has a constant number of weights and layers. Combining $\sim N^d$ of such networks to get $\Phi_{P,\eps}$ yields a network with about $N^d=\eps^{-d/(n-k-\drate)}$ weights and constant number of layers for the exponential case (with obvious adaptations for the other two cases).

\textbf{Conclusion:} For activation functions $\rho$ with an exponential PU, we obtain optimal rates for Sobolev norms $k\leq\tau$ and almost optimal rates for $k\geq \tau+1$;  in the polynomial case, we get optimal approximation rates only in $W^{k,p}$-norms if $k\leq \tau$; in the case of an exact PU, we get optimal approximation rates for Sobolev norms up to order $j$ (smoothness of $\rho$).
\end{overview*}

We now give the statement of Proposition~\ref{prop:main}, which can be proven by using the ideas and concepts presented so far in this section. The detailed proofs are executed in Appendices \ref{app:Bumps}-\ref{app:PutTogether}, mostly for the case of \emph{exponential} $(j,\tau)$-PUs. The statements for the other two cases can be proven in an analogous way. 
\begin{proposition}\label{prop:main}
We make the following assumptions:
\begin{itemize}
    \item Let $d\in \N$, $j,\tau\in\N_0, k\in\{0,\ldots,j\}, n\in\N_{\geq k+1}$, $1\leq p\leq \infty$,  and $\mu>0$;
    \item let $\rho:\R\to\R$ such that $\left(\Psi^{(j,\tau,N,s)}(\varrho)\right)_{N\in\N,s\geq 1}$ is an exponential (polynomial, exact) $(j,\tau)$-PU;
    \item there exists $x_0\in\R$ such that $\rho$ is three times continuously differentiable in a neighborhood of $x_0$ and $\rho''(x_0)\neq 0$. 
\end{itemize}
	  Then, there exist constants $L, C,\theta,\widetilde\eps$ depending on $d,n,p,k, \mu$ with the following properties:
	
	For every $\epsilon \in (0,\widetilde \eps)$ and every $f\in\Fndp$, there is a neural network $\Phi_{\eps,f}$ with $d$-dimensional input and one-dimensional output, at most $L$ layers and at most \begin{align*}\begin{cases} C\eps^{-d/(n-k-\drate)}, \quad &\text{ if exponential PU }, \\
	C\eps^{-d/(n-k)}, \quad &\text{ for } k\leq \tau, \text{ if polynomial PU }, \\
	C\eps^{-d/(n-k)}, \quad &\text{ if exact PU }, 
	\end{cases}\end{align*}
	nonzero weights bounded in absolute value by $C\eps^{-\theta}$ such that 
	\[
		\norm{\act{\Phi_{\eps,f}} - f}_{\Wkp[k][p][\cube^d]}\leq \eps.
	\]
\end{proposition}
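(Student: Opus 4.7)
The plan is to follow the classical localize-and-approximate scheme outlined between the two ingredients: first approximate $f$ analytically by a sum of localized Taylor polynomials using the bump functions from the $(j,\tau)$-PU of Lemma~\ref{prop:partition_of_unity}, then emulate each localized polynomial by a small neural network assembled from monomial approximations (Proposition~\ref{prop:MonApp}) and approximate multiplications (Corollary~\ref{cor:approximate_multiplication}), finally summing by parallelization. The overall error decomposes as
\[
\|f-R_\varrho(\Phi_{\eps,f})\|_{W^{k,p}(\cube^d)}\leq \Big\|f-\sum_m \phi_m^s\, T_m\Big\|_{W^{k,p}(\cube^d)} + \Big\|\sum_m \phi_m^s\, T_m - R_\varrho(\Phi_{\eps,f})\Big\|_{W^{k,p}(\cube^d)},
\]
where $T_m$ is the Taylor polynomial of $f$ of order $n-1$ at the patch center $m/N$.

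\textbf{Step 1 (analytic error).} Restricting to a fixed patch $\Omega_{\widetilde m}$ I split
\[
\Big\|f-\sum_m\phi_m^s T_m\Big\|_{W^{k,p}(\Omega_{\widetilde m})}\le \Big\|\big(\mathbbm 1_{\cube^d}-\textstyle\sum_m \phi_m^s\big)f\Big\|_{W^{k,p}(\Omega_{\widetilde m})}+ \Big\|\sum_m \phi_m^s(f-T_m)\Big\|_{W^{k,p}(\Omega_{\widetilde m})}.
\]
The first summand is controlled by Definition~\ref{def:partition_of_unity}~(iii) and Sobolev embedding. For the second I split the sum over $m$ into neighbors ($\|m-\widetilde m\|_\infty\leq 1$) and non-neighbors: the non-neighbor contribution is killed by the off-patch decay~(ii) of the PU, while the neighbor contribution is bounded by the Bramble–Hilbert lemma combined with the derivative bound~(i), yielding $C N^{-(n-k)} s^{\max\{0,k-\tau\}}$ per patch. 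Summing over $(N+1)^d$ patches and picking $s$ coupled to $N$ as in the overview — $s=N^{\mu}$ in the exponential case, $s=N^{(d+n)/D}$ in the polynomial case, $s=1$ in the exact case — balances the two sources of error and determines the choice of $N$ (namely $N\asymp \eps^{-1/(n-k-\mu\max\{0,k-\tau\})}$ in the exponential case and $N\asymp \eps^{-1/(n-k)}$ in the other two, with the restriction $k\leq \tau$ in the polynomial case).

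\textbf{Step 2 (network emulation).} Each Taylor polynomial $T_{m}$ is a linear combination of $O_{n,d}(1)$ monomials in the shifted coordinates $x-m/N$. Proposition~\ref{prop:MonApp} (applied with $r\leq n-1$, which is permissible because $\varrho''(x_0)\neq 0$ gives us the $r=2$ base case and higher powers are obtained via iterated approximate multiplications based on the polarization identity) produces a constant-size, $\widetilde\eps$-accurate network for each monomial in $C^k$-norm; the Taylor coefficients $\partial^\alpha f(m/N)/\alpha!$ are uniformly bounded by $\|f\|_{W^{n,\infty}}\leq C$, so they enter as bounded scalar weights. Meanwhile $\phi_m^s$ is by Definition~\ref{def:partition_of_unity}~(iv) exactly the product of $d$ coordinates of a fixed two-layer network; I combine them into a single scalar by $d-1$ approximate multiplications and then multiply with the polynomial subnetwork by one further approximate multiplication, all to total accuracy $\widetilde\eps$ in $W^{k,\infty}$. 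Approximate identity networks (from Proposition~\ref{cor:ApproxIdent}) are used to align layers. Finally, summing these $(N+1)^d$ constant-size subnetworks in parallel (standard calculus from Appendix~\ref{app:NNCalc}) gives $\Phi_{\eps,f}$ with constant depth and $O(N^d)$ nonzero weights, i.e.\ the claimed counts. Choosing $\widetilde\eps\asymp \eps\, N^{-d}$ closes the Step~2 error, and Proposition~\ref{prop:MonApp}(iv) then yields weights of absolute value $O(\widetilde\eps^{-(n-1)})=O(\eps^{-\theta})$ for some $\theta=\theta(d,n,k,\mu)$.

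\textbf{Main obstacle.} The delicate bookkeeping is the coupling of the scaling factor $s$ with the grid parameter $N$, since these two quantities play against each other: large $s$ kills the off-patch tails of $\phi_m^s$ and makes $\sum_m \phi_m^s \approx \mathbbm 1$, but inflates derivatives of order greater than $\tau$ by a factor $s^{k-\tau}$ on the home patch. In the exponential case the exponential margin in~(ii) and~(iii) allows $s$ to grow as an arbitrarily small polynomial in $N$, at the cost of a $\mu$-loss in the exponent whenever $k>\tau$; in the polynomial case no such margin exists and we must accept the restriction $k\leq \tau$. A secondary technical point will be that because the approximate multiplication and monomial subnetworks have weights blowing up polynomially in $\widetilde\eps^{-1}$, I must keep track of these bounds through the tensor construction of $\phi_m^s$ and the final product with $T_m$, so as to certify the claimed polynomial weight bound $C\eps^{-\theta}$ that Theorem~\ref{thm:main} will later upgrade to encodable weights.
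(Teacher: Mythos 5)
Your overall outline mirrors the paper's: localize with the approximate PU, approximate by polynomials on each patch, emulate by neural networks via iterated approximate multiplication, and couple the scaling $s$ with the grid parameter $N$ case by case. The genuinely problematic step is your choice of $T_m$ as the \emph{classical Taylor polynomial of $f$ at the patch center $m/N$}, with coefficients $\partial^\alpha f(m/N)/\alpha!$ asserted to be bounded by $\|f\|_{W^{n,\infty}}$. The proposition is stated for all $1\le p\le\infty$, and $f$ lies in the unit ball of $W^{n,p}$, not of $W^{n,\infty}$. For $p<\infty$ the weak derivatives $\partial^\alpha f$ with $|\alpha|=n-1$ are merely $L^p$-functions, so they need not have well-defined pointwise values at $m/N$, and no coefficient bound independent of $N$ is available from $\|f\|_{W^{n,p}}\le 1$. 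The paper avoids this exactly by using the \emph{averaged} Taylor polynomials of the Bramble--Hilbert Lemma~\ref{lemma:bramble_hilbert}, which are well-defined for $f\in W^{n,p}$, deliver the local error estimate in $W^{k,p}$, and have coefficients bounded by $|c_{f,m,\alpha}|\le C N^{d/p}\|\widetilde f\|_{W^{n,p}(\Omega_{m,N})}$ (Remark~\ref{remark:polynomial_coefficients}). This $N^{d/p}$ growth propagates through the network bookkeeping and forces the sharper accuracy $\widetilde\eps = \eps N^{-(d/p+d+k+\mu\max\{0,k-\tau\})}$ in Lemma~\ref{lemma:network_polynomial_approximation}; your $\widetilde\eps\asymp\eps N^{-d}$ absorbs only the $N^d$ patches and neither the coefficient growth nor the $N^{k+\mu\max\{0,k-\tau\}}$ derivative growth of $\phi_m^s$, so Step 2 does not close as stated.

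Two further technical points are missing and are needed for $p<\infty$: an extension operator (Remark~\ref{remark:extension_operator}) so that Bramble--Hilbert may be applied on the balls $\Omega_{m,N}$ that exit $(0,1)^d$ near the boundary, and the finite-overlap patch decomposition of Equation~\eqref{eq:2dtrick} to control the $\ell^p$-sum $\sum_{\wtilde m}\|\cdot\|_{W^{k,p}(\Omega_{\wtilde m,N})}^p$ instead of the crude per-patch-maximum times $(N+1)^d$ that you implicitly use. A smaller inaccuracy: Proposition~\ref{prop:MonApp} with the hypothesis $\varrho''(x_0)\neq 0$ only yields the monomial $x^2$ directly; higher-degree monomials $x^\alpha$ are built, as in Lemma~\ref{lemma:network_polynomial_approximation}, from $|\alpha|$ approximate identities (Corollary~\ref{cor:ApproxIdent}, which needs $\varrho'(x_1)\neq 0$ at some nearby $x_1$, guaranteed because $\varrho''(x_0)\neq 0$ rules out $\varrho'$ being locally constant) chained through the iterated approximate multiplication of Lemma~\ref{lemma:network_multiplikation}, not by invoking Proposition~\ref{prop:MonApp} with $r\le n-1$. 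Apart from these points your treatment of the PU/scaling trade-off, the constant-depth parallelization, and the polynomial weight bound $C\eps^{-\theta}$ is aligned with the paper.
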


The main theorem now states that Proposition~\ref{prop:main} also holds with encodable weights, i.e.\ for each $\eps>0$,  every element of the set of weights $W_\eps = \bigcup_{f}{W_{\eps,f}}$ (where $W_{\eps,f}$ denotes the weights of $\Phi_{\eps,f}$) can be uniquely encoded by $\ceil{C\log_2(1/\eps)}$ bits. To state this in a formal way, we use the notation introduced in Equation~\eqref{eq:encod_notation}. 
\begin{theorem}\label{thm:main}
We make the following assumptions:
\begin{itemize}
    \item Let $d\in \N$, $j,\tau\in\N_0, k\in\{0,\ldots,j\}, n\in\N_{\geq k+1}$, $1\leq p\leq \infty$,  and $\mu>0$;
    \item let $\rho:\R\to\R$ such that $\left(\Psi^{(j,\tau,N,s)}(\varrho)\right)_{N\in\N,s\geq 1}$ is an exponential (polynomial, exact) $(j,\tau)$-PU;
    \item there exists $x_0\in\R$ such that $\rho$ is three times continuously differentiable in a neighborhood of $x_0$ and $\rho''(x_0)\neq 0$. 
\end{itemize}
 Then, there exist constants $L,C$ and $\widetilde\eps$, and a coding scheme $\mathcal{B}=(B_\ell)_{\ell\in\N}$ depending on $d,n,p,k, \mu$ with the following properties: 
	
	For every $\epsilon \in (0,\widetilde \eps)$ and every $f\in\Fndp$, there is a neural network $\Phi_{\eps,f}\in \mathcal{NN}^{\mathcal{B}}_{M_\eps,\ceil{C\log_2(1/\eps)},d}$ with $d$-dimensional input, one-dimensional output, at most $L$ layers and at most 
 \begin{align*}	M_\eps=\begin{cases} C\eps^{-d/(n-k-\drate)}, \quad &\text{ if exponential PU }, \\
	C\eps^{-d/(n-k)}, \quad &\text{ for } k\leq \tau, \text{ if polynomial PU }, \\
	C\eps^{-d/(n-k)}, \quad &\text{ if exact PU }, 
	\end{cases}\end{align*}
nonzero weights, such that 
	\[
		\norm{\act{\Phi_{\eps,f}} - f}_{\Wkp[k][p][\cube^d]}\leq \eps.
	\]
\end{theorem}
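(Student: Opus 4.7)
The plan is to quantize the weights of the network provided by Proposition~\ref{prop:main} to lie in a finite, encodable set, while controlling the error introduced in the $\Wkp[k][p][\cube^d]$-norm. First, I apply Proposition~\ref{prop:main} with tolerance $\eps/2$: for each $f\in \Fndp$ this yields a network $\Phi$ of constant depth $L$, at most $M_\eps$ nonzero weights (with $M_\eps$ as stated in the theorem), and weights satisfying $\bweights{\Phi}\leq C\eps^{-\theta}$ for some constant $\theta>0$, together with the bound $\norm{\act{\Phi}-f}_{\Wkp[k][p][\cube^d]}\leq \eps/2$.

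For the coding scheme $\mathcal{B}=(B_\ell)_{\ell\in\N}$ I would let the range of $B_K$ be a uniform grid of spacing $h_K \coloneqq 2\eps^{-\theta} \cdot 2^{-K}$ on $[-\eps^{-\theta},\eps^{-\theta}]$, of cardinality at most $2^K$. Setting $K \coloneqq \ceil{C_0 \log_2(1/\eps)}$ for a constant $C_0$ to be fixed below, and rounding every nonzero weight of $\Phi$ to its nearest grid point, produces a network $\widetilde\Phi \in \mathcal{NN}^{\mathcal{B}}_{M_\eps,K,d}$ with the same topology as $\Phi$, weight perturbations bounded by $h_K \leq C\eps^{C_0-\theta}$, and $\bweights{\widetilde\Phi}\leq C\eps^{-\theta}$.

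The core technical step is a sensitivity estimate of the form $\norm{\act{\Phi}-\act{\widetilde\Phi}}_{\Wkp[k][p][\cube^d]} \leq C\eps^{-\beta}\cdot h_K$ for some constant $\beta>0$ depending only on $d,n,p,k,L,\theta$. I would establish this by a telescoping argument, perturbing the weights one at a time and bounding each single-weight perturbation via the chain rule. The realization and its input-derivatives up to order $k$ can be written as sums of products involving derivatives $\rho^{(i)}$ with $i \leq k \leq j$ (uniformly bounded on the activation range determined by $\bweights{\Phi}\leq C\eps^{-\theta}$) and weight matrices. Since the depth $L$ is constant and the number of nonzero weights $M_\eps$ is polynomial in $\eps^{-1}$, each such product is polynomially bounded in $\eps^{-1}$, yielding the stated estimate after summing the $M_\eps$ single-weight replacements.

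Choosing $C_0$ large enough so that $C\eps^{C_0-\theta-\beta} \leq \eps/2$ for every $\eps<\widetilde\eps$, the triangle inequality yields $\norm{\act{\widetilde\Phi}-f}_{\Wkp[k][p][\cube^d]} \leq \eps$, so $\Phi_{\eps,f} \coloneqq \widetilde\Phi$ is the required network. The main obstacle I anticipate is the Sobolev-norm sensitivity estimate itself: controlling the $\Wkp[k][p][\cube^d]$-difference between $\act{\Phi}$ and $\act{\widetilde\Phi}$ (as opposed to the $L^p$-difference) requires tracking derivatives up to order $k$ of the realization with respect to the weights, which combinatorially produces many terms involving $\rho$-derivatives of all orders up to $k$; verifying that these remain polynomially bounded in $\eps^{-1}$ is precisely where the admissibility assumptions on $\rho$ from Definition~\ref{def:Admissible} (in particular $\rho' \in W^{j-1,\infty}(\R)$ together with the asymptotic control) become indispensable.
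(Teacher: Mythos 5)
Your plan diverges from the paper's proof in a way that creates a genuine gap. You propose to quantize \emph{all} weights of the network $\Phi$ from Proposition~\ref{prop:main} onto a grid and then establish a $W^{k,p}$-sensitivity estimate of the form $\norm{\act{\Phi}-\act{\widetilde\Phi}}_{\Wkp[k][p][\cube^d]}\leq C\eps^{-\beta}h_K$ by telescoping over single-weight perturbations. The problem is that for an \emph{inner}-layer weight $w$ entering as $\rho(w\,x_i+\cdots)$, the $k$-th $x$-derivative of the difference $\rho(w\,x_i+\cdots)-\rho(\widetilde w\,x_i+\cdots)$ contains a term of the form $\rho^{(k)}(a)-\rho^{(k)}(a+\delta)$, and to make this proportional to the perturbation $\delta$ you need $\rho^{(k)}$ to be Lipschitz, i.e.\ $\rho^{(k+1)}\in L^\infty(\R)$. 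But the admissibility conditions of Definition~\ref{def:Admissible} only give $\rho'\in W^{j-1,\infty}(\R)$, that is $\rho^{(i)}\in L^\infty$ for $i\leq j$, which is one derivative short exactly when $k=j$. This is not a formal bookkeeping issue: for $\mathrm{ELU}_a$ with $a\neq 1$ — which satisfies every assumption of Theorem~\ref{thm:main} with $j=\tau=1$ — the function $\rho'$ has a jump at $0$, so $\rho''\notin L^\infty(\R)$ and your Lipschitz estimate on $\rho'$ fails for $k=j=1$. Your own closing remark anticipates difficulty here but misattributes it as combinatorial rather than as a missing order of smoothness; in fact the chain-rule bookkeeping is fine, it is the boundedness of $\rho^{(k+1)}$ that is unavailable.

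The paper sidesteps this entirely by exploiting the specific structure of the network built in the proof of Proposition~\ref{prop:main}: writing $\Phi_{\eps,f}=((A_{\mathrm{sum}},0))\conc\mathrm{P}(\Psi_1,\ldots,\Psi_T)$ (Equation~\eqref{eq:final_network}), the \emph{only} weights that depend on $f$ are the entries of $A_{\mathrm{sum}}$ and the induced bias in the last layer. These enter the realization \emph{linearly}, so rounding them produces an error controlled by $\sum_i\pabs{a_i-\widetilde a_i}\norm{\act{\Psi_i}}_{\Wkp[k][\infty][\cube^d]}$, which only needs the already-established polynomial bound on $\norm{\act{\Psi_i}}_{\Wkp[k][\infty][\cube^d]}$ — no Lipschitz property of $\rho^{(k)}$ is invoked. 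The remaining (inner) weights are $f$-independent and therefore lie, for each fixed $\eps$, in an a priori finite set $W_\eps$ of cardinality at most $\eps^{-s}$; they need not be rounded at all, only enumerated. The coding scheme then covers $W_\eps$, the rounding mesh, and the set of products arising from the concatenation. To repair your argument you would need to adopt this observation — round only the last-layer weights, which enter affinely — rather than a full-network perturbation bound.
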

\begin{proof}
We give a short outline of the proof here, the details can be found in Appendix~\ref{app:Encod}.
Let $\Phi_{\eps,f}=((A_1,b_1),\dots,(A_{L-1},b_{L-1}),(A_L,b_L))$ be the network from~Proposition~\ref{prop:main} (where the main work has already been done). From the proof of the proposition (see~Equation~\eqref{eq:final_network}) it follows that that $A_L= A_f\widetilde{A}_L$ and $b_L= A_f \widetilde{b}_L$ where the entries of the block diagonal matrix $A_f$ depend on $f$ and the entries of $ A_1,b_1,\dots,A_{L-1},b_{L-1},\widetilde{A}_L, \widetilde{b}_L$ are independent from $f$ (i.e.,\ they only depend on $\eps,n,d,p,k,\mu$).  We denote the collection of nonzero entries of $A_1,b_1,\dots,A_{L-1},b_{L-1},\widetilde{A}_{L},\widetilde{b}_{L}$ by ${W}_\eps$. 
\begin{itemize}
    \item The number of independent weights $\pabs{W_\eps}$ is bounded by $C\cdot\eps^{-d/(n-k-\drate)}$ since the total number of nonzero weights is bounded by this quantity.
    \item We round the entries of $A_f,b_f$ with a suitable precision $\nu$ to the mesh $[-\eps^{-\theta},\eps^{-\theta}]\cap\eps^\nu\Z$, where we also use the fact that the weights of $\Phi_{\eps,f}$ are bounded in absolute value by $C\eps^{-\theta}$.
         \item The nonzero entries of $A_L$ in the last layer of $ \Phi_{\eps,f}$ are in the set $G_{\mathrm{mult}}\coloneqq\{x_1x_2: x_1\in W_\eps, x_2\in \mesh\}$ with cardinality bounded by $\eps^{-\tilde s}$ (similar for $b_L$).
\end{itemize}
Hence, the weights of the approximating neural networks can be chosen from a set $\widetilde{W}_\eps$ with less than $\eps^{-s}$ real numbers, where $s>0$ only depends on $d,n,p,k,\mu$ and not on $f$. Consequently, there exists a surjective mapping $B_\eps:\{0,1\}^{\ceil{s\log_2(1/\eps)}}\to W_\eps$. The collection of these maps constitutes the coding scheme.
\end{proof}

\begin{small}
\begin{centering}
\renewcommand{\arraystretch}{2}
\begin{tabularx}{\textwidth}{|m{3.4cm}|m{2.7cm}|m{2.9cm}|m{2.1cm}|m{3.2cm}|}
\hline
  \textbf{Name}
& \textbf{Given by}
& \textbf{Smoothness} \linebreak \textbf{Boundedness}
& \textbf{PU-Decay \linebreak $(j,\tau)$} & \textbf{Approximation Rates} ($k\leq j$)
\\ \hline 
(leaky) ReLU, $a\in[0,1)$
& $\max\{ax,x\}$
& ${C(\R)\cap W^{1,\infty}_{\mathrm{loc}}(\R)}$ \linebreak Unbounded
& exact \linebreak $(1,1)$ & $\eps^{-d/(n-k)}\log(1/\eps)  $
\\ \hline
  exponential linear unit ($\mathrm{ELU}_a$), $a>0,~a\neq 1$
& $\begin{aligned}
    \begin{cases}x, \quad &{x\geq 0} \\ 
        a(e^x-1),\quad  & x<0 \end{cases} 
\end{aligned}  $
& ${C(\R)\cap W^{1,\infty}_{\mathrm{loc}}(\R)}$ \linebreak Unbounded
& exponential \linebreak $(1,1)$ & $\eps^{-d/(n-k)} $
\\ \hline
  exponential linear unit ($\mathrm{ELU}_1$)
& $\begin{aligned} \begin{cases}x, \quad &{x\geq 0} \\ 
        e^x-1,\quad  & x<0 \end{cases} 
\end{aligned}$ & ${C^1(\R)\cap W^{2,\infty}_{\mathrm{loc}}(\R)}$ \linebreak Unbounded & exponential \linebreak (2,1) & $\begin{alignedat}{2}&\eps^{-d/(n-k)} &&\text{ for } k\leq 1,\\ &\eps^{-d/(n-2-\mu)} &&\text{ for } k=2\end{alignedat}$ 
\\ \hline 
  softsign
& $\displaystyle\frac{x}{1+|x|}$
& ${C^1(\R)\cap W^{2,\infty}(\R)}$ \linebreak Bounded
& polynomial \linebreak $(2,0)$ & $ \eps^{-d/(n-k)} $\linebreak for $k=0$
\\ \hline
  inverse square root linear unit, $a>0$
&$\begin{alignedat}{1}
    \begin{cases}x, \quad &{x\geq 0} \\ 
        \frac{x}{\sqrt{1 + a x^2}},\quad  & x<0 \end{cases} 
\end{alignedat}   $
& ${C^2(\R)\cap W^{3,\infty}_{\mathrm{loc}}(\R)}$ \linebreak Unbounded
&  polynomial \linebreak $(3,1)$ & $\eps^{-d/(n-k)} $ \linebreak for $k\leq 1$
\\ \hline
  inverse square root unit, $a>0$
& $\displaystyle\frac{x}{\sqrt{1 + a x^2}}$ 
& Analytic \linebreak Bounded
& polynomial \linebreak $(j,0)$ $\forall j\in\N_0$ &$ \eps^{-d/(n-k)} $ \linebreak for $k=0$
\\ \hline
  sigmoid / logistic
& $\displaystyle\frac{1}{1+e^{-x}}$
& Analytic \linebreak Bounded
& exponential \linebreak $(j,0)$ $\forall j\in\N_0$ & $\begin{alignedat}{2}&\eps^{-d/n} &&\text{ for } k=0,\\ &\eps^{-d/(n-k-\mu)} &&\text{ for } k\geq 1\end{alignedat}$  
\\ \hline
  tanh
& $\displaystyle \frac{e^x-e^{-x}}{e^x+e^{-x}}$
& Analytic \linebreak Bounded
& exponential \linebreak $(j,0)$ $\forall j\in\N_0$ & $\begin{alignedat}{2}&\eps^{-d/n} &&\text{ for } k=0,\\ &\eps^{-d/(n-k-\mu)} &&\text{ for } k\geq 1\end{alignedat}$ 
\\ \hline
  arctan
& $\arctan(x)$
& Analytic \linebreak Bounded
& polynomial \linebreak $(j,0)$ $\forall j\in\N_0$ &$ \eps^{-d/(n-k)} $ \linebreak for $k=0$
\\ \hline
  softplus
& $\ln(1+e^x)$ 
& Analytic \linebreak Unbounded
& exponential \linebreak $(j,1)$ $\forall j\in\N_0$ & $\begin{alignedat}{2}&\eps^{-d/n} &&\text{ for } k\leq 1,\\ &\eps^{-d/(n-k-\mu)} &&\text{ for } k\geq 2\end{alignedat}$ 
\\ \hline
 swish
& $\displaystyle \frac{x}{1+e^{-x}}$
& Analytic \linebreak Unbounded
& exponential \linebreak $(j,1)$ $\forall j\in\N_0$ & $\begin{alignedat}{2}&\eps^{-d/n} &&\text{ for } k\leq 1,\\ &\eps^{-d/(n-k-\mu)} &&\text{ for } k\geq 2\end{alignedat}$ 
\\ \hline
 rectified power unit (RePU), $a\in \N_{\geq 2}$
& $\max\{0,x\}^a$
& $C^{a-1}(\R)\cap W^{a,\infty}_{\mathrm{loc}}(\R)$ \linebreak Unbounded
& exact \linebreak $(a,a)$ & $\eps^{-d/(n-k)}  $
\\ \hline
 \caption{Commonly-used activation functions, the type of PU they admit and the approximation rates in $W^{k,p}$ in terms of the number of nonzero weights. The rates are provided by Theorem~\ref{thm:main} and, for the (leaky) ReLU case, in combination with Remark~\ref{rem:Plug}. The results for the (leaky) ReLU are consistent with those rates derived in \cite{yarotsky2017error,guhring2019error}. $\mu>0$ is arbitrary and, unless specified otherwise, $k\in \{0,\dots,j\}$ and $n\geq k+1$.} 
 \label{tab:ActFunctions}
\end{tabularx}
\end{centering}
\end{small}

\begin{remark}[Plug \& Play] \label{rem:Plug}
Some well-known activation functions, e.g.,\ the (leaky) ReLU, do not fulfill all assumptions stated in~Proposition~\ref{prop:main} and Theorem~\ref{thm:main} ($\rho$ should be three times continuously differentiable in a neighbourhood of some $x_0\in\R$ with $\rho''(x_0)\neq 0$). However, we note that our proof strategy only requires the approximation of monomials and an approximate multiplication. In case of the (leaky) ReLU this can be done with $\mathcal{O}(\log(1/\eps))$ weights and layers (see~\cite[Proposition~2 and~3]{yarotsky2017error}). Generally speaking: As long as an activation allows for 
\begin{itemize}
    \item the construction of an (approximate) PU along the lines of Definition~\ref{def:partition_of_unity},
    \item  an efficient approximation of polynomials and the identity function,
\end{itemize} our proof strategy can be employed to yield efficient convergence rates. As such, our framework is very general and unifies several previous approaches (e.g.\ \cite{yarotsky2017error, guhring2019error}) as well as extends the previously known rates to a very general class of activation functions and rather general smoothness norms.
\end{remark}

\begin{remark}[Tightness of the Bounds]
     From Corollary~\ref{cor:sobolev_low_bound_enc} (\ref{item:lowerbounds_sobolev}) it follows that our bounds for encodable neural network weights are tight up to a log factor for $k\leq \tau$. For $k\geq \tau +1$, they are (up to a log factor) tight in the case of exact PUs and we get arbitrarily close to the optimal bound (again up to a log factor) in case of exponential PUs. If we allow for \emph{arbitrary} weights, then this upper bound might be drastically improved (see~Remark~\ref{rem:lower_bounds_nonenc}).
\end{remark}

\section*{Acknowledgments}
The authors would like to thank Philipp Petersen for fruitful discussions on the topic. Moreover, they would like thank the anonymous reviewers for suggestions to improve the manuscript. 
I.\ Gühring acknowledges support from the Research Training Group ``Differential Equation- and Data-driven Models in Life Sciences and Fluid Dynamics: An Interdisciplinary
Research Training Group (DAEDALUS)'' (GRK 2433) funded by the German Research Foundation (DFG).


\appendix

\section{Notation and Auxiliary Results}\label{app:Not}
In this subsection, we depict the (mostly standard) notation used throughout this paper. We set $\N\coloneqq\{1,2,\ldots\}$ and $\N_0\coloneqq \N\cup \{0\}$. For $k\in\N_0$ we define $\N_{\geq k}\coloneqq\{k,k+1,\ldots\}$. For a set $A$ we denote its cardinality by $\lvert A\rvert\in\N\cup\{\infty\}$ and by $\mathbbm{1}_A$ its indicator function of $A$. If $x\in\R$, then we write $\ceil{x}\coloneqq\min\{k\in\Z:k\geq x\}$ where $\Z$ is the set of integers and $\floor{x}\coloneqq \max\{k\in\Z:k\leq x\}$. 
        
If $d\in\N$ and $\norm{\cdot}$ is a norm on $\R^d$, then we denote for $x\in\R^d$ and $r>0$ by $B_{r,\norm{\cdot}}(x)$ the open ball around $x$ in $\R^d$ with radius $r$, where the distance is measured in $\norm{\cdot}$. By $\pabs{x}$ we denote the euclidean norm of $x$ and by $\norm{x}_{\infty}$ the maximum norm. We endow $\R^d$ with the standard topology and for $A\subset \R^d$ we denote by $\overline{A}$ the closure of $A.$

For $d_1,d_2\in\N$ and a matrix $A\in\R^{d_1,d_2}$ the number of nonzero entries of $A$ is counted by $\norm{\cdot}_{0}$, i.e.\
	\[
		\norm{A}_{0}\coloneqq\pabs*{\{(i,j): A_{i,j}\neq 0\}}.
	\]
 If $f:X\to Y$ and $g:Y\to Z$ are two functions, then we write $g\circ f:X\to Z$ for their composition. If additionally $U\subset X,$ then $f|_U:U\to Y$ denotes the restriction of $f$ onto $U$.
We use the usual \emph{multiindex} notation, i.e.\ for $\alpha\in\N_0^d$ we write $\pabs{\alpha}\coloneqq\alpha_1+\ldots + \alpha_d$ and $\alpha!\coloneqq\alpha_1!\cdot\ldots\cdot \alpha_d!$. Moreover, if $x\in\R^d$, then we have
	\[
		x^\alpha\coloneqq\prod_{i=1}^d x_i^{\alpha_i}.
		\]
Let from now on $\Omega\subset\R^d$ be open. For a function $f:\Omega\to\R,$ we denote by
\[
	D^\alpha f\coloneqq\frac{\partial^{\pabs{\alpha}}f}{\partial x_1^{\alpha_1} \partial x_2^{\alpha_2}\cdots \partial x_d^{\alpha_d}}.
\]
its (weak or classical) derivative of order $\alpha$. For $n\in\N_0 \cup \{\infty\}$, we denote by $C^n(\Omega)$ the set of $n$ times continuously differentiable functions on $\Omega$. Additionally, if $\overline{\Omega}$ is compact, we set, for $f\in C^n(\Omega)$ 
\begin{align*}
    \|f\|_{C^n(\overline{\Omega})} \coloneqq \max_{0\leq|\alpha|\leq n}~\sup_{x\in \Omega} |D^{\alpha}f(x)|.
\end{align*}
We denote by $L^p(\Omega),~1\leq p \leq \infty$ the standard Lebesgue spaces. 

In the following, we will also make use of the following well-known fact stating that the exponential function decays faster than any polynomial.

\begin{proposition}\label{prop:ExpPolEst}
Let $\alpha,\beta,c,c'>0.$ Then
\begin{align*}
    \lim_{x\to \infty} \frac{c'x^\alpha}{e^{c\cdot x^\beta}} = 0.
\end{align*}
This implies that for all $\gamma>0$ there exists some constant $C=C(\alpha,\beta,\gamma)>0$ such that for all $x >0$ there holds
\begin{align*}
    \frac{c'x^\alpha}{e^{c\cdot x^\beta}} \leq Cx^{-\gamma}.
\end{align*}
\end{proposition}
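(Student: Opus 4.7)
The plan is to reduce both claims to the elementary fact that a single term of the Taylor series of $\exp$ already dominates any monomial. For the limit, I would pick $N\in\N$ with $N\beta>\alpha$ and use the pointwise lower bound $e^{cx^\beta}\geq (cx^\beta)^N/N!$, valid for all $x>0$ as one non-negative summand of the exponential series. This yields
\begin{align*}
\frac{c'x^\alpha}{e^{cx^\beta}}\;\leq\;\frac{c'\cdot N!}{c^N}\cdot x^{\alpha-N\beta},
\end{align*}
and since $\alpha-N\beta<0$, the right-hand side tends to $0$ as $x\to\infty$. This proves the first assertion.

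For the second claim, I would apply the limit just proved with $\alpha$ replaced by $\alpha+\gamma>0$ (which is permitted since $\alpha+\gamma,\beta,c,c'>0$), obtaining
\begin{align*}
\lim_{x\to\infty}\frac{c'x^{\alpha+\gamma}}{e^{cx^\beta}}=0.
\end{align*}
The function $f(x)\coloneqq c'x^{\alpha+\gamma}e^{-cx^\beta}$ is continuous on $[0,\infty)$ with $f(0)=0$ and $\lim_{x\to\infty}f(x)=0$, so $f$ is bounded on $[0,\infty)$ by some constant $C=C(\alpha,\beta,\gamma)$ (the dependence on the fixed positive constants $c,c'$ is absorbed, as the statement allows). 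Dividing the inequality $f(x)\leq C$ by $x^\gamma$ for $x>0$ gives exactly the claimed bound $c'x^\alpha/e^{cx^\beta}\leq Cx^{-\gamma}$.

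There is essentially no obstacle; the only point requiring care is verifying that the bounding constant from the extreme value argument does not depend on $x$, which is why one first needs the vanishing of $f$ at both endpoints $0$ and $\infty$. The whole proof is a standard exercise, written out here only because the estimate is invoked repeatedly in the appendices when comparing decay rates of exponential and polynomial partitions of unity.
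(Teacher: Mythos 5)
The paper states Proposition~\ref{prop:ExpPolEst} as a well-known fact and gives no proof at all, so there is nothing to compare your argument against; you are filling a deliberate gap rather than replicating a hidden derivation. Your proof is correct and is the standard one: bound $e^{cx^\beta}$ from below by the single Taylor term $(cx^\beta)^N/N!$ to get polynomial-decay of the ratio, then for the second claim apply the limit with exponent $\alpha+\gamma$ and use continuity plus vanishing at both endpoints of $[0,\infty)$ to extract a uniform bound. One small point you were right to flag explicitly: as written the paper claims $C=C(\alpha,\beta,\gamma)$, but the constant produced by any such extreme-value argument necessarily depends on $c$ and $c'$ as well (e.g.\ take $c\to 0$ with everything else fixed and the sup blows up), so the paper's parameter list is slightly sloppy; your remark that the dependence on the ``fixed positive constants $c,c'$ is absorbed'' is the correct reading, and indeed in every invocation inside the appendices $c$ and $c'$ are fixed before the proposition is applied.
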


\section{Sobolev Spaces}\label{app:Sobolev}
In this section, we introduce Sobolev spaces (see \cite{adams1975sobolev}) which constitute a crucial concept within the theory of PDEs (see e.g.~\cite{roubivcek2013nonlinear,evans1998partial}). 

\begin{definition}
    Given some domain $\Omega\subset\R^d$, $1\leq p< \infty$, and $n\in\N$, the Sobolev space $\Wkp[n]$ is defined as
		\[
			\Wkp[n]:=\left\{f:\Omega\to\R :  \norm{D^\alpha f}_{L^p(\Omega)}^p  < \infty, \text{ for all } \alpha\in\N_0^d\text{ with }\pabs{\alpha}\leq n\right\},
		\]
		and is equipped with the norm
			\[
				\norm{f}_{\Wkp[n]}:=\left(\sum_{0\leq \pabs{\alpha}\leq n}\norm{D^\alpha f}_{L^p(\Omega)}^p\right)^{1/p}.
				\]
Additionally, we set
\[
			\Wkp[n][\infty]:=\left\{f:\Omega\to\R : \norm{D^\alpha f}_{L^\infty(\Omega)} < \infty \text{ for all } \alpha\in\N_0^d\text{ with }\pabs{\alpha}\leq n\right\},
		\]
		and we equip this space with the norm
		$\norm{f}_{\Wkp[n][\infty]}:=\max_{|\alpha|\leq n}\norm{D^\alpha f}_{L^\infty(\Omega)}$.
Moreover, for $0\leq k\leq n,$ on $\Wkp[n]$ we introduce the family of \emph{semi-norms} 

\[|f|_{W^{k,p}(\Omega)} \coloneqq \left( \sum_{|\alpha|=k} \norm{D^\alpha f}_{L^p(\Omega)}^p \right)^{1/p},\qquad 	\pabs{f}_{\Wkp[k][\infty]}:=\max_{|\alpha|=k}\norm{D^\alpha f}_{L^\infty(\Omega)}, \]
respectively. Finally, let $W^{n,p}_{\mathrm{loc}}(\Omega)\coloneqq \{f:\Omega\to\R:~f|_{\widetilde{\Omega}}\in W^{n,p}(\widetilde{\Omega})~\text{ for all compact } \widetilde{\Omega}\subset \Omega\}.$
\end{definition}
\begin{remark}
    If $\Omega$ is bounded and fulfills a local Lipschitz condition, arguments from \cite{adams1975sobolev} show that $W^{2,\infty}(\Omega)$ can be continuously embedded into $C^1(\overline{\Omega})$. This can be seen as follows: \cite[Theorem 4.12]{adams1975sobolev} shows that $W^{2,p}(\Omega)$ can be continuously embedded into $C^1(\overline{\Omega})$ for $p>d.$ Since also $W^{2,\infty}(\Omega)$ can be continuously embedded into $W^{2,p}(\Omega),$ the claim follows.
\end{remark}

\begin{remark}\label{remark:extension_operator}
For purely technical reasons we sometimes make use of an extension operator. For this, let $E:\Wkp[n][p][\cube^d]\to\Wkp[n][p][\R^d]$ be the extension operator from \cite[Theorem~VI.3.1.5]{stein2016singular} and set $\widetilde f\coloneqq Ef$. Note that for arbitrary $\Omega\subset\R^d$ and $0\leq k\leq n$ it holds
	\begin{equation}\label{eq:extension_bound}
		\pabs[\big]{\widetilde f}_{\Wkp[k][p][\Omega]} \leq \norm[\big]{\widetilde f}_{\Wkp[n][p][\R^d]} \leq C \norm{f}_{\Wkp[n][p][\cube^d]},
	\end{equation}
	where $C=C(n,p,d)$ is the norm of the extension operator. 
\end{remark}

The following lemma which will be crucial for the proofs of our results can be stated in much more generality (see~\cite[Chapter 4.1]{brenner2007mathematical}) and relies on the use of averaged Taylor polynomials. We only state a version tailored to our specific needs and will not give a proof since the details of this specific version have been worked out in~\cite[Section B.3 and Lemma C.4]{guhring2019error}.
\begin{lemma}[Bramble-Hilbert]\label{lemma:bramble_hilbert}
	Let $d,n\in\N$ and $1\leq p\leq\infty$. Furthermore, let $N\in\N$ and set for $m\in\MNd$
	\[\Omega_{m,N} \coloneqq B_{\frac{1}{N},\norm{\cdot}_{\infty}}\Big(\frac{m}{N}\Big).
	\]
	 Then there exists a constant $C=C(n,d)>0$ such that for all $f\in\Wkp[n][p][\R^d]$ and $m\in\MNd$ there is a polynomial $p_{m}(x)=\sum_{\pabs{\alpha}\leq n-1} c_{\alpha}x^\alpha$ such that
	\begin{equation*}
		\norm[\big]{f-p_m}_{\Wkp[k][p][\Omega_{ m,N}]}\leq C\left(\frac{1}{N}\right)^{n-k}\norm{f}_{\Wkp[n][p][\Omega_{m,N}]},\quad\text{for } k=0,1,\ldots,n
	\end{equation*}
	and the coefficients $c_\alpha$ are bounded by $\pabs{c_\alpha}\leq C N^{d/p}\norm{f}_{\Wkp[n][p][\Omega_{m,N}]}$ for all $\alpha$ with $\pabs{\alpha}\leq n-1$.
\end{lemma}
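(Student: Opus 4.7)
My plan is to take $p_m$ to be an averaged (Sobolev) Taylor polynomial of $f$ on a ball contained in the patch $\Omega_{m,N}$, and derive both the Sobolev error estimate and the coefficient bound from this explicit integral formula. Concretely, let $B_m \subset \Omega_{m,N}$ be a Euclidean ball of radius $c_d/N$ centered at $m/N$, and let $\phi_m \in C_c^\infty(B_m)$ be a smooth cutoff with $\int \phi_m = 1$, obtained by scaling a fixed reference mollifier; this gives $\|\phi_m\|_\infty \leq C(d)\, N^d$ and hence $\|\phi_m\|_{L^{p'}(B_m)} \leq C(d,p)\, N^{d/p}$. Define
\[
p_m(x) \coloneqq \sum_{|\alpha|<n} \frac{1}{\alpha!} \int_{B_m} \phi_m(y)\, D^\alpha f(y)\, (x-y)^\alpha\, dy,
\]
which is a polynomial in $x$ of degree at most $n-1$.

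For the Sobolev estimate I would run the standard Bramble--Hilbert argument on a reference configuration. The affine map $T(x) = N(x - m/N)$ sends $\Omega_{m,N}$ to the fixed cube $(-1,1)^d$ and $B_m$ to a fixed reference ball, scaling derivatives by $N$ and volumes by $N^{-d}$. The averaged Taylor operator reproduces polynomials of degree $<n$, so a compactness/quotient-space argument (on the Banach space $W^{n,p}/\mathbb{P}_{n-1}$ of the reference cube) yields $\|g - \widehat p\|_{W^{k,p}((-1,1)^d)} \leq C(n,d)\, |g|_{W^{n,p}((-1,1)^d)}$ uniformly in $g \in W^{n,p}$. Pulling back via $T$ produces the factor $N^{-(n-k)}$ from chain rule plus change of variables, and bounding the seminorm $|\cdot|_{W^{n,p}}$ by the full norm $\|\cdot\|_{W^{n,p}}$ gives the claimed inequality with a constant depending only on $n$ and $d$ (and not on $m$, by translation invariance).

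For the coefficient bound I would expand $(x-y)^\alpha$ by the multinomial theorem to rewrite $p_m(x) = \sum_{|\beta|\leq n-1} c_\beta x^\beta$ with
\[
c_\beta = \sum_{|\alpha|<n,\ \beta \leq \alpha} \frac{(-1)^{|\alpha-\beta|}}{\beta!\,(\alpha-\beta)!} \int_{B_m} \phi_m(y)\, D^\alpha f(y)\, y^{\alpha-\beta}\, dy.
\]
Since $m/N \in [0,1]^d$ and $B_m$ has radius at most $1$, the factor $|y^{\alpha-\beta}|$ is bounded on $B_m$ by a constant $C(n,d)$. Hölder's inequality then gives
\[
|c_\beta| \leq C(n,d)\, \|\phi_m\|_{L^{p'}(B_m)}\, \|D^\alpha f\|_{L^p(\Omega_{m,N})} \leq C(n,d)\, N^{d/p}\, \|f\|_{W^{n,p}(\Omega_{m,N})},
\]
as required (with the usual convention $N^{d/\infty}=1$ when $p=\infty$).

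The main subtlety is not conceptual but bookkeeping: keeping all constants uniform in $m$ and tracking how the $N$-factors arise. The $N^{d/p}$ in the coefficient bound comes entirely from the pair $\|\phi_m\|_\infty \lesssim N^d$ and $|B_m| \lesssim N^{-d}$ interacting through Hölder, while the $N^{-(n-k)}$ in the Sobolev estimate comes from the diameter $1/N$ of $\Omega_{m,N}$ through the reference-domain rescaling; once the classical Bramble--Hilbert inequality on a fixed reference domain is in hand, both bounds fall out together.
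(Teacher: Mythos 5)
Your proposal is correct and takes essentially the same route as the source the paper cites. Note that the paper does not prove Lemma~\ref{lemma:bramble_hilbert} itself; it explicitly defers to~\cite[Section B.3, Lemma C.4]{guhring2019error} and to the treatment of averaged Taylor polynomials in~\cite[Chapter 4.1]{brenner2007mathematical}. You choose the same projector (an averaged Taylor polynomial of degree $\leq n-1$ with a mollifier $\phi_m$ of total mass one, supported in a ball of radius comparable to $1/N$ inside the cube $\Omega_{m,N}$), you derive the coefficient bound via the same multinomial expansion followed by H\"older, and the source of each $N$-power is exactly as you describe: $N^{d/p}$ comes from $\|\phi_m\|_{L^{p'}}\lesssim N^{d/p}$, and $N^{-(n-k)}$ comes from pulling back along the affine map that sends $\Omega_{m,N}$ to the fixed reference cube. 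Your scaling bookkeeping is accurate, and the $p=\infty$ endpoint is handled correctly because the H\"older pairing degenerates to $L^1$--$L^\infty$ and the $N^{d/p}$ factor becomes $1$.

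The one methodological difference worth noting is how the reference-domain estimate
\[
\|g-\widehat p\|_{W^{k,p}((-1,1)^d)}\leq C(n,d)\,\pabs{g}_{W^{n,p}((-1,1)^d)}
\]
is obtained. You invoke the Deny--Lions quotient-space compactness argument, using that the averaged Taylor operator is a bounded projection reproducing $\mathbb{P}_{n-1}$ together with the equivalence of the $W^{n,p}$-seminorm and the quotient norm on a fixed Lipschitz reference domain. The Brenner--Scott approach (which the cited sources follow) instead derives the same estimate constructively from the Sobolev integral representation of the Taylor remainder, which yields an explicit Riesz-potential bound and hence an explicit constant rather than one whose existence is guaranteed by compactness. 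Both are standard, both require star-shapedness of the reference cube with respect to the mollifier's supporting ball (which holds trivially here), and both give the same $N$-scaling. The compactness route is shorter to state; the constructive route is preferable if one wants to trace constants or extend to non-integer smoothness. Either way, your proof establishes the stated lemma.
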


Now we turn our attention to a version of a product rule tailored to our needs.

\begin{lemma}\label{lemma:product_rule_bound}
	Let $k\in\N$, and assume that $f\in \Wkp[k][\infty][\Omega]$ and $g\in \Wkp[k][p][\Omega]$ with $1\leq p\leq \infty.$ If $k\geq 3,$ additionally assume that $f\in C^k(\Omega)$ or $g\in C^k(\Omega).$ Then $fg\in\Wkp[k][p][\Omega]$ and there exists a constant $C=C(d,p,k)>0$ such that
	\[
		\norm{fg}_{\Wkp[k][p]}\leq C \sum_{i=0}^k\norm{f}_{\Wkp[i][\infty]} \norm{g}_{\Wkp[k-i][p]},
		\]
	and, consequently
	\[
		\norm{fg}_{\Wkp[k][p]}\leq C \norm{f}_{\Wkp[k][\infty]} \norm{g}_{\Wkp[k][p]}.
		\]
\end{lemma}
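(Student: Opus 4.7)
The plan is to reduce everything to the multi-index Leibniz formula
\[
D^{\alpha}(fg) \;=\; \sum_{\beta\leq \alpha}\binom{\alpha}{\beta}\,D^{\beta}f\cdot D^{\alpha-\beta}g \qquad \text{(as weak derivatives)},
\]
valid for every multi-index $\alpha$ with $|\alpha|\leq k$, and then to bound each summand by the obvious Hölder estimate $\|D^{\beta}f\,D^{\alpha-\beta}g\|_{L^{p}}\leq \|D^{\beta}f\|_{L^{\infty}}\|D^{\alpha-\beta}g\|_{L^{p}}$. Everything else is bookkeeping.

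First I would establish the Leibniz formula as an identity between weak derivatives. For $k=1$ this is the standard weak product rule (see e.g.\ \cite{LectureProduct}), whose hypotheses are satisfied because $f\in W^{1,\infty}$ and $g\in W^{1,p}$ both imply $fg,\,fD_i g,\,gD_i f\in L^{p}(\Omega)$. For $k=2$ I would iterate: knowing that $fg\in W^{1,p}$ with weak gradient $fD_i g+gD_i f$, each of $fD_i g$ and $gD_i f$ is again a product of a $W^{1,\infty}$ and a $W^{1,p}$ factor, so the $k=1$ case applies once more. For $k\geq 3$ this naive iteration breaks down because intermediate summands would require Leibniz applied to two merely-$W^{1,p}$ functions; this is precisely where the assumption $f\in C^{k}(\Omega)$ or $g\in C^{k}(\Omega)$ enters: it lets one factor be treated classically (so that at every iteration at least one factor has a continuous, hence locally bounded, derivative and the product rule reduces to a single differentiation of the Sobolev factor). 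Formally I would run an induction on $k$ in which the inductive hypothesis is the formula for order $k-1$ together with the membership $fg\in W^{k-1,p}$.

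With the Leibniz formula in hand, bounding the $L^{p}$-norm of each summand via Hölder and grouping terms according to $|\beta|=i$ produces
\[
\|D^{\alpha}(fg)\|_{L^{p}}\leq \sum_{\beta\leq \alpha}\binom{\alpha}{\beta}\|D^{\beta}f\|_{L^{\infty}}\|D^{\alpha-\beta}g\|_{L^{p}}\leq C(k,d)\sum_{i=0}^{|\alpha|}|f|_{W^{i,\infty}(\Omega)}\,|g|_{W^{|\alpha|-i,p}(\Omega)},
\]
since the number of multi-indices $\beta$ with $|\beta|=i$ and $\beta\leq \alpha$ is bounded by a constant depending only on $k$ and $d$. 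Taking a $p$-sum (or maximum, if $p=\infty$) over all $|\alpha|\leq k$, and absorbing combinatorial constants, yields the first claimed inequality
\[
\|fg\|_{W^{k,p}(\Omega)}\leq C\sum_{i=0}^{k}\|f\|_{W^{i,\infty}(\Omega)}\|g\|_{W^{k-i,p}(\Omega)}.
\]
The second inequality is then immediate from the monotonicity $\|f\|_{W^{i,\infty}}\leq \|f\|_{W^{k,\infty}}$ and $\|g\|_{W^{k-i,p}}\leq \|g\|_{W^{k,p}}$ applied termwise, absorbing the factor $k+1$ into $C$.

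The main obstacle is the justification of the weak Leibniz rule for $k\geq 3$; this is exactly the reason for the extra $C^{k}$ assumption, which sidesteps a more delicate mollification/approximation argument by making one factor classically differentiable. Once that identity is secured, the remainder is a purely algebraic application of Hölder together with the combinatorics of multi-indices and adds no substantive difficulty.
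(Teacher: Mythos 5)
Your proof is correct and follows essentially the same route as the paper: reduce to the weak Leibniz formula, bound each summand by H\"older, and group multi-indices by $|\beta|=i$ to arrive at the sum over $i=0,\dots,k$. One minor quibble with your motivating remark for $k\geq 3$: iterating the $k=1$ product rule never actually forces you to apply Leibniz to two merely-$W^{1,p}$ factors, since in any summand $D^{\beta}f\cdot D^{\gamma}g$ with $|\beta|+|\gamma|<k$ one has $D^{\beta}f\in W^{k-|\beta|,\infty}\subset W^{1,\infty}$; the $C^{k}$ hypothesis is taken as given (and the paper cites the Leibniz formula from \cite{LectureProduct} for this case), but it is not dictated by a genuine breakdown of the iteration you describe.
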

\begin{proof}
For $k=0$ the statement is obvious. 

For $k=1$ we get from~\cite[Lemma~B.6]{guhring2019error} that there exists a constant $C=C(d,p)>0$ such that
	\[
		\pabs{fg}_{\Wkp[1][p]}\leq C \left(\norm{f}_{\Wkp[1][\infty]} \norm{g}_{\Lp} + \norm{f}_{\Linf} \norm{g}_{\Wkp[1][p]}\right),
		\]
		 from which the statement can easily be deduced.
		 
For $k=2$ it follows from \cite[Chap.~7.3]{gilbarg1998elliptic} that the usual product rule also holds for the second order derivatives such that we have
\begin{align*}
		\pabs{fg}_{\Wkp[2][p]}&\leq C \sum_{i,j=1,\ldots,d}\left\|\frac{\partial^2 }{\partial x_i\partial x_j} fg\right\|_{\Lp[p]} +\left\|\frac{\partial }{\partial x_i}f \frac{\partial }{\partial x_j}g\right\|_{\Lp[p]}+\left\|\frac{\partial }{\partial x_j}f \frac{\partial }{\partial x_i}g \right\|_{\Lp[p]}+\left\|f \frac{\partial^2}{\partial x_i\partial x_j}g\right\|_{\Lp[p]} \\
		&\leq C \left(\norm{f}_{\Wkp[2][\infty]} \norm{g}_{\Lp} + \norm{f}_{\Wkp[1][\infty]} \norm{g}_{\Wkp[1][p]} +\norm{f}_{\Linf} \norm{g}_{\Wkp[2][p]}\right).
\end{align*}
Again the overall statement follows easily.
The statement for $k\in\N_{\geq 3}$ can directly be concluded from the Leibniz formula (see \cite[Lemma 8.18]{LectureProduct}) 
\begin{align*}
    D^\alpha(fg)=\sum_{|\beta|\leq |\alpha|} \binom{\alpha}{\beta} D^\beta f D^{\alpha-\beta} g.
\end{align*}

\end{proof}

The following corollary establishes a chain rule estimate for $W^{k,\infty}$.
\begin{corollary}\label{cor:composition_norm}
	Let $d,m\in\N,$ $k\in\N_{\geq 2}$ and $\Omega_1\subset \R^d$, $\Omega_2\subset\R^m$ both be open, bounded, and convex. Then, there is a constant $C=C(d,m,k)>0$ with the following properties:
	\begin{enumerate}[(i)]
	    \item If $k=2$ and $f\in \Wkpm[2][\infty][\Omega_1][m]\cap C^1(\Omega_1;\R^m)$ and $g\in \Wkp[2][\infty][\Omega_2]\cap C^1(\Omega_2)$ such that $\mathrm{Range} (f)\subset \Omega_2$, then for the composition $g\circ f$ it holds that $g\circ f\in \Wkp[2][\infty][\Omega_1]\cap C^1(\Omega_1)$ and we have 
	\begin{align*}
		\pabs{g\circ f}_{\Wkp[1][\infty][\Omega_1]}\leq C \pabs{g}_{\Wkp[1][\infty][\Omega_2]}\pabs{f}_{\Wkpm[1][\infty][\Omega_1][m]},
	\end{align*}
		and
	\begin{align*}
		\pabs{g\circ f}_{\Wkp[2][\infty][\Omega_1]}\leq C \left(\pabs{g}_{\Wkp[2][\infty][\Omega_2]}\pabs{f}_{\Wkpm[1][\infty][\Omega_1][m]}^2+\pabs{g}_{\Wkp[1][\infty][\Omega_2]}\pabs{f}_{\Wkpm[2][\infty][\Omega_1][m]} \right).
	\end{align*}
	
	\item If $k\geq 3,$  $f\in C^k(\overline{\Omega}_1;\R^m)$ and $g\in C^k(\overline{\Omega}_2)$ such that $\mathrm{Range} (f)\subset \Omega_2,$ then for the composition $g\circ f$ it holds that $g\circ f\in C^k(\Omega_1)$ and 
	\begin{enumerate}[(a)]
	    \item if $\pabs{f}_{\Wkpm[l][\infty][\Omega_1]}\leq C N^l$ for all $l=1,\dots,k$, then
	\begin{equation}\label{eq:multi_dim_chainrule}
	    \pabs{g\circ f}_{\Wkp[k][\infty][\Omega_1]}\leq C\sum_{l=1}^k \pabs{g}_{\Wkp[l][\infty][\Omega_2]}N^k;
	\end{equation}
	\item 
	if $\tau\in\N_0$ and $\pabs{f}_{\Wkpm[l][\infty][\Omega_1]}\leq C N^{l+\mu\max\{0, l-\tau\}}$ for all $l=1,\dots,k$, then
	\begin{equation}\label{eq:mutli_dim_chainrule_mu}
	    \pabs{g\circ f}_{\Wkp[k][\infty][\Omega_1]}\leq C\sum_{l=1}^k \pabs{g}_{\Wkp[l][\infty][\Omega_2]}N^{k+\drate}.
	\end{equation}
	\end{enumerate}
	\end{enumerate}
\end{corollary}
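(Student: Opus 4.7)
The plan is to prove (i) directly from the classical chain rule and (ii) by applying the multivariate Faà di Bruno formula from \cite{ChainRule}, then carefully bookkeeping the resulting products.

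For (i), since $f \in C^1(\Omega_1;\R^m)$ and $g \in C^1(\Omega_2)$, the composition $g\circ f$ is classically differentiable with $\partial_i(g\circ f) = \sum_{j=1}^m ((\partial_j g)\circ f)\cdot \partial_i f_j$, and the first estimate follows by taking $L^\infty$-norms and summing. Differentiating once more (using that $\partial_j g \in W^{1,\infty}$ and applying~Lemma~\ref{lemma:product_rule_bound}) gives
\begin{equation*}
\partial_k\partial_i(g\circ f)=\sum_{j,l=1}^m \bigl((\partial_l\partial_j g)\circ f\bigr)\partial_k f_l\,\partial_i f_j+\sum_{j=1}^m \bigl((\partial_j g)\circ f\bigr)\,\partial_k\partial_i f_j,
\end{equation*}
and taking $L^\infty$ norms yields the second estimate with a constant depending only on $d,m$. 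This also shows $g\circ f\in C^1(\Omega_1)\cap W^{2,\infty}(\Omega_1)$.

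For (ii), the multivariate Faà di Bruno formula~\cite{ChainRule} writes, for any multiindex $\alpha$ with $|\alpha|=k$,
\begin{equation*}
D^{\alpha}(g\circ f)(x)=\sum_{s=1}^{k}\sum_{(\lambda,\beta)\in P(\alpha,s)} c_{\lambda,\beta}\,(D^{\lambda}g)(f(x))\cdot\prod_{r=1}^{s}\bigl(D^{\beta^{(r)}}f_{j_r}(x)\bigr),
\end{equation*}
where the inner sum runs over a finite index set $P(\alpha,s)$ of tuples satisfying $|\lambda|=s$ and $\sum_{r=1}^{s}\beta^{(r)}=\alpha$ with $|\beta^{(r)}|\geq 1$. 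In particular, setting $l_r:=|\beta^{(r)}|\geq 1$, we have $l_1+\cdots+l_s=k$. Since $f\in C^k(\overline{\Omega}_1;\R^m)$ and $g\in C^k(\overline{\Omega}_2)$ the formula is valid pointwise and $g\circ f\in C^k(\Omega_1)$.

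For case (a), bounding each factor in $L^\infty$ gives
\begin{equation*}
\bigl\|(D^{\lambda}g)\circ f\cdot\prod_r D^{\beta^{(r)}}f_{j_r}\bigr\|_{L^\infty(\Omega_1)}\leq |g|_{W^{s,\infty}(\Omega_2)}\prod_{r=1}^{s}C N^{l_r}=C^{s}\,|g|_{W^{s,\infty}(\Omega_2)}\,N^{k},
\end{equation*}
and summing over the finitely many terms (whose number depends only on $d,m,k$) and over $s=1,\dots,k$ yields~\eqref{eq:multi_dim_chainrule}. For case (b), the product becomes $\prod_{r=1}^{s}N^{l_r+\mu\max\{0,l_r-\tau\}}=N^{k+\mu\sum_{r=1}^{s}\max\{0,l_r-\tau\}}$, and it remains to show the elementary inequality
\begin{equation*}
\sum_{r=1}^{s}\max\{0,l_r-\tau\}\leq \max\{0,k-\tau\}\qquad\text{whenever } l_r\geq 1,\ \sum_{r=1}^{s}l_r=k.
\end{equation*}
Let $S=\{r:l_r>\tau\}$. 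If $S=\emptyset$ the left side is $0$. Otherwise $\sum_{r\in S}(l_r-\tau)=\sum_{r\in S}l_r-|S|\tau\leq k-\tau$, and since $k\geq\sum_{r\in S}l_r\geq (\tau+1)>\tau$ we have $\max\{0,k-\tau\}=k-\tau$. Plugging this into the previous display gives~\eqref{eq:mutli_dim_chainrule_mu}.

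The main obstacle is simply the bookkeeping of Faà di Bruno's formula and the verification of the summation inequality above; the analytic content is routine once the combinatorial structure is in place. No smoothness issues arise because the assumptions $f\in C^k(\overline{\Omega}_1;\R^m)$, $g\in C^k(\overline{\Omega}_2)$ make the formula valid classically, so there is no need to appeal to a distributional product rule as in part (i).
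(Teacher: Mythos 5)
Your proof is correct and follows the same core strategy as the paper: the classical second-order chain-rule computation for part~(i) and the multivariate Fa\`a di Bruno formula for part~(ii). The one place where you genuinely deviate is the treatment of the exponent in part~(ii)(b). The paper splits into the cases $\tau=0$ (where it simply reuses~(a) with $N$ replaced by $N^{1+\mu}$) and $\tau\geq 1$ (where it runs a somewhat awkward bookkeeping argument through the index sets $p(\nu,\lambda)$, including a step that writes $k-\tau\sum_{j}|r_j|\leq k-\tau|l_{j'}||r_{j'}|$, which is not needed and appears to be a typo; only $\sum_{j}|r_j|\geq|r_{j'}|\geq 1$ is used). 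You instead isolate the single clean combinatorial inequality
\[
\sum_{r=1}^{s}\max\{0,l_r-\tau\}\leq \max\{0,k-\tau\}\quad\text{whenever }l_r\geq 1,\ \textstyle\sum_{r}l_r=k,
\]
and prove it uniformly in $\tau\in\N_0$ via the set $S=\{r:l_r>\tau\}$, which subsumes both of the paper's cases (for $\tau=0$ it reduces to equality). This is a tidier formulation that makes the dependence on the number of blocks in the Fa\`a di Bruno partition explicit. The only minor softness in your write-up is in part~(i): when you differentiate $((\partial_j g)\circ f)$ a second time you are implicitly using a chain rule for the Lipschitz function $\partial_j g\in W^{1,\infty}$ composed with the $C^1$ map $f$, which is exactly what the paper delegates to \cite[Corollary~B.5, Lemma~B.6]{guhring2019error}; citing that (or some equivalent justification of the a.e.\ chain rule) would close the gap, but this is the same level of detail the paper itself provides.
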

\begin{proof}
(i) can be shown by basic computations using the classical first derivative and~\cite[Corollary~B.5, Lemma~B.6]{guhring2019error}.
For (ii), we make use of the multivariate Faa Di Bruno formula (see~\cite[Theorem~2.1]{ChainRule}) and get that
\begin{equation*}
	    \pabs{g\circ f}_{\Wkp[k][\infty][\Omega_1]}\leq C\max_{\pabs{\nu}=k} \sum_{l=1}^k \pabs{g}_{\Wkp[l][\infty][\Omega_2]}\sum_{|\lambda|=l}\sum_{p(\nu,\lambda)}\prod_{j=1}^k\pabs{f}_{\Wkpm[\pabs{l_j}][\infty][\Omega_1][m]}^{\pabs{r_j}},
	\end{equation*}
	where 
\begin{equation*}
	p(\nu,\lambda):=\left\{\begin{array}{c}
		(r_1,\ldots,r_k;l_1,\ldots,l_k): \text{for some }1\leq s\leq k, r_i=0 \text{ and }l_i=0 \text{ for }1\leq i\leq k-s;\\[0.3em]
		\pabs{r_i}>0\text{ for } k-s+1\leq i\leq k; \text{ and }0\leq l_{k-s+1}\leq \ldots\leq l_k \text{ are such that }\\[0.3em]
		\sum_{i=1}^k r_i=\lambda, \sum_{i=1}^k\pabs{r_i}l_i = \nu.
	\end{array}
		\right\}.   
	\end{equation*}
Equation~\eqref{eq:multi_dim_chainrule} now follows from $\prod_{j=1}^k\pabs{f}_{\Wkpm[\pabs{l_j}][\infty][\Omega_1][m]}^{\pabs{r_j}}\leq C \prod_{j=1}^k N^{\pabs{l_j}\pabs{r_j}}=C N^{\sum_{j=1}^k \pabs{l_j}\pabs{r_j}}=C N^{k}$. Equation~\eqref{eq:mutli_dim_chainrule_mu} for $\tau=0$ follows from (a) with $N=N^{1+\mu}$. 
For $\tau\geq  1$, we have 
\begin{align*}
\prod_{j=1}^k N^{\mu\max\{0, \pabs{l_j}-\tau\}\pabs{r_j}}= N^{\mu \sum_{j=1}^k \max\{0,\pabs{l_j}-\tau\}\pabs{r_j}}
\end{align*}
and
\[
\sum_{j=1}^k \max\{0,\pabs{l_j}-\tau\}\pabs{r_j}=\sum_{j:\pabs{l_j}\geq \tau }^k (\pabs{l_j}-\tau )\pabs{r_j}.
\]
If $|l_j|< \tau$ for all $j=1,\dots,k$, then $\sum_{j:\pabs{l_j}\geq \tau }^k (\pabs{l_j}-\tau )\pabs{r_j}=0\leq \mu \max\{0,k-\tau\}.$ If there exists some $j'$ with $|l_{j'}|\geq \tau$ and $|r_{j}|=0$ for all $j$ with $|l_{j}|\geq \tau,$ then also $\sum_{j:\pabs{l_j}\geq \tau }^k (\pabs{l_j}-\tau )\pabs{r_j}=0\leq \mu \max\{0,k-\tau\}.$ Otherwise, there exists some $j'$ with $|l_{j'}|\geq \tau$ and $|r_{'j}|\geq 1.$ We then have 
$$\sum_{j:\pabs{l_j}\geq \tau }^k (\pabs{l_j}-\tau )\pabs{r_j} \leq \sum_{j:\pabs{l_j}\geq 1 }^k \pabs{l_j}\pabs{r_j}- \tau \sum_{j:|l_j|\geq \tau} |r_j| = k- \tau \sum_{j:|l_j|\geq \tau} |r_j| \leq k-\tau |l_{j'}|\,|r_{j'}| \leq k-\tau$$
from which the statement in combination with (a) follows.
\end{proof}

\section{Neural Network Calculus}\label{app:NNCalc}

In this section, we introduce several operations one can perform with neural networks, namely 
the \emph{concatenation} and the \emph{parallelization} of neural networks. Moreover, Section \ref{app:CalcMon} is devoted to approximations of polynomials. We give the proof of Proposition \ref{prop:MonApp} (approximation of monomials by neural networks) and show how to derive approximations of the identity function as well as of approximate multiplications.

We first consider the \emph{concatenation} of two neural networks as given in~\cite{petersen2017optimal}.
\begin{definition}
    Let $\Phi^1= \left((A^1_1,b^1_1),\dots,(A^1_{L_1},b_{L_1}^1 \right)$ and $\Phi^2=\left((A^1_1,b^1_1),\dots,(A^1_{L_1},b_{L_1}^1 \right)$ be two neural networks such that the input dimension of $\Phi^1$ is equal to the output dimension of $\Phi^2.$ Then the \emph{concatenation} of $\Phi^1,\Phi^2$ is defined as the $L_1+L_2-1$-layer neural network
    \begin{align*}
        \Phi^1\conc\Phi^2\coloneqq \left((A_1^2,b_1^2),\dots,(A_{L_2-1}^2,b_{L_2-1}^2),(A_1^1A_{L_2}^2,A_1^1b_{L_2}^2+b_1^1),(A_2^1,b_2^1),\dots,(A_{L_1}^1,b_{L_1}^1) \right).
    \end{align*}
\end{definition}

It is easy to see that $\Realization(\Phi^1\conc\Phi^2)=  \Realization(\Phi^1)\circ \Realization(\Phi^2)$.

Now, we introduce the \emph{parallelization} of neural networks with the same number of layers, inspired by the construction in \cite{petersen2017optimal}. 
\begin{lemma}\label{lem:ParalSame}
    Let $\varrho:\R\to \R$. Additionally, let $\Phi^1,\dots \Phi^n$ be neural networks with $d$-dimensional input and $L\in \N $ layers, respectively. Then,
    there exists a neural network
    $\mathrm{P}(\Phi^1,\dots,\Phi^n)$ with $d$-dimensional input and
    \begin{enumerate}[(i)]
        \item \label{item:parallel_i}
       There holds $\Realization\left(\mathrm{P}(\Phi^1,\dots,\Phi^n)\right)(x)= \left(\Realization(\Phi^1)(x),\dots,\Realization(\Phi^n)(x) \right)$ for all $x\in \R^d.$ ;
       \item \label{item:parallel_ii} $L$ layers;
       \item $M\left(\mathrm{P}(\Phi^1,\dots,\Phi^n)\right)= \sum_{i=1}^n M(\Phi^i)$; 
       \item
        $\left\| \mathrm{P}\left(\Phi^1,\dots,\Phi^n \right) \right\|_{\max} =
         \max\left\{\left\|\Phi^1\right\|_{\max},\dots,\left\|\Phi^n\right\|_{\max} \right\}$.
    \end{enumerate}
    \end{lemma}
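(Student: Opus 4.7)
The plan is to carry out the familiar ``stack and block-diagonalize'' construction, with a small adjustment in the first layer to account for the shared input.

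\textbf{Step 1 (construction).} Write $\Phi^i = ((A^i_1,b^i_1), \ldots, (A^i_L,b^i_L))$ with layer widths $d = N^i_0, N^i_1, \ldots, N^i_L$ (where $N^i_0 = d$ for every $i$). For the first layer, since all $\Phi^i$ must ingest the same input $x \in \R^d$, I would stack the first-layer matrices vertically:
\[
A_1 \coloneqq \begin{pmatrix} A^1_1 \\ A^2_1 \\ \vdots \\ A^n_1 \end{pmatrix} \in \R^{(\sum_i N^i_1)\times d}, \qquad b_1 \coloneqq \begin{pmatrix} b^1_1 \\ \vdots \\ b^n_1 \end{pmatrix}.
\]
For every subsequent layer $\ell = 2, \ldots, L$, the inputs to the $n$ ``subnetworks'' are already separated into disjoint coordinate blocks, so I would use a block-diagonal construction:
\[
A_\ell \coloneqq \mathrm{diag}(A^1_\ell, A^2_\ell, \ldots, A^n_\ell), \qquad b_\ell \coloneqq \begin{pmatrix} b^1_\ell \\ \vdots \\ b^n_\ell \end{pmatrix}.
\]
Define $\mathrm{P}(\Phi^1,\ldots,\Phi^n) \coloneqq ((A_1,b_1), \ldots, (A_L,b_L))$.

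\textbf{Step 2 (realization identity, property (i)).} The plan is to prove by induction on $\ell \in \{0, 1, \ldots, L\}$ that the pre-activation vector $x_\ell$ of $\mathrm{P}(\Phi^1,\ldots,\Phi^n)$ on input $x$ equals $(x^1_\ell, \ldots, x^n_\ell)$, where $x^i_\ell$ is the corresponding vector for $\Phi^i$. The base case $\ell = 0$ is trivial. For $\ell = 1$, the vertical stacking gives $A_1 x + b_1 = (A^1_1 x + b^1_1, \ldots, A^n_1 x + b^n_1)$, and componentwise application of $\varrho$ preserves the block structure. For $\ell \geq 2$, block-diagonality of $A_\ell$ decouples the blocks, so $A_\ell x_{\ell-1} + b_\ell = (A^1_\ell x^1_{\ell-1} + b^1_\ell, \ldots, A^n_\ell x^n_{\ell-1} + b^n_\ell)$, and again componentwise $\varrho$ preserves blocks. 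The case $\ell = L$ (without activation) is identical.

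\textbf{Step 3 (verifying (ii)--(iv)).} Property (ii) is immediate: $\mathrm{P}(\Phi^1,\ldots,\Phi^n)$ has exactly $L$ layers by construction. For (iii), vertical stacking and block-diagonal placement neither create nor destroy nonzero entries (off-block entries are exactly zero), and bias vectors are merely concatenated, so $\|A_\ell\|_0 + \|b_\ell\|_0 = \sum_{i=1}^n (\|A^i_\ell\|_0 + \|b^i_\ell\|_0)$; summing over $\ell$ gives the claim. Property (iv) follows because every entry of $A_\ell, b_\ell$ is either zero or equal to an entry of some $A^i_\ell, b^i_\ell$, so the maximum absolute weight is the maximum over $i$ of $\|\Phi^i\|_{\max}$.

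\textbf{Main obstacle.} There is essentially no real obstacle; the construction is standard bookkeeping. The only subtlety worth flagging is that one cannot use the block-diagonal prescription already in layer~$1$, since the $n$ subnetworks share the input $x$ rather than having $n$ disjoint inputs of total dimension $nd$. The vertical-stacking device in layer~$1$ handles this cleanly, after which all remaining layers reduce to the decoupled block-diagonal case.
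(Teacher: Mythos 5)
Your construction — vertical stacking in the first layer and block-diagonal matrices thereafter — is exactly the one used in the paper's proof, and your verification of properties (i)--(iv) is the (straightforward) bookkeeping that the paper leaves implicit with the phrase ``fulfills all the desired properties.'' No issues.
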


\begin{proof} The neural network  \begin{align*}
        \mathrm{P}(\Phi^1,\dots,\Phi^n)\coloneqq \left((\widetilde{A}_1,\widetilde{b}_1),\dots, (\widetilde{A}_L,\widetilde{b}_L) \right),
    \end{align*}
    with 
    \begin{align*}
        \widetilde{A}_1\coloneqq \begin{pmatrix} A_1^1 \\ \vdots \\ A_1^n \end{pmatrix}, \quad \widetilde{b}_1 \coloneqq \begin{pmatrix} b_1^1 \\ \vdots \\ b_1^n \end{pmatrix} \quad \text{ and } \widetilde{A}_\ell\coloneqq \left( \begin{array}{cccc}
A^1_\ell &  &  &    \\
 &A^2_\ell &  &    \\
 &  & \ddots &   \\
 &  &  & A^n_L 
\end{array} \right), \widetilde{b}_\ell \coloneqq \begin{pmatrix} b_\ell^1 \\ \vdots \\ b_\ell^n \end{pmatrix}, \quad \text{ for } 1<\ell\leq L, 
    \end{align*}
    fulfills all the desired properties.
\end{proof} 

\subsection{Approximate Monomials and Multiplication}\label{app:CalcMon}
We first give the proof of Proposition \ref{prop:MonApp}:
    \begin{proof}[Proof of Proposition \ref{prop:MonApp}] Choose $C_0>1$ so that $[x_0-\frac{nB}{C_0},x_0+\frac{nB}{C_0} ]\subset U.$ Moreover, let $\delta\geq C_0$ be arbitrary. 
Define the function
\begin{align*}
    \varrho_{\delta}^r:\R\to \R,~~ x\mapsto \frac{\delta^r}{\varrho^{(m)}(x_0)} \sum_{j=0}^r (-1)^j\binom{r}{j}\cdot \varrho\left(x_0- j\frac{x}{\delta}\right).
\end{align*}
Then $\varrho_{\delta}^r|_{[-B,B]}\in C^{n+1}([-B,B])$.
Using the Taylor expansion and the following identity from~\cite{katsuura2009summations}
\begin{equation}\label{eq:katsuura}
    \sum_{j=1}^r (-1)^{j}\binom{r}{j} j^k= \begin{cases} 
                                    0,\hfill &\text{ if } 1\leq k< r, \\
                                    (-1)^r r!, \hfill \quad &\text{ if } k=r,
                            \end{cases}
\end{equation}
it can easily be shown that $\varrho_{\delta}^r(x)\approx x^r$ for $\delta>0$ sufficiently large. In detail, we have by Taylor's Theorem  (where $\xi_j$ is between $x_0$ and $x_0-j\frac{x}{\delta}$ for $j=1,\dots,r$) that
\begin{align*}
    &\sum_{j=0}^r (-1)^j\binom{r}{j}\cdot \varrho\left(x_0- j\frac{x}{\delta}\right) \\ &= \varrho(x_0)+\sum_{j=1}^r (-1)^j\binom{r}{j}\cdot \left(\sum_{k=0}^r  \frac{\varrho^{(k)}(x_0)}{k!}\left(\frac{-jx}{\delta}\right)^{k} + \frac{\varrho^{(r+1)}(\xi_j)}{(r+1)!}\left(\frac{-(r+1)x}{\delta}\right)^{r+1}\right) \\
    &=\varrho(x_0)+\sum_{k=0}^r\left(\frac{-x}{\delta}\right)^k \frac{\varrho^{(k)}(x_0)}{k!}\sum_{j=1}^r (-1)^{j}\binom{r}{j} j^k +\underbrace{\sum_{j=1}^r (-1)^j \binom{r}{j}  \frac{\varrho^{(r+1)}(\xi_j)}{(r+1)!}\left(\frac{-(r+1)x}{\delta}\right)^{r+1}}_{\eqqcolon r_{\delta}^r(x)}\\
    &=\varrho(x_0)\underbrace{\sum_{j=0}^r (-1)^j\binom{r}{j} }_{=0}+\sum_{k=1}^r\left(\frac{-x}{\delta}\right)^k \frac{\varrho^{(k)}(x_0)}{k!}\underbrace{\sum_{j=1}^r (-1)^{j}\binom{r}{j} j^k}_{\text{use Eq.~\eqref{eq:katsuura}}} +r_{\delta}^r(x)\\
    &=\left(\frac{x}{\delta}\right)^r\rho^{(r)}(x_0) +r_{\delta}^r(x).
\end{align*}
 Hence, for every $k=0,\dots,n$ and every $x\in [-B,B],$ we have 
\begin{align*}
     \left|(\varrho_{\delta}^r)^{(k)}(x)  - (x^r)^{(k)} \right| &= \left| \frac{\delta^r}{\varrho^{(r)}(x_0)} (r_{\delta}^r)^{(k)}(x)\right| \\ &\leq \underbrace{\sum_{j=1}^r  \binom{r}{j} \cdot  \left| \frac{\varrho^{(r+1)}(\xi_j)}{(r+1)!} \right|}_{\leq 2^{n}\|\varrho\|_{C^{n+1}(U)}} \cdot \underbrace{\left|\frac{\delta^r}{\varrho^{(r)}(x_0)} \left(\frac{-(r+1)}{\delta}\right)^{r+1}\right|}_{\leq \frac{(n+1)^{n+1}}{\delta\min_{i=0,\dots,n}|\varrho^{(i)}(x_0)|}} \cdot \underbrace{\vphantom{\left|\frac{\delta^r}{\varrho^{(r)}(x_0)} \left(\frac{-(r+1)}{\delta}\right)^{r+1}\right|}\left| (x^{r+1})^{(k)}\right|}_{\leq n! \max\{B,1\}^{n+1}} \\
    &\leq  2^n\cdot (n+1)^{n+1}n!\cdot \frac{\|\varrho\|_{C^{n+1}(U)}}{\min_{i=0,\dots,n}|\varrho^{(i)}(x_0)|} \max\{B,1\}^{n+1} \cdot \frac{1}{\delta} \eqqcolon \frac{C'(B,n,\varrho)}{\delta}.
\end{align*}
This implies, that there exists some $C \geq \max\{C_0,{C'(B,n,\varrho)}\}$ such that for every $\epsilon\in (0,1)$ and the neural network $\Phi^r_\epsilon \coloneqq \left((A_1,b_1),(A_2,b_2) \right)$ with 
\begin{align*}
    A_1&\coloneqq \left(0,-\frac{\eps}{C},\dots,-\frac{r\eps}{C}\right)^T \in \R^{r+1,1}, \\
    b_1&\coloneqq  (x_0,\dots,x_0)^T\in \R^{r+1}, \\
    A_2 &\coloneqq \frac{C^r}{\eps^r \varrho^{(r)}(x_0)}\left((-1)^0 \binom{r}{0},(-1)^1 \binom{r}{1},\dots,(-1)^r \binom{r}{r}   \right)\in \R^{1,r+1}, \\
    b_2 &\coloneqq 0\in\R,
\end{align*}
fulfills 
\begin{align*}
    \left\|\Realization(\Phi_\eps^r) -x^r\right\|_{C^n([-B,B]}\leq \epsilon.
\end{align*}
Moreover, $L\left(\Phi_\eps^r\right)=2$ and $M\left( \Phi_\eps^r\right) \leq 3(r+1)$.

Additionally, for every $k=0,\dots,r$ and for every $x\in [-B,B]$ we have 
\begin{align*}
    \left|\left(\Realization(\Phi_\eps^r)\right)^{(k)}(x) \right| \leq \left\|\left(\Realization(\Phi_\eps^r)\right)^{(k)} -(x^r)^{(k)}\right\|_{C^n([-B,B])} + \left|(x^r)^{(k)}\right| \leq \epsilon + \frac{n!}{(n-k)!} |\max\{1,B\}|^{r-k}. 
\end{align*}
Finally, for all $k=r+1,\dots,n$ we have that 
\begin{align*}
    \left|\left(\Realization(\Phi_\eps^r)\right)^{(k)}(x) \right| \leq \left\|\left(\Realization(\Phi_\eps^r)\right)^{(k)} -(x^r)^{(k)}\right\|_{C^n([-B,B])} + \left|(x^r)^{(k)}\right| \leq \epsilon + 0=\epsilon. 
\end{align*}
This completes the proof.
\end{proof}

Based on Proposition \ref{prop:MonApp}, we are now in a position to introduce neural networks that approximate the map which multiplies two real inputs.
\begin{corollary}\label{cor:approximate_multiplication}
	Let $\varrho\in W^{j,\infty}_{\mathrm{loc}}(\R)$ for some $j\in\N_0$ and $x_0\in \R$ such that $\varrho$ is three times continuously differentiable in a neighborhood of some $x_0\in\R$ and $\varrho''(x_0)\neq 0.$  	
	Let $B> 0$, then there exists a constant $C=C(B,\varrho)>0$ such that for every $\eps \in \epsin$, there is a neural network $\apmult$ with two-dimensional input and one-dimensional output that satisfies the following properties:
	\begin{enumerate}[(i)]
		\item \label{item:network_approximation}$\norm{R_\rho(\apmult)(x,y) - xy}_{\Wkp[j][\infty][(-B,B)^2;dxdy]}\leq \epsilon$;
		\item \label{item:network_derivative}$\norm{R_\rho(\apmult_\eps)}_{\Wkp[j][\infty][(-B,B)^2]}\leq C$;
		\item \label{item:network_complexity_apmult} $L(\apmult)=2$ and $M(\apmult)\leq C$;
		\item\label{item:mult_bounded_weights} $\bweights{\apmult}\leq C\eps^{-2}$.
	\end{enumerate}
\end{corollary}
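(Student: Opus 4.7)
\textbf{Proof plan for Corollary \ref{cor:approximate_multiplication}:}

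The strategy is the standard polarization identity trick from~\cite{yarotsky2017error}: one has $xy = \tfrac{1}{4}\bigl((x+y)^2 - (x-y)^2\bigr)$, so it suffices to approximate the squaring map on $[-2B,2B]$ and combine two copies of the resulting network via affine pre- and post-processing. First I would invoke Proposition~\ref{prop:MonApp} with $r=2$, $n=\max\{j,2\}$ and domain bound $2B$, at an accuracy $\delta = \eps/C_1$, where $C_1=C_1(B,\varrho,j)$ is chosen to absorb the constants produced below. This yields a two-layer network $\Phi^2_\delta$ with at most $9$ nonzero weights, with weights bounded in absolute value by $C \delta^{-2}$, and satisfying $\|R_\varrho(\Phi^2_\delta) - z^2\|_{C^k([-2B,2B])} \leq \delta$ for $k=0,1,2$, together with the higher-derivative bounds $|R_\varrho(\Phi^2_\delta)|_{W^{k,\infty}([-2B,2B])} \leq \delta$ for $k=3,\dots,n$.

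Next I would assemble $\apmult$ explicitly. Writing $\Phi^2_\delta = ((A_1,b_1),(A_2,b_2))$, define the two-layer network with first-layer matrix obtained by stacking $A_1\cdot(1,\phantom{-}1)$ and $A_1\cdot(1,-1)$ (two copies of the inner weights pre-composed with the affine maps $(x,y)\mapsto x\pm y$) and first-layer bias obtained by stacking two copies of $b_1$, and second layer $(\tfrac{1}{4}A_2,\,-\tfrac{1}{4}A_2)$ with zero bias. By definition, the realization satisfies
\[
    R_\varrho(\apmult)(x,y) = \tfrac{1}{4}\bigl[R_\varrho(\Phi^2_\delta)(x+y) - R_\varrho(\Phi^2_\delta)(x-y)\bigr].
\]
This gives properties (iii) and (iv) at once: $L(\apmult)=2$, $M(\apmult) \leq 2\cdot 9 = 18 \leq C$, and the maximum weight is bounded by (a constant times) $\|A_1\|_\infty + \|A_2\|_\infty \leq C \delta^{-2} \leq C\eps^{-2}$, since the pre/post factors of $\pm 1,\pm\tfrac{1}{4}$ are $O(1)$.

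For (i), subtract $xy = \tfrac{1}{4}((x+y)^2 - (x-y)^2)$ to get
\[
    R_\varrho(\apmult)(x,y) - xy = \tfrac{1}{4}[R_\varrho(\Phi^2_\delta)(x+y) - (x+y)^2] - \tfrac{1}{4}[R_\varrho(\Phi^2_\delta)(x-y) - (x-y)^2],
\]
and apply the chain rule together with Corollary~\ref{cor:composition_norm}: since $(x,y)\mapsto x\pm y$ is linear with bounded derivatives, the $W^{j,\infty}((-B,B)^2)$ norm of each bracket is bounded by a constant (depending on $d=2,j$) times $\|R_\varrho(\Phi^2_\delta)(z) - z^2\|_{W^{j,\infty}((-2B,2B))} \leq C\delta$. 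Choosing $C_1$ large enough yields the bound by $\eps$. Property (ii) follows from the triangle inequality: $\|R_\varrho(\apmult)\|_{W^{j,\infty}} \leq \|R_\varrho(\apmult) - xy\|_{W^{j,\infty}} + \|xy\|_{W^{j,\infty}((-B,B)^2)} \leq \eps + C(B,j)$.

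The main technical subtlety lies in invoking the higher-derivative bounds of Proposition~\ref{prop:MonApp}(ii): for $k > r = 2$, one has $(z^2)^{(k)} = 0$, so the error on the $k$-th derivative is controlled entirely by the $k$-th derivative of $R_\varrho(\Phi^2_\delta)$, which in turn requires sufficient smoothness of $\varrho$ near $x_0$ to apply Proposition~\ref{prop:MonApp} with $n = j$. Once this is in place, the bookkeeping of absorbing the affine pre/post maps into the two-layer network and tracking the constants through the chain rule is routine.
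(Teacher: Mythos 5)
Your proposal matches the paper's proof: both use the polarization identity $xy = \tfrac14\bigl((x+y)^2 - (x-y)^2\bigr)$, invoke Proposition~\ref{prop:MonApp} with $r=2$ to approximate the squaring map on $[-2B,2B]$, and control the composition error via Corollary~\ref{cor:composition_norm}; the only difference is that you write out the assembled two-layer network's weight matrices explicitly, whereas the paper expresses the identical network via its concatenation and parallelization calculus as $\apmult_\eps = ((\tfrac14,-\tfrac14),0)\conc\mathrm{P}(\Phi^2_{\widetilde\eps},\Phi^2_{\widetilde\eps})\conc(M,0)$ with $M$ the $2\times2$ matrix implementing $(x,y)\mapsto(x+y,x-y)$. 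Your closing observation is a good catch: invoking Proposition~\ref{prop:MonApp} with $n=\max\{j,2\}$ when $j>2$ requires $\varrho\in C^{j+1}$ near $x_0$, whereas the corollary only assumes $C^3$ near $x_0$; the paper's own proof tacitly makes this stronger assumption (it uses the Proposition~\ref{prop:MonApp} estimates for all $k\le j$), which is harmless for the activation functions it targets (they are analytic, or at least $C^{j+1}$, near a suitable $x_0$) but is a mild mismatch between the corollary's stated hypotheses and what the argument actually uses.
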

\begin{proof}
    Let $C$ be the constant from Corollary~\ref{cor:composition_norm} and set $\widetilde \eps\coloneqq \nicefrac{\epsilon}{2C}$. Proposition~\ref{prop:MonApp} yields that there exists a neural network $\Phi^2_{\widetilde\eps}$ with 2 layers and at most $9$ nonzero weights such that  for all $k\in\{0,\dots,j\}$ we have
    $$
        \left|\Realization(\Phi^2_{\widetilde\eps}) -x^2\right|_{W^{k,\infty}([-2B,2B];dx)} \leq \widetilde\eps.
    $$
   
    As in~\cite{yarotsky2017error}, we make use of the polarization identity
    \begin{align*}
        xy= \frac{1}{4} \left((x+y)^2-(x-y)^2 \right)\quad\text{for }x,y\in\R.
    \end{align*}    
    In detail, we define the neural network 
    $$\apmult_{\eps}\coloneqq \left(\left(\frac{1}{4},\frac{-1}{4}\right),0 \right) \conc \mathrm{P}\left(\Phi^{2}_{\widetilde\eps},\Phi^{2}_{\widetilde\eps} \right) \conc \left(\begin{pmatrix} 1 & 1 \\ 1 & -1 \end{pmatrix},0 \right),$$
    which fulfills for all $(x,y)\in \R^2$ that 
    \begin{align*}
        \Realization(\apmult_{\eps})(x,y) = \frac{1}{4}\left(\Realization\left(\Phi^{2}_{\widetilde \eps}\right)(x+y)- \Realization\left(\Phi^{2}_{\widetilde \eps}\right)(x-y)\right).
    \end{align*}
    Now, setting $f:[-2B,2B]\to \R,x\mapsto x^2$ as well as 
    $$
        u:[-B,B]^2\to[-2B,2B],~(x,y)\mapsto x+y\quad \text{and}\quad
          v:[-B,B]^2\to[-2B,2B],~(x,y)\mapsto x-y, 
    $$
    we see that for all $(x,y)\in [-B,B]^2$ there holds
    $
        xy=\nicefrac{1}{4} \left( f\circ u(x,y) - f\circ  v(x,y)\right).
    $
    We estimate 
    \begin{align*}
        &\left\| \Realization(\apmult_\eps)(x,y) - xy\right\|_{W^{k,\infty}([-B,B]^2;dxdy)} \\ &= \frac{1}{4}\left\| \Realization\left(\Phi^{2}_{\widetilde \eps}\right)\circ u- \Realization\left(\Phi^{2}_{\widetilde \eps}\right)\circ v-\left( f\circ u - f\circ  v\right) \right\|_{W^{k,\infty}([-B,B]^2)} \\
        &\leq \frac{1}{4} \left\| \Realization\left(\Phi^{2}_{\widetilde \eps}\right)\circ u- f\circ u  \right\|_{W^{k,\infty}([-B,B]^2)}
        +  \frac{1}{4}\left\| \Realization\left(\Phi^{2}_{\widetilde \eps}\right)\circ v-  f\circ v  \right\|_{W^{k,\infty}([-B,B]^2)},
    \end{align*}
    and directly see for $k=0$ that
    \begin{equation*}
        \left|\Realization(\apmult_\eps)(x,y) - xy\right|_{W^{0,\infty}([-B,B]^2;dxdy)} \leq \frac{2}{4}\norm{\Realization\left(\Phi^{2}_{\widetilde \eps}\right)-x^2}_{\Lpd[\infty][[-B,B]^2]}\leq\frac{1}{2}\widetilde \eps \leq \epsilon.
    \end{equation*}
    
    Now, we proceed with the case $k\in\{1,\dots,j\}$. We first note that 
    \begin{align*}
        |u|_{W^{0,\infty}([-B,B]^2)} &= |v|_{W^{0,\infty}([-B,B]^2)} = 2B, \\
        |u|_{W^{1,\infty}([-B,B]^2)} &=|v|_{W^{1,\infty}([-B,B]^2)} = 1, \\
         |u|_{W^{k,\infty}([-B,B]^2)} &=|v|_{W^{k,\infty}([-B,B]^2)} = 0,~\text{for all } k\geq 2.
    \end{align*}
    The composition rule from Corollary~\ref{cor:composition_norm} then yields that 
      \begin{align*}
        \left|\Realization(\apmult_\eps)(x,y) - xy\right|_{W^{k,\infty}([-B,B]^2;dxdy)} \leq 2C \sum_{i=1}^k \left|\Realization\left(\Phi^{2}_{\widetilde \eps}\right)-x^2 \right|_{W^{i,\infty}([-2B,2B];dx)} \left|u \right|_{W^{1,\infty}([-B,B]^2)}^i \leq 2C\widetilde \eps =\epsilon.
    \end{align*}
    and, thus, claim (i) is shown.
    Finally, we have for $k\in\{0,\dots,j\}$
    \begin{align*}
        \left|\Realization(\apmult_\eps) \right|_{W^{k,\infty}([-B,B]^2)} \leq \left|\Realization(\apmult_\eps)-xy \right|_{W^{k,\infty}([-B,B]^2;dxdy)}  + \left|xy \right|_{W^{k,\infty}([-B,B]^2;dxdy)}  \leq C_1,
    \end{align*}
    for a constant $C_1=C_1(B)>0$, yielding (ii). Claim (iii),(iv) immediately follow from the construction of $\apmult$ in combination with Proposition~\ref{prop:MonApp} and Lemma \ref{lemma:one_layer_concat}.(i).
    \end{proof}

Another statement that can be deduced from Proposition \ref{prop:MonApp} is connected to the construction of neural networks which approximate the identity on $\R^d.$
\begin{corollary}\label{cor:ApproxIdent}
    Let $\varrho:\R\to\R$ be  such that $\varrho$ is twice times continuously differentiable in a neighborhood of some $x_0\in\R$ and $\varrho'(x_0)\neq 0.$ fulfill the assumptions of Proposition \ref{prop:MonApp} for some $n=2,$ for $r=1$ and  assume that for some $k\leq n$ we have that $\varrho\in W^{k,\infty}_{\mathrm{loc}}(\R)$. Then, for every $B>0,$ $d\in\N,$ for every $L\in\N_{\geq 2}$ and for every $\eps\in(0,1)$ there exists a constant $C=C(B,\varrho)>0$ and a neural network $\Phi_\eps^{L,B,d}$ with $d$-dimensional input, $d$-dimensional output and the following properties:
    \begin{enumerate}
        \item[(i)] $\left\|\Realization(\Phi^{L,B,d}_\epsilon)-x\right\|_{W^{k,\infty}([-B,B]^d;\R^d)} \leq \epsilon$;
        \item[(ii)] $\norm{\Realization(\Phi^{L,B,d}_\epsilon)}_{W^{k,\infty}([-B,B]^d;\R^d)}\leq C\max\{1,B\}
        $;
        \item[(iii)] $L\left(\Phi^{L,B,d}_\epsilon\right)=L,$ as well as $M\left( \Phi^{L,B,d}_\epsilon\right) \leq 4dL-3d;$
        \item[(iv)]  $\bweights*{\Phi^{L,B,d}_\epsilon}\leq CL\epsilon^{-1}.$
    \end{enumerate}
\end{corollary}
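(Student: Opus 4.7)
The plan is to build $\Phi^{L,B,d}_\eps$ by constructing a one-dimensional $L$-layer identity approximator via concatenation of shallow monomial-approximators, and then parallelizing across the $d$ coordinates using Lemma~\ref{lem:ParalSame}. Concretely, I first invoke Proposition~\ref{prop:MonApp} with $n=2$, $r=1$ on the interval $[-2B,2B]$ and accuracy $\delta>0$ to be fixed later. This yields a two-layer network $\Psi_\delta = ((A_1,b_1),(A_2,b_2))$ with $\|\Realization(\Psi_\delta) - x\|_{W^{k,\infty}([-2B,2B])} \leq \delta$, at most $6$ nonzero weights, and $\bweights{\Psi_\delta} \leq C\delta^{-1}$. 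Inspection of the explicit construction in the proof of Proposition~\ref{prop:MonApp} reveals that the entries of $A_1$ have magnitude $O(\delta)$ while those of $A_2$ have magnitude $O(\delta^{-1})$, so their matrix product is only of order $O(1)$.

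Next, I form $\widetilde\Psi_\delta^{L} \coloneqq \Psi_\delta \conc \cdots \conc \Psi_\delta$ with $L-1$ copies, producing an $L$-layer network. Each concatenation merges the terminal $A_2$ of one copy with the initial $A_1$ of the next; the resulting merged matrix $A_1 A_2$ has only two nonzero entries, each of bounded magnitude, plus the merged bias $A_1 b_2 + b_1 = b_1$ with two nonzero entries. A direct accounting then gives $M(\widetilde\Psi_\delta^{L}) = 4L-3$, with the only large-magnitude weights sitting in the very first $A_1$ and the very last $A_2$, so $\bweights{\widetilde\Psi_\delta^{L}} \leq C\delta^{-1}$. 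Parallelizing $d$ copies via Lemma~\ref{lem:ParalSame} multiplies the weight count by $d$ and yields the desired bound $4dL-3d$ in (iii).

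For the error estimate, I write $f_\delta \coloneqq \Realization(\Psi_\delta) = \mathrm{id} + e_\delta$ with $\|e_\delta\|_{W^{k,\infty}([-2B,2B])}\leq \delta$, so that $\Realization(\widetilde\Psi_\delta^L) = f_\delta^{\circ (L-1)}$. I then proceed by induction on the number of compositions, applying the chain-rule estimate of Corollary~\ref{cor:composition_norm} together with the product rule of Lemma~\ref{lemma:product_rule_bound}. Since the derivatives of $f_\delta$ differ from those of the identity by at most $\delta$ in each order up to $k$, a single additional composition contributes only an $O(\delta)$ increment to every derivative of the running composition, and iterates remain within $[-2B,2B]$ provided $\delta \leq B/(L-1)$. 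This yields $\|f_\delta^{\circ (L-1)} - \mathrm{id}\|_{W^{k,\infty}([-B,B])} \leq C'(L-1)\delta$. Setting $\delta \coloneqq \eps/(C'(L-1))$ then immediately gives claim (i) and the weight magnitude bound (iv). Claim (ii) follows from the triangle inequality $\|\Realization(\Phi^{L,B,d}_\eps)\|_{W^{k,\infty}} \leq \eps + \|\mathrm{id}\|_{W^{k,\infty}} \leq C\max\{1,B\}$.

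The main obstacle is preventing the error from blowing up exponentially through $L-1$ compositions. The key observation that rescues the argument is that the Jacobian of $f_\delta$ is $\delta$-close to the identity, so the chain-rule expansion in Corollary~\ref{cor:composition_norm} produces only a linear-in-$L$ accumulation of error rather than a geometric one; a comparable cancellation in the merged products $A_1 A_2$ is what prevents the weight magnitudes from exploding as $L$ grows, so that $\bweights{\widetilde\Psi_\delta^L}$ scales as $\delta^{-1} = O(L\eps^{-1})$ instead of exponentially.
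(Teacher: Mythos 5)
Your construction is essentially the same as the paper's: both form the one-dimensional identity approximator by concatenating $L-1$ copies of the degree-one network from Proposition~\ref{prop:MonApp}, note that the merged weight products in the interior layers stay $O(1)$ so only the outermost layers carry the $O(\delta^{-1})$ weights, and then parallelize across the $d$ coordinates via Lemma~\ref{lem:ParalSame}. The only cosmetic difference is that the paper instantiates Proposition~\ref{prop:MonApp} on $[-B-1,B+1]$ with per-copy accuracy $\eps/L$, which keeps the iterates inside the enlarged interval for all $\eps\in(0,1)$ and all $B>0$, whereas your choice of $[-2B,2B]$ forces the side constraint $\delta\leq B/(L-1)$ that can fail for small $B$; the paper's variant avoids this.
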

\begin{proof}
W.l.o.g., we assume that $d=1.$ The other cases follow  from a minor modification of the parallelization of neural networks with the same number of layers. Let $\Phi^1_{\eps/L}$ be the neural network from Proposition \ref{prop:MonApp} for $B= B+1.$ We define $\Phi_\eps^{L,B,d} \coloneqq \Phi^1_{\eps/L}\conc \dots\conc \Phi^1_{\eps/L},$
where we perform $L-2$ concatenations. It is easy to see that $\Phi_\eps^{L,B,d}=\left((A_1,b_1),(A_2,b_2),\dots,(A_L,b_L) \right),$ where 
\begin{align*}
    A_1&= \left(0,-\frac{\eps}{LC}\right)^T \in \R^{2,1}, \\
    b_1&=  (x_0,x_0)^T\in \R^{2}, \\
    A_\ell &= \begin{pmatrix} 0 & 0\\ -\frac{1}{\varrho'(x_0)} &\frac{1}{\varrho'(x_0)} \end{pmatrix} \in \R^{2,2},\quad \text{ for } \ell=2,\dots,L-1, \\
    b_\ell &= (x_0,x_0)^T\in \R^2,\quad \text{ for } \ell=2,\dots,L-1, \\ 
    A_L &= \frac{LC}{\eps \varrho'(x_0)}\left(1,-1    \right)\in \R^{1,2}, \\
    b_L &= 0\in\R,
\end{align*}
and where $C>0$ is a suitable constant provided by Proposition \ref{prop:MonApp}. By Proposition \ref{prop:MonApp} we also have that $\Realization(\Phi^1_{\eps/L})(x)\in[-B-\eps/L,B+\eps/L]$ for all $x\in[-B,B]$ as well as
\begin{align*}
    \left\|\Realization(\Phi^1_{\eps/L})-x \right\|_{W^{k,\infty}([-B,B])}\leq \frac{\eps}{L}. 
\end{align*}
 Iterating this argument shows that $ \Realization(\Phi_\eps^{L,B})(x)\in [-B-\eps,B+\eps]$ for all $x\in[-B,B]$ and that
\begin{align*}
   \left\| \Realization(\Phi_\eps^{L,B,d})-x\right\|_{W^{k,\infty}([-B,B])} \leq \eps. 
\end{align*}
The other properties follow immediately from (i) in combination with the definition of $\Phi_\eps^{L,B,d}.$
\end{proof}

Before we continue, let us have a closer look at the properties of the concatenation of two neural networks in the following special cases. 
\begin{lemma}\label{lemma:one_layer_concat}
Let $\Phi$ be a neural network with $m$-dimensional output. 
\begin{enumerate}[(i)]
    \item\label{item:ConcVector} If $a\in\R^{1\times m}$, then,
\[
M(((a,0))\conc\Phi)\leq M(\Phi)\quad \text{and}\quad \|((a,0))\conc\Phi)\|_{\max}\leq m\|\Phi\|_{\max}\max_{i=1,\ldots,m}a_i .
\]

\item\label{item:ConcId} Let $\Phi^{L,B,m}_\eps$ be the approximate identity network from Corollary~\ref{cor:ApproxIdent}. Then, for some constant $C=C(B,\varrho)$ there holds 
\[
M(\Phi^{L,B,m}_\eps\conc\Phi)\leq M(\Phi)+M(\Phi^{L,B,m}_\eps) \quad  \text{and}\quad \|(\Phi^{L,B,m}_\eps\conc\Phi)\|_{\max}\leq C \max\{\|\Phi\|_{\max},\eps^{-1}\}.
\]
    \item\label{item:ConcMult}  Let $\widetilde{\times}$ be the approximate multiplication network from Corollary~\ref{cor:approximate_multiplication}. If $m=2,$ then, for some constant $C=C(B,\varrho)$ there holds 
\[
M(\widetilde{\times}\conc\Phi)\leq CM(\Phi)  \quad \text{and}\quad \|\widetilde{\times}\conc\Phi\|_{\max}\leq C \max\{\|\Phi\|_{\max},\eps^{-2}\} .
\]
\end{enumerate}
\end{lemma}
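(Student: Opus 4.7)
The plan is to verify each part by direct inspection of the concatenation formula, exploiting the particular sparsity of the first layer of the network sitting on the left of the concatenation.

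For part~(i), writing $\Phi=((A_1,b_1),\ldots,(A_L,b_L))$ and using the definition of $\conc$, the network $((a,0))\conc\Phi$ coincides with $\Phi$ in its first $L-1$ layers, while its last layer is $(aA_L,ab_L)$. A nonzero entry of the row vector $aA_L$ forces the corresponding column of $A_L$ to contain a nonzero entry, hence $\|aA_L\|_0\leq\|A_L\|_0$; similarly $\|ab_L\|_0\leq 1\leq\|b_L\|_0$ if $b_L\neq 0$ (both vanish otherwise). Summing over layers yields the count bound. The max-norm bound follows from $|(aA_L)_j|\leq \sum_i|a_i||(A_L)_{i,j}|\leq m(\max_i|a_i|)\|\Phi\|_{\max}$ and the analogous estimate for $ab_L$, together with the observation that the first $L-1$ layers are unchanged.

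For part~(ii), I would unwind the explicit structure of $\Phi^{L,B,m}_\eps$ from the proof of Corollary~\ref{cor:ApproxIdent}: its first-layer matrix $A_1^{\mathrm{id}}\in\R^{2m\times m}$ is block-diagonal with exactly one nonzero entry per row (a scaled $(0,-\eps/(LC))^\top$), and its first-layer bias $b_1^{\mathrm{id}}\in\R^{2m}$ has all $2m$ entries equal to $x_0$. After concatenation, the combined first layer is $(A_1^{\mathrm{id}}A_L^\Phi,\,A_1^{\mathrm{id}}b_L^\Phi+b_1^{\mathrm{id}})$; the one-nonzero-per-row structure of $A_1^{\mathrm{id}}$ ensures each row of the product $A_1^{\mathrm{id}}A_L^\Phi$ is a scalar multiple of a single row of $A_L^\Phi$, so $\|A_1^{\mathrm{id}}A_L^\Phi\|_0\leq\|A_L^\Phi\|_0$, and the combined bias has at most $2m=\|b_1^{\mathrm{id}}\|_0$ nonzero entries. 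Tallying the weight counts across the layers produces $M(\Phi^{L,B,m}_\eps\conc\Phi)\leq M(\Phi)+M(\Phi^{L,B,m}_\eps)$. The same sparsity gives $\|A_1^{\mathrm{id}}A_L^\Phi\|_{\max}\leq\|A_1^{\mathrm{id}}\|_{\max}\|\Phi\|_{\max}$, and combined with $\bweights{\Phi^{L,B,m}_\eps}\leq CL\eps^{-1}$ from Corollary~\ref{cor:ApproxIdent}~(iv) and the constant $|x_0|$ in $b_1^{\mathrm{id}}$, yields the stated max-norm bound $C\max\{\|\Phi\|_{\max},\eps^{-1}\}$.

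Part~(iii) is analogous. From the explicit construction $\apmult=((1/4,-1/4),0)\conc\mathrm{P}(\Phi^2_{\widetilde\eps},\Phi^2_{\widetilde\eps})\conc(\begin{pmatrix}1&1\\1&-1\end{pmatrix},0)$ in the proof of Corollary~\ref{cor:approximate_multiplication}, the first-layer matrix $A_1^{\apmult}\in\R^{6\times 2}$ has at most two nonzero entries per row, the first-layer bias is $(x_0,\ldots,x_0)^\top\in\R^6$, and every entry of $A_1^{\apmult}$ is bounded by a constant (of order $\widetilde\eps$). Since $A_L^\Phi$ has only two rows, each row of $A_1^{\apmult}A_L^\Phi$ is a linear combination of those two rows and therefore contributes at most $\|A_L^\Phi\|_0$ nonzeros, giving $\|A_1^{\apmult}A_L^\Phi\|_0\leq 4\|A_L^\Phi\|_0$; the new bias is a $6$-vector and the remaining layer of $\apmult$ carries $O(1)$ weights, so $M(\apmult\conc\Phi)\leq CM(\Phi)$. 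The max-norm bound follows from $\|A_1^{\apmult}A_L^\Phi\|_{\max}\leq 2\|A_1^{\apmult}\|_{\max}\|\Phi\|_{\max}\leq C\|\Phi\|_{\max}$, combined with $\bweights{\apmult}\leq C\eps^{-2}$ from Corollary~\ref{cor:approximate_multiplication}~(iv) controlling the remaining weights. The whole argument is bookkeeping; the only subtle point is that the sparsity-preserving bounds on $\|A_1^{\mathrm{id}}A_L^\Phi\|_0$ and $\|A_1^{\apmult}A_L^\Phi\|_0$ rely on the specific first-layer structure read off from the proofs of Corollaries~\ref{cor:ApproxIdent} and~\ref{cor:approximate_multiplication}, rather than from the corollary statements themselves.
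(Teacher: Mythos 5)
Your argument is correct and takes essentially the same approach as the paper's proof: inspect the concatenation formula and exploit the sparse (at-most-one-nonzero-per-row, block-diagonal) structure of the left network's first layer, which is exactly what the paper does in its proof of (ii) and invokes implicitly for (iii). You supply considerably more detail than the paper (which cites a reference for (i), gives a two-sentence argument for (ii), and declares (iii) "similar"), but the underlying bookkeeping is identical.
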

\begin{proof}
 For the first part of the proof of (i), see \cite{raslan2019parametric}. The second part is clear.
 
 From now on, let $\Phi=((A_1,b_1),\dots,(A_{L(\Phi)},b_{L(\Phi)})).$

 For the proof of (ii), let $\Phi^{L,B,m}_\eps = ((A_1^{\mathrm{id}},b_1^{\mathrm{id}}),\dots,(A_L^{\mathrm{id}},b_L^{\mathrm{id}}))$ and recall that 
 
 $\Phi^{L,B,m}_\eps\conc\Phi =((A_1,b_1),\dots,(A_{L(\Phi)-1},b_{L(\Phi)-1}),(A_1^{\mathrm{id}}A_{L(\Phi)}, A_1^{\mathrm{id}}b_{L(\Phi)}+b_1^{\mathrm{id}}),(A_2^{\mathrm{id}},b_2^{\mathrm{id}}),\dots,(A_L^{\mathrm{id}},b_L^{\mathrm{id}})).$ Hence, in order to proof (ii), we only need to examine
 $(A_1^{\mathrm{id}}A_{L(\Phi)}, A_1^{\mathrm{id}}b_{L(\Phi)}+b_1^{\mathrm{id}})$. From the construction of $\Phi^{L,B,m}_\eps$ we have that $\|A_1^{\mathrm{id}}\|_0 = m$ and that $A_1^{\mathrm{id}}$ has block diagonal structure. Additionally, all entries of $A_1^{\mathrm{id}}$ are bounded in absolute value by $\frac{\eps}{L\widetilde{C}}\leq 1$ for some $\widetilde{C}\geq 1.$ From this, the claim follows. 
 
 The proof of (iii) can be done in a similar manner as the proof of (ii). 
\end{proof}

\section{Proof of Proposition \ref{prop:main}}\label{app:ProofMainProp}

In this section we provide the proofs of the statements of Section \ref{sec:main} as well as additional auxiliary statements which together lead to the proof of Proposition \ref{prop:main}. Appendix \ref{app:Bumps} is concerned with the proof of Lemma \ref{prop:partition_of_unity} which establishes the conditions of the PU. Appendix \ref{app:ApproxLocal}, which contains the proof of Lemma \ref{lemma:polynomial_approximation} which shows that we are in a position to efficiently approximate $f\in\mathcal{F}_{n,d,p}$ by sums of polynomials multiplied with the functions from the PU. Appendix \ref{app:ApproxLocPol} in turn shows that these sums of localized polynomials can be approximated by neural networks. Appendix \ref{app:PutTogether} concludes the proof of Proposition \ref{prop:main}. 

\subsection{Approximate Partition of Unity}
We start with the proof of Lemma \ref{prop:partition_of_unity} which establishes the properties of the exponential (respectively polynomial, exact) $(j,\tau)$-PU.

\begin{proof}[Proof of Lemma \ref{prop:partition_of_unity}] \label{app:Bumps} 
  For the proof of the properties (\ref{item:pou_derivativeGeneralSigmoid}) and (\ref{item:pou_suppGeneralSigmoid}), we will always assume w.l.o.g. that $m=0$ unless stated otherwise. Moreover, we only give the proof for the case of an exponential PU. The other cases follow in essentially the same way with some  simplifications.
 
  \textbf{ad (\ref{item:pou_derivativeGeneralSigmoid}):} First of all, assume that $d=1.$ For \underline{$\tau=0$} and $j=0$ this follows directly from the boundedness of $\rho$. For \underline{$\tau=1$} and $j=0$, we have that $\varrho$ is Lipschitz continuous, and, thus,
  \begin{align*}
      \left|\phi_0^s(x)\right| &\leq \frac{1}{s(B-A)}\left|\varrho(3sNx+2s)-\varrho(3sNx+s) \right|+ \frac{1}{s(B-A)}\left|\varrho(3sNx-s)-\varrho(3sNx-2s) \right| \\
      &\leq 2\frac{\mathrm{Lip}(\varrho)\cdot s}{s(B-A)}= 2\frac{\mathrm{Lip}(\varrho)}{(B-A)}.
  \end{align*} 
  For \underline{$\tau\in\{0,1\}$} and $j\geq 1$ this follows from the case $j=0$ together with $\rho'\in\Wkp[j-1][\infty][\R]$ and the chain rule.
  
  Now, let $d\in\N$ be arbitrary. Since we will need it in the proof of (\ref{item:pou_suppGeneralSigmoid}), we prove the following more general statement (Statement (\ref{item:pou_derivativeGeneralSigmoid}) follows by considering $I=\{1,\dots,d\}$). Moreover, we will prove this statement only for $k\leq\min\{j,2\},$ since the rest of the proof can be done in exactly the same way by exploiting the tensor structure of $\phi_m^s.$
\vspace{.75em}
\begin{adjustwidth}{3em}{3em}
\textit{Let $I\subset \{1,\dots,d\}$ be arbitrary. Moreover, for  $m\in\{0,\dots,N\}^{|I|}$ we define $\phi_{m,I}^s:\R^{|I|}\to \R,x\mapsto  \prod_{1\leq l\leq  |I|} \psi^s\left(3N\left(x_l-\frac{m_l}{N}\right)\right)$ as well as $\phi_m^s\coloneqq \phi^s_{m,I},$ if $I=\{1,\dots,d\}$. Then for $k\in \{0,\dots,j\}$ it holds that
        \begin{equation*}
            \left|\phi_{m,I}^s \right|_{W^{k,\infty}(\R^{|I|})} \leq C^{|I|}\cdot N^k\cdot s^{\max\{0,k-\tau\}}.
        \end{equation*}}
\end{adjustwidth}
\vspace{.75em}

\noindent It is clear that by the definition of $\phi^s_{m,I}$ and what we have shown for $d=1$ that for $k=0$ there holds 
\begin{align}\label{eq:WinftyIndex}
    \left|\phi^s_{m,I} \right|_{W^{0,\infty}(\R^{|I|})} \leq C^{|I|}.
\end{align}
Now, let $i\in I$ be arbitrary. Then, by using the tensor product structure of $\phi^s_{m,I}$ in combination with what we have shown before for $d=1$, for the case $k=1$ and \eqref{eq:WinftyIndex} for $I'\coloneqq I\setminus\{i\}$ we obtain for a.e. $x\in \R^{|I|}$ 
\begin{align*}
    \left|\frac{\partial}{\partial x_i} \phi^s_{m,I}(x) \right| &= \left| \phi^s_{m,I'}(x_1,\dots,x_{i-1},x_{i+1},\dots,x_{|I|}) \right|\cdot \left|\left(\psi^s\left(3N\left(\cdot-\nicefrac{m_i}{N}\right)\right) \right)'(x_i) \right| \\
    &\leq C^{|I|-1}\cdot CN =C^{|I|}N s^{\max\{0,k-\tau\}}
\end{align*}
which implies that $|\phi^s_{m,I} |_{W^{1,\infty}(\R^{|I|})} \leq C^{|I|}N$.

Finally, let additionally be $r\in I$ be arbitrary. If $i=r$ then we have that (by using \eqref{eq:WinftyIndex} in combination with what we have shown for $d=1$) that 
\begin{align*}
    \left|\frac{\partial^2}{\partial x_i^2} \phi^s_{m,I}(x) \right| &= \left| \phi^s_{m,I'}(x_1,\dots,x_{i-1},x_{i+1},\dots,x_{|I|}) \right|\cdot \left|\left(\psi^s\left(3N\left(\cdot-\nicefrac{m_i}{N}\right)\right) \right)''(x_i) \right| \\
    &\leq C^{|I|-1}\cdot CN^2s^{\max\{0,k-\tau\}} =C^{|I|}N^2s^{\max\{0,k-\tau\}}. 
\end{align*}
Moreover, if $i\neq r,$ then, if we set $I''\coloneqq I\setminus \{i,r\}$ we obtain with similar arguments as before that 
\begin{align*}
    &\left|\frac{\partial^2}{\partial x_i\partial x_r} \phi^s_{m,I}(x) \right| \\
    &=  \left| \phi^s_{m,I''}(x_1,\dots,x_{i-1},x_{i+1},\dots,x_{r-1},x_{r+1},\dots,x_{|I|}) \right| \\ &\alplus\cdot \left|\left(\psi^s\left(3N\left(\cdot-\nicefrac{m_i}{N}\right)\right) \right)'(x_i) \right|\cdot \left|\left(\psi^s\left(3N\left(\cdot-\nicefrac{m_r}{N}\right)\right) \right)'(x_r) \right| \\ &\leq C^{|I|-2}\cdot CN\cdot CNs^{\max\{0,k-\tau\}}=C^{|I|}N^2s^{\max\{0,k-\tau\}},
\end{align*}
where we assumed w.l.o.g.\ that $i<r$. This implies $|\phi^s_{m,I} |_{W^{2,\infty}(\R^{|I|})} \leq C^{|I|}N^2s^{\max\{0,k-\tau\}}$.
  
 \textbf{ad (\ref{item:pou_suppGeneralSigmoid}):}  First of all, assume that $d=1.$ Let \underline{$\tau=0$} and let $x\leq -1/N.$ Then, since $s>R,$ we have that $3Nsx+3/2s,3Nsx-3/2\leq -R.$ We then have by the triangle inequality and the assumption on $\varrho$ that 
 \begin{align*}
     \left|\phi_0^s(x) \right| &= \left|\frac{\varrho(3Nsx+3/2s)-\varrho(3Nsx-3/2s)}{B-A} \right| \leq \left|\frac{\varrho(3Nsx+3/2s)-A}{B-A}\right| +\left|\frac{\varrho(3Nsx-3/2s)-A}{B-A}\right| \\ &\leq \frac{C'e^{D(3Nsx+3/2s)} + C'e^{D(3Nsx-3/2s)}}{B-A} \leq \frac{C'e^{D(-3s+3/2s)} + C'e^{D(-3s-3/2s)}}{B-A} \leq 2C'\frac{e^{-Ds}}{B-A}.
 \end{align*}
 Now, let $k\in\{1,\dots,j\}.$ Then, by the assumption on $\varrho,$ we have  
 \begin{align*}
     &\left|(\phi_0^s)^{(k)}(x) \right| \\ &=(3Ns)^k \left|\frac{\varrho^{(k)}(3Nsx+3/2s)-\varrho^{(k)}(3Nsx-3/2s)}{B-A} \right| 
     \leq (3Ns)^k\left|\frac{\varrho^{(k)}(3Nsx+3/2s)}{B-A}\right| +\left|\frac{\varrho^{(k)}(3Nsx-3/2s)}{B-A} \right| \\
     &\leq \frac{C'(3Ns)^k\left(e^{D(3Nsx+3/2s)}+e^{D(3Nsx-3/2s)}\right)}{B-A} 
     \leq \frac{C'(3Ns)^k\left(e^{D(-3s+3/2s)}+e^{D(-3s-3/2s)}\right)}{B-A} \\
     &\leq \frac{2C'}{B-A}(3Ns)^ke^{-Ds}.
 \end{align*}
 The case $x\geq 1/N$ can be proven in the same way.
 
 Now let \underline{$\tau=1$} and let again $x\leq -1/N$. Then $3Nsx+2s,3Nsx+s,3Nsx-2s,3Nsx-s\leq -s<-R.$ By the mean value theorem there exist $\xi_1\in(3Nsx+s,3Nsx+2s)$ and $\xi_2\in (3Nsx-2s,3Nsx-s)$ such that
  \begin{align*}
      \phi_0^s(x)= \frac{1}{s(B-A)}\left( \varrho'(\xi_1^x) - \varrho'(\xi_2^x)\right).
  \end{align*}  
  The remainder of the proof follows in exactly the same way as the proof of the analogous statement for $\tau=0$.
  The statement for $x\geq 1/N$ can be done in exactly the same manner. 
  Now, let $d\in\N$ and let $x\in \Omega^{c}_m$. Then there exists some $l\in\{1,\dots d\}$ with $|x_l- \frac{m_l}{N}|\geq 1/N.$ This  implies for $I'=\{1,\dots,d\}\setminus\{l\}$ by employing Equation~\eqref{eq:WinftyIndex} that 
    \begin{align*}
        |\phi_m^s(x)|= \left|\phi_{m,I'}^s(x_1,\dots,x_{l-1},x_{l+1},\dots, x_d)\right|\cdot \left|\psi^s\left(3N\left(x_l-\nicefrac{m_l}{N}\right)\right)\right| \leq C^{d-1}\cdot Ce^{-Ds}.
    \end{align*}    
     This shows that 
    $|\phi_m^s|_{W^{0,\infty}(\Omega^{c}_m)}\leq C^{d}e^{-Ds}.$ 
    By proceeding in a similar manner and with the same techniques as in the proof of (\ref{item:pou_derivativeGeneralSigmoid}), one can show the remaining Sobolev semi-norm estimates for the higher-order derivatives. 
    The "in-particular" part then follows from~Proposition~\ref{prop:ExpPolEst}.

  \textbf{ad (\ref{item:pou_sumGeneralSigmoid}):}  First of all, assume that $d=1$. Let \underline{$\tau=0$}. It is not hard to see that 
  \begin{align*}
      \sum_{m=0}^N \phi_m^s(x) = \frac{1}{B-A}\left(\varrho(3Nsx+3/2s)-\varrho(3Ns(x-1)-3/2s) \right).
  \end{align*}
  
  We now have for all $x\in (0,1)$ and using the properties of $\varrho$ that 
  \begin{align*}
      \left|1-\sum_{m=0}^N \phi_m^s(x)\right| &= \left|\frac{B-A-\left(\varrho(3Nsx+3/2s)-\varrho(3Ns(x-1)-3/2s)\right)}{B-A}\right|  \\
      &\leq \left|\frac{B-\varrho(3Nsx+3/2s)} {B-A}\right| +\left|\frac{A-\varrho(3Nsx-3Ns-3/2s)} {B-A}\right|  \eqqcolon \mathrm{I} + \mathrm{II}.
  \end{align*}
  We continue by estimating $\mathrm{I}$. Since $3Nsx+3/2s\geq 3/2s>3/2R,$ we obtain that 
  \begin{align*}
      \mathrm{I} \leq \frac{C'e^{-D(3Nsx+3/2s)}}{B-A} \leq \frac{C'e^{-3/2\cdot Ds}}{B-A}
  \end{align*}
  On the other hand, since $3Nsx-3Ns-3/2s\leq -3/2s\leq 0$ we obtain that 
   \begin{align*}
      \mathrm{II} \leq \frac{C'e^{D(3Nsx-3Ns-3/2s)}}{B-A} \leq \frac{C'e^{-3/2\cdot Ds}}{B-A}.
  \end{align*}
  
 For the multidimensional case we have
\begin{align*}
\pabs*{1-\sum_{m\in\MNd}\phi_m^s(x)}&=\pabs*{1-\sum_{m\in\MNd}\prod_{l=1}^d \psi^s\left(3N\left(x_l-\frac{m_l}{N}\right)\right)}\\
&=\pabs*{1-\prod_{l=1}^d \sum_{m=0}^N\psi^s\left(3N\left(x_l-\frac{m}{N}\right)\right)}\\
&=\pabs*{1-\prod_{l=1}^d \left(\underbrace{\frac{1}{B-A}\left(\varrho(3Nsx_l+3/2s)-\varrho(3Ns(x_l-1)-3/2s) \right)}_{\coloneqq \pi_l, \text{ and }\pi_0\coloneqq 1}\right)}\\
&\leq \sum_{l=1}^d\pabs{\pi_0\cdot\ldots\cdot\pi_l (1-\pi_{l+1})}\leq C\cdot e^{-3/2Ds},
\end{align*}
which follows from the one-dimensional case. 
  Now, let $k\in \{1,\dots,j\}$ and we consider only the case $d=1.$ The multi-dimensional case follows in exactly the same manner as the analogous considerations in~(\ref{item:pou_derivativeGeneralSigmoid}) and~(\ref{item:pou_suppGeneralSigmoid}). We have that 
  \begin{align*}
      \left|\left(\sum_{m=0}^N \phi_m^s\right)^{(k)}(x) \right| &\leq (3Ns)^k\cdot \frac{1}{B-A}\left(\left|\varrho^{(k)}(3Nsx+3/2s) \right| +\left|\varrho^{(k)}(3Nsx-3Ns-3/2s) \right|\right) 
  \end{align*}
  Since $x>0,$ we have that $3Nsx+3/2s\geq3/2s>R$. Since $x<1,$ $3Nsx-3Ns-3/2s\leq -3/2R<-R$. Hence, by the assumptions on $\varrho$ we obtain that 
  \begin{align*}
      \left|\left(\sum_{m=0}^N \phi_m^s\right)^{(k)}(x) \right| \leq \frac{C'(3Ns)^k}{B-A}\left( e^{-D(3Nsx+3/2s)} + e^{D(3Nsx-3Ns-3/2s)} \right) \leq \frac{2C'(3Ns)^k}{B-A}e^{-3/2Ds}.
  \end{align*}
  The multidimensional case for $k\in\{0,\ldots,j\}$ follows in a similar manner as above from the tensor structure.
  Now, let \underline{$\tau=1$}. It is not hard to see that for all $x\in\R$ there holds 
  \begin{align*}
      \sum_{m=0}^N \phi_m^s(x) = \frac{1}{s(B-A)} \left(\varrho(3Nsx+2s)-\varrho(3Nsx+s)-\varrho(3Nsx-3Ns-s)+\varrho(3Nsx-3Ns-2s) \right).
  \end{align*}
  Now, let $x\in(0,1).$ We have that $3Nsx+2s,3Nsx+s\geq s>R$ and $3Nsx-3Ns-s,3Nsx-3Ns-2s\leq -s<-R.$ Hence, by the mean value theorem, for every $x\in \R$ there exist $\xi_1\in (3Nxs+s,3Nxs+2s)$ and $\xi_2\in (3Nxs-3Ns-2s,3Nxs-3Ns-s)$ such that
  \begin{align*}
      \sum_{m=0}^N \phi_m^s(x)&= \frac{1}{B-A}\left(\varrho'(\xi_1) - \varrho'(\xi_2)\right).
  \end{align*}
  Now we have that 
  \begin{align*}
      \left|1-  \sum_{m=0}^N \phi_m^s(x)\right| \leq \left|\frac{B-\varrho'(\xi_1)}{B-A} \right| + \left|\frac{A-\varrho'(\xi_2)}{B-A} \right|.
  \end{align*}
 The remainder of the statement can be proven in exactly the same way as the analogous statement for $\tau=0.$ 
   \textbf{ad (\ref{item:pou_networkGeneralSigmoid}):} This immediately follows from the definition of the functions $\phi_m^s$. 
   \end{proof}

 \subsection{Approximation by Localized Polynomials} \label{app:ApproxLocal}
In this section, we demonstrate how to approximate a function $f\in \mathcal{F}_{n,d,p}$ by localized polynomials based on the exponential (respectively polynomial, exact) $(j,\tau)$-PU. We only give the proof for the case of an exponential PU. The other cases follow in essentially the same way with some simplifications.
\begin{lemma}\label{lemma:polynomial_approximation}
We make the following assumption:
\begin{itemize}
    \item Let $d\in \N$, $j,\tau \in\N_0,~k\in\{0,\dots,j\}$, $n\in\N_{\geq k+1}$ and $1\leq p\leq \infty$.
    \item Assume that $(\Psi^{(j,\tau,N,s)})_{N\in\N,s\geq 1}$ is an exponential (respectively polynomial, exact) $(j,\tau)$-PU from Definition~\ref{def:partition_of_unity}. Let $\mu\in(0,1)$. For $N\in\N,$ set 
    \begin{align*}
        s\coloneqq \begin{cases} N^\mu,&\text{if exponential PU,}\\
        N^{\frac{2d/p+d+n}{D}},&\text{if polynomial PU,}\\
		1, &\text{if exact PU},
		\end{cases}
    \end{align*}
    
\end{itemize}
	  Then there is a constant $C=C(d,n,p,k)>0$ and $\widetilde N = \widetilde N(d,p,\mu,k,\tau)\in\N$ such that for every $f\in\Wkp[n][p][\cube^d]$ and every $m\in \{0,\dots,N\}^d$, there exist polynomials $p_{f,m}(x)=\sum_{|\alpha|\leq n-1}c_{f,m,\alpha}x^\alpha$ for $m\in\MNd$ with the following properties:
	
Set $f_N\coloneqq\sum_{m\in\MNd}\phi_m^s p_{f,m}$.
Then, the operator $T_k:\Wkp[n][p][\cube^d]\to\Wkp[k][p][\cube^d]$ with $T_kf=f-f_N$ is linear and bounded with
		\[\displaystyle
			\norm{T_k f}_{\Wkp[k][p][\cube^d]}\leq C\norm{f}_{\Wkp[n][p][\cube^d]}\cdot  \begin{cases}\left(\frac{1}{N}\right)^{n-k-\drate}, \quad & \text{ if exponential PU,} \\
			\left(\frac{1}{N}\right)^{n-k},\quad & \text{ for } k\leq \tau, \text{ if polynomial PU,} \\
			\left(\frac{1}{N}\right)^{n-k}, \quad & \text{ if exact PU,}
			\end{cases} 
			\]
			for all $N\in\N$ with $N\geq \widetilde N$.
\end{lemma}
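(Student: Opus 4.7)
The plan is to work on the natural partition of $\cube^d$ into patches $\widetilde\Omega_{\widetilde m} \coloneqq \cube^d \cap B_{1/(2N),\norm{\cdot}_\infty}(\widetilde m/N)$ for $\widetilde m \in \MNd$ and to aggregate a local estimate via $\norm{T_k f}_{\Wkp[k][p][\cube^d]}^p = \sum_{\widetilde m} \norm{T_k f}_{\Wkp[k][p][\widetilde\Omega_{\widetilde m}]}^p$. First, I would use the extension operator from Remark~\ref{remark:extension_operator} to obtain $\widetilde f = Ef \in W^{n,p}(\R^d)$ with $\norm{\widetilde f}_{W^{n,p}(\R^d)} \leq C \norm{f}_{\Wkp[n][p][\cube^d]}$, and for each $m \in \MNd$ take $p_{f,m}$ to be the averaged-Taylor polynomial supplied by Lemma~\ref{lemma:bramble_hilbert} applied to $\widetilde f$ on $\Omega_{m,N} = B_{1/N,\norm{\cdot}_\infty}(m/N)$. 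Linearity of $T_k$ then follows because the averaged-Taylor construction is linear in its argument.

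On each patch $\widetilde\Omega_{\widetilde m}$ the triangle inequality gives
\begin{align*}
\norm{T_k f}_{\Wkp[k][p][\widetilde\Omega_{\widetilde m}]} \leq \underbrace{\Big\| \widetilde f \Big(\mathbbm{1}_{\cube^d} - \sum_m \phi_m^s\Big) \Big\|_{\Wkp[k][p][\widetilde\Omega_{\widetilde m}]}}_{\text{(I)}} + \underbrace{\Big\| \sum_m \phi_m^s(\widetilde f - p_{f,m}) \Big\|_{\Wkp[k][p][\widetilde\Omega_{\widetilde m}]}}_{\text{(II)}}.
\end{align*}
Term (I) is controlled via the product rule (Lemma~\ref{lemma:product_rule_bound}) together with PU property~\eqref{item:pou_sumGeneralSigmoid}. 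For (II), I split the sum into the \emph{near} indices $\{m : \norm{m-\widetilde m}_\infty \leq 1\}$ (at most $3^d$ terms) and the \emph{far} indices $\{m : \norm{m-\widetilde m}_\infty \geq 2\}$. For far $m$ one has $\widetilde\Omega_{\widetilde m} \subset \Omega_m^c$, so PU property~\eqref{item:pou_suppGeneralSigmoid} gives $\norm{\phi_m^s}_{\Wkp[k][\infty][\widetilde\Omega_{\widetilde m}]} \leq C N^k s^{\max\{0,k-\tau\}} \delta(s)$, where $\delta(s) \in \{e^{-Ds}, s^{-D}, 0\}$ according to the PU type; combined with the coefficient bound $|c_{f,m,\alpha}| \leq C N^{d/p} \norm{\widetilde f}_{W^{n,p}(\Omega_{m,N})}$ from Lemma~\ref{lemma:bramble_hilbert}, this controls the far-patch contribution. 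For near $m$ one has $\widetilde\Omega_{\widetilde m} \subset \Omega_{m,N}$, so Bramble-Hilbert yields $\norm{\widetilde f - p_{f,m}}_{\Wkp[k-i][p][\widetilde\Omega_{\widetilde m}]} \leq C N^{-(n-k+i)} \norm{\widetilde f}_{W^{n,p}(\Omega_{m,N})}$ for each $i = 0,\dots,k$; pairing this with property~\eqref{item:pou_derivativeGeneralSigmoid} through the refined product rule $\norm{\phi g}_{W^{k,p}} \leq C \sum_i \norm{\phi}_{W^{i,\infty}} \norm{g}_{W^{k-i,p}}$ yields a per-patch bound of order $N^{-(n-k)} s^{\max\{0,k-\tau\}} \norm{\widetilde f}_{W^{n,p}(\Omega_{m,N})}$.

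Aggregating the $p$-th powers over $\widetilde m$, exploiting that each $\Omega_{m,N}$ meets only a $d$-dependent number of patches, and invoking $\norm{\widetilde f}_{W^{n,p}(\R^d)} \leq C \norm{f}_{W^{n,p}(\cube^d)}$ produces a global bound of the shape
\[
\norm{T_k f}_{\Wkp[k][p][\cube^d]} \leq C \norm{f}_{\Wkp[n][p][\cube^d]} \Bigl( N^{-(n-k)} s^{\max\{0,k-\tau\}} + N^{d/p + k}\, s^{\max\{0,k-\tau\}}\, \delta(s) \Bigr).
\]
The crux — and the main obstacle — is calibrating $s$ against $N$ so that the polynomial prefactor $N^{d/p+k}$ on the far-patch term is absorbed into $\delta(s)$. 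For an \emph{exponential PU}, $s = N^\mu$ suffices because $e^{-D N^\mu}$ beats any polynomial in $N$ once $N \geq \widetilde N$; the surviving $s^{\max\{0,k-\tau\}}$ on the near-patch term then manifests as the $\drate$ loss visible in the stated rate. For a \emph{polynomial PU}, $s = N^{(2d/p+d+n)/D}$ is required merely to push the far term below $N^{-(n-k)}$, after which the factor $s^{\max\{0,k-\tau\}}$ on the near term is harmless only in the regime $k \leq \tau$ — which is exactly where the lemma claims a rate. For an \emph{exact PU}, $\delta \equiv 0$ removes the far-patch term entirely and one obtains the clean rate $N^{-(n-k)}$ throughout.
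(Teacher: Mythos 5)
Your overall strategy is the same as the paper's: extend $f$ to $\R^d$, take averaged Taylor polynomials from Bramble--Hilbert on $\Omega_{m,N}=B_{1/N,\norm{\cdot}_\infty}(m/N)$, split into a PU-sum term and a localized-error term, handle far patches by the decay property plus a rough bound on $\widetilde f-p_{f,m}$, and then calibrate $s$ against $N$. The one genuine difference — working with the disjoint half-radius patches $\widetilde\Omega_{\widetilde m}=\cube^d\cap B_{1/(2N),\norm{\cdot}_\infty}(\widetilde m/N)$ so that the $\ell^p$-aggregation is an exact identity rather than a cover with the paper's "$2^d$ disjoint families" trick — is a reasonable simplification of the bookkeeping.

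There is, however, a gap in your treatment of the near patches. You claim that for $\norm{m-\widetilde m}_\infty\leq 1$ one has $\widetilde\Omega_{\widetilde m}\subset\Omega_{m,N}$ and so may apply Bramble--Hilbert directly on $\widetilde\Omega_{\widetilde m}$. This is true for $m=\widetilde m$, but fails for $\norm{m-\widetilde m}_\infty=1$: the centres $m/N$ and $\widetilde m/N$ are at $\ell^\infty$-distance $1/N$, so a point of $\widetilde\Omega_{\widetilde m}$ can lie at distance up to $3/(2N)$ from $m/N$, outside $\Omega_{m,N}$. (Concretely, in one dimension with $N=3$, $\widetilde m=1$, $m=2$: $\widetilde\Omega_1=(1/6,1/2)\not\subset\Omega_{2,3}=(1/3,1)$.) Without that containment you cannot invoke the Bramble--Hilbert estimate on $\widetilde\Omega_{\widetilde m}$ for $\norm{m-\widetilde m}_\infty=1$. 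The paper resolves exactly this by a "mixed" step: on a neighbouring patch it splits into $\widetilde\Omega_{\widetilde m}\setminus\Omega_m^{\mathrm c}\subset\Omega_{m,N}$ (Bramble--Hilbert) and $\widetilde\Omega_{\widetilde m}\cap\Omega_m^{\mathrm c}$ (decay). You need that split too; alternatively, you could enlarge the Bramble--Hilbert domain to radius $2/N$ (Bramble--Hilbert is flexible under this change, only costing a constant), which restores the containment for all $\norm{m-\widetilde m}_\infty\leq 1$. As written, the near-patch estimate does not follow. A secondary (cosmetic) issue: the polynomial prefactor on your far-patch aggregate should be of order $N^{d+2d/p+k}$ rather than $N^{d/p+k}$ — the extra $N^{d+d/p}$ comes from summing $\sim N^d$ far indices and then taking the $\ell^p$-norm over $\sim N^d$ values of $\widetilde m$ — but this does not affect the calibration of $s$, which you state correctly.
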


 Before the proof of this statement, we need some preparation. We start with the following observation.

\begin{remark}\label{remark:polynomial_coefficients}
Since the polynomials utilized in~Lemma~\ref{lemma:polynomial_approximation} are the averaged Taylor polynomials from the Bramble-Hilbert Lemma~\ref{lemma:bramble_hilbert}, we get that there is a constant $C=C(d,n,k)>0$ such that for any $f\in\Wkp[n][p][\cube^d]$ the coefficients of the polynomials $p_{f,m}$ satisfy 
	\[
		\pabs{c_{f,m,\alpha}}\leq C\norm{\widetilde f}_{\Wkp[n][p][\Omega_{m,N}]}N^{d/p},
		\]
		for all $\alpha\in\N^d_0$ with $\pabs{\alpha}\leq n-1,$ and for all $m\in\{0,\ldots,N\}^d$, where $\Omega_{m,N}\coloneqq B_{\frac{1}{N},\norm{\cdot}_{\infty}}\left(\frac{m}{N}\right)$ and $\widetilde f\in\Wkp[n][p][\R^d]$ is an extension of $f$.
\end{remark}

We now state and prove an auxiliary result. The estimation will be very rough and can for sure be improved. This is, however, not necessary for our purpose.
\begin{lemma}\label{lemma:f_p_simple}
    Under the conditions of Lemma~\ref{lemma:polynomial_approximation} and with the notation from Remark~\ref{remark:polynomial_coefficients} we have for all $m,\wtilde m\in\MNd$ the estimate
    \[
    \norm{\widetilde f-p_{f,m}}_{\Wkp[k][p][\Omega_{\wtilde m,N}]}\leq C N^{d/p}\norm{f}_{\Wkp[n][p][\cube^d]},
    \]
    for a constant $C=C(n,d,p,k)$.
\end{lemma}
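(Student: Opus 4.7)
\textbf{Proof plan for Lemma \ref{lemma:f_p_simple}.} The plan is to split via the triangle inequality
\[
\norm{\widetilde f-p_{f,m}}_{\Wkp[k][p][\Omega_{\wtilde m,N}]}\leq \norm{\widetilde f}_{\Wkp[k][p][\Omega_{\wtilde m,N}]}+\norm{p_{f,m}}_{\Wkp[k][p][\Omega_{\wtilde m,N}]},
\]
and bound each term separately. The first term is immediate: since $k\leq n$ and $\Omega_{\wtilde m,N}\subset\R^d$, the extension bound from Remark~\ref{remark:extension_operator} gives $\|\widetilde f\|_{\Wkp[k][p][\Omega_{\wtilde m, N}]}\leq \|\widetilde f\|_{\Wkp[n][p][\R^d]}\leq C\|f\|_{\Wkp[n][p][\cube^d]}$, which is absorbed into $CN^{d/p}\|f\|_{\Wkp[n][p][\cube^d]}$ for $N\geq 1$.

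For the polynomial term, I would use the coefficient bound from Remark~\ref{remark:polynomial_coefficients}: for every $\alpha$ with $|\alpha|\leq n-1$,
\[
|c_{f,m,\alpha}|\leq C\,N^{d/p}\,\|\widetilde f\|_{\Wkp[n][p][\Omega_{m,N}]}\leq C\,N^{d/p}\,\|f\|_{\Wkp[n][p][\cube^d]},
\]
where the last inequality again uses the extension bound. Since $\Omega_{\wtilde m,N}\subset [-1,2]^d$ for all $\wtilde m\in\MNd$ and all $N\geq 1$, the monomials $x^\alpha$ and all their derivatives $D^\beta x^\alpha$ with $|\beta|\leq k$ are uniformly bounded by a constant depending only on $n,k,d$ on $\Omega_{\wtilde m,N}$. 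Thus
\[
\|D^\beta p_{f,m}\|_{L^p(\Omega_{\wtilde m,N})}\leq \sum_{|\alpha|\leq n-1} |c_{f,m,\alpha}|\,\|D^\beta x^\alpha\|_{L^p(\Omega_{\wtilde m,N})}\leq C\,N^{d/p}\|f\|_{\Wkp[n][p][\cube^d]}\,\mathrm{vol}(\Omega_{\wtilde m,N})^{1/p},
\]
and since $\mathrm{vol}(\Omega_{\wtilde m,N})^{1/p}\leq C\,N^{-d/p}$, this term is in fact bounded by $C\|f\|_{\Wkp[n][p][\cube^d]}$, which is again dominated by $CN^{d/p}\|f\|_{\Wkp[n][p][\cube^d]}$. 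Summing over the finitely many $\beta$ with $|\beta|\leq k$ yields the desired estimate on $\|p_{f,m}\|_{\Wkp[k][p][\Omega_{\wtilde m,N}]}$.

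Combining the two estimates gives the claim with a constant $C=C(n,d,p,k)$. There is no real obstacle here; the estimate is intentionally loose (the factor $N^{d/p}$ is in fact far from sharp) and serves only to give a uniform handle on the local size of the Taylor polynomial $p_{f,m}$ on a neighboring patch $\Omega_{\wtilde m,N}$, which will be needed later to control the off-patch contributions $\sum_{\|m-\wtilde m\|_\infty>1}\phi_m^s\,p_{f,m}$ when those are combined with the decay estimates from Definition~\ref{def:partition_of_unity}\,\eqref{item:pou_suppGeneralSigmoid}.
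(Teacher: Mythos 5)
Your proof is correct and follows essentially the same route as the paper: split by the triangle inequality, bound $\widetilde f$ via the extension operator, and bound $p_{f,m}$ by combining the coefficient estimate from Remark~\ref{remark:polynomial_coefficients} with the uniform boundedness of $D^\beta x^\alpha$ on a fixed compact set containing every $\Omega_{\wtilde m,N}$. Your extra observation that the volume factor $\mathrm{vol}(\Omega_{\wtilde m,N})^{1/p}\lesssim N^{-d/p}$ actually cancels the $N^{d/p}$ from the coefficient bound is a (correct) sharpening the paper does not bother with, but it is not needed and does not change the argument.
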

\begin{proof}
We start with bounding the norm of the polynomial by using the triangle inequality. There holds 
\begin{align*}
    \norm{p_{f,m}}_{\Wkp[k][p][\Omega_{\wtilde m,N}]}= \norm*{\sum_{|\alpha|\leq n-1}c_{f,m,\alpha}x^\alpha}_{\Wkpd[k][p][\Omega_{\wtilde m,N}]{x}}\leq\sum_{|\alpha|\leq n-1}\pabs{c_{f,m,\alpha}}\cdot \norm{x^\alpha}_{\Wkpd[k][p][\Omega_{\wtilde m,N}]{x}}.
\end{align*}
Using that $\Omega_{\wtilde m,N}\subset B_{2,\norm{\cdot}_{\infty}}$ we get
\begin{equation}\label{eq:poly_bound}
    \norm{x^\alpha}_{\Wkpd[k][p][\Omega_{\wtilde m,N}]{x}}\leq (n-1)^k 2^{\pabs{\alpha}}\leq (n-1)^k 2^{n-1}.
\end{equation}
If we now combine Remark~\ref{remark:polynomial_coefficients} with Equation~\eqref{eq:poly_bound}, we get
\begin{align*}
    \sum_{|\alpha|\leq n-1}\pabs{c_{f,m,\alpha}}\norm{x^\alpha}_{\Wkpd[k][p][\Omega_{\wtilde m,N}]{x}}\leq C (n-1)^k 2^{n-1} \sum_{|\alpha|\leq n-1}  N^{d/p} \norm{\widetilde f}_{\Wkp[n][p][\Omega_{m,N}]}\leq C N^{d/p} \norm{f}_{\Wkp[n][p][\cube^d]},
\end{align*}
where we have additionally used Remark~\ref{remark:extension_operator} in the last step.
Finally, we can estimate, by the triangle inequality
\begin{align*}
    \norm{\widetilde f-p_{f,m}}_{\Wkp[k][p][\Omega_{\wtilde m,N}]}\leq C \norm{f}_{\Wkp[k][p][\cube^d]}+CN^{d/p}\norm{f}_{\Wkp[n][p][\cube^d]}\leq CN^{d/p}\norm{f}_{\Wkp[n][p][\cube^d]},
\end{align*}
where we again used the extension property from Equation~\eqref{eq:extension_bound} for the first step.
\end{proof}

Now we are in a position to prove Lemma \ref{lemma:polynomial_approximation}. 
\begin{proof}[Proof of Lemma~\ref{lemma:polynomial_approximation}]
We use approximation properties of the polynomials from the Bramble-Hilbert Lemma~\ref{lemma:bramble_hilbert} to derive local estimates and then combine them using an exponential PU to obtain a global estimate. In order to use this strategy also near the boundary, we make use of an extension operator (see Remark~\ref{remark:extension_operator}).

	\textbf{Step 1 (Local estimates based on Bramble-Hilbert)}: For each $m\in \MNd$ we set 
	\[
		\Omega_{m,N}\coloneqq B_{\frac{1}{N},\norm{\cdot}_{\infty}}\Big(\frac{m}{N}\Big)
	\]and denote by $p_m=p_{f,m}$ the polynomial from Lemma~\ref{lemma:bramble_hilbert} 
	 so that we can directly state the estimate
	\begin{equation}\label{eq:local_Winf_bound}
		\norm[\big]{\widetilde f-p_m}_{\Wkp[k][p][\Omega_{ m,N}]}\leq C\left(\frac{1}{N}\right)^{n-k}\norm{\widetilde f}_{\Wkp[n][p][\Omega_{m,N}]}.
	\end{equation}
 Furthermore, similarly to~\cite[Lemma~C.4]{guhring2019error}, we  obtain the estimate 
	\begin{align*}
		\norm[\big]{\phi_m^s(\widetilde f -p_m)}_{\Wkp[k][p][\Omega_{m,N}]}&\leq C\sum_{\kappa=0}^k \norm{\phi^s_m}_{\Wkp[\kappa][\infty][\Omega_{m,N}]}\norm{\widetilde f -p_m}_{\Wkp[k-\kappa][p][\Omega_{m,N}]}\\
		&\leq C\sum_{\kappa=0}^k N^{\kappa+\dratej{\kappa}}\left(\frac{1}{N}\right)^{n-k+\kappa}\norm{\widetilde f}_{\Wkp[n][p][\Omega_{m,N}]}\\
		&\leq C \left(\frac{1}{N}\right)^{n-k-\drate}\norm{\widetilde f}_{\Wkp[n][p][\Omega_{m,N}]},
	\end{align*}
	where we used the product rule from Lemma~\ref{lemma:product_rule_bound} for the first step and the estimate of the derivative of $\phi_m^s$ from  Lemma~\ref{prop:partition_of_unity}~(\ref{item:pou_derivativeGeneralSigmoid}) together with the Bramble-Hilbert estimate in Equation~\eqref{eq:local_Winf_bound} for the second step.
	
\textbf{Step 2 (Local estimates based on exponential decay)}:
Since our localizing bump functions $\phi_{m}^s$ do not necessarily have compact support on $\Omega_{m,N}$ we also need to bound the influence of $\phi_{ m}^s(\widetilde f -p_{ m})$ on patches $\Omega_{\wtilde m,N}$ with $\wtilde m \neq m$ where we can not use the Bramble-Hilbert lemma. Here, we will make use of the exponential decay of the bump functions $\phi^s_{m}$ outside a certain ball centered at $m/N$ (see Lemma~\ref{prop:partition_of_unity}~(\ref{item:pou_suppGeneralSigmoid})).

This is possible for the case where $\Omega_{\wtilde m,N}$ is not a neighboring patch of $\Omega_{m, N}$, i.e.\ $\norm{\wtilde m-m}_{\infty}>1$. Then $\Omega_{\wtilde m,N}\subset \Omega^{c}_{m} $ and we have (by using Lemma \ref{lemma:product_rule_bound} in the first step), that
	\begin{align*}
		\norm[\big]{\phi_{ m}^s(\widetilde f -p_{ m})}_{\Wkp[k][p][\Omega_{\wtilde m,N}]}&\leq C\norm{\phi^s_{ m}}_{\Wkp[k][\infty][\Omega_{\wtilde m,N}]}\norm{\widetilde f -p_{m}}_{\Wkp[k][p][\Omega_{\wtilde m,N}]}\\
		\expl{Lemma~\ref{prop:partition_of_unity}~(\ref{item:pou_suppGeneralSigmoid})) with $\Omega_{\wtilde m,N}\subset \Omega^{c}_{m}$}&\leq CN^{k+\drate} e^{-DN^\mu} \norm{\widetilde f -p_{ m}}_{\Wkp[k][p][\Omega_{\wtilde m,N}]}\\
		\expl{Lemma~\ref{lemma:f_p_simple}}&\leq C\underbrace{N^{k+\drate} N^{d/p}}_{\coloneqq \gamma(N)}e^{-DN^\mu} \norm{f}_{\Wkp[n][p][\cube^d]}.
	\end{align*}
	Then, by Proposition \ref{prop:ExpPolEst}, there exists $N_1 = N_1(\mu,d,p)\in\N$ such that $e^{-DN^\mu}\leq C\gamma(N)^{-1}\cdot (N+1)^{-d-d/p}\cdot N^{-(n-k-\drate)}$ for all $N\geq N_1$. Consequently, we have 
	\begin{equation*}
		\norm[\big]{\phi_{m}^s(\widetilde f -p_{m})}_{\Wkp[k][p][\Omega_{\wtilde m,N}]}\leq C (N+1)^{-d-d/p}N^{-(n-k-\drate)}\norm{f}_{\Wkp[n][p][\cube^d]},
	\end{equation*}
	for all $N\geq N_1$.
	
\textbf{Step 3 (Mixed local estimates)}:
If $\Omega_{\wtilde m,N}$ is a neighboring patch of $\Omega_{m, N}$, i.e.\ $\norm{\wtilde m-m}_{\infty}=1$, then we have to split the patch in a region $\Omega_{\wtilde m,N}\cap \Omega^{c}_{m}$ where we have exponential decay of the bump function and a region $\Omega_{\wtilde m,N}\setminus \Omega^{c}_{m}\subset \Omega_{m,N}$ where we can make use of the Bramble-Hilbert Lemma. In detail, we have
	\begin{align*}
		\norm[\big]{\phi_{m}^s(\widetilde f -p_{m})}_{\Wkp[k][p][\Omega_{\wtilde m,N}]}
		&\leq \norm[\big]{\phi_{m}^s(\widetilde f -p_{m})}_{\Wkp[k][p][\Omega_{\wtilde m,N}\setminus \Omega^{c}_{m}]} + \norm[\big]{\phi_{m}^s(\widetilde f -p_{m})}_{\Wkp[k][p][\Omega_{\wtilde m,N}\cap \Omega^{c}_{m}]}\\
		&\leq CN^{-(n-k-\drate)}\left(  \norm{\widetilde f}_{\Wkp[n][p][\Omega_{m,N}]} + (N+1)^{-d-d/p}\norm{f}_{\Wkp[n][p][\cube^d]}\right),
		\end{align*}
		for all $N\geq N_1$. Here we used Step 1 to bound the first term of the sum and Step 2 for the second.

\textbf{Step 4 (Global estimate)}: 
		Using that $\widetilde f $ is an extension of $f$ on $\cube^d$ we can write
	\begin{align*}
		&\norm*{f -\sum_{m\in\MNd} \phi_m^s p_m}_{\Wkp[k][p][\cube^d]}\\
		&\leq \norm*{\widetilde f -\sum_{m\in\MNd} \phi_m^s \widetilde f}_{\Wkp[k][p][\cube^d]} +\norm*{\sum_{m\in\MNd} \phi_m^s(\widetilde f-p_m)}_{\Wkp[k][p][\cube^d]}\\
		&\leq \underbrace{\norm*{\widetilde f\Big(\mathbbm{1}_{\cube^d} -\sum_{m\in\MNd} \phi_m^s\Big) }_{\Wkp[k][p][\cube^d]}}_{\textbf{Step 4a}}+ \left(\underbrace{\sum_{\wtilde m\in\MNd} \norm*{\sum_{m\in\MNd} \phi_m^s(\widetilde f-p_m)}_{\Wkp[k][p][\Omega_{\wtilde m,N}]}^p}_{\textbf{Step 4b}}\right)^{1/p},\numberthis\label{eq:global_max}
	\end{align*}
	where the last step follows from $\cube^d\subset\bigcup_{\wtilde m\in\MNd}\Omega_{\wtilde m,N}$. 
	
\textbf{Step 4a (Partition of Unity)}: 
	For the first term in Equation~\eqref{eq:global_max}, we get by the product rule from Lemma~\ref{lemma:product_rule_bound}
	\begin{align*}
	    \norm*{\widetilde f\Big(\mathbbm{1}_{\cube^d} -\sum_{m\in\MNd} \phi_m^s\Big) }_{\Wkp[k][p][\cube^d]}&C\leq \norm{f}_{\Wkp[k][p][\cube^d]}\norm*{\mathbbm{1}_{\cube^d} -\sum_{m\in\MNd} \phi_m^s }_{\Wkp[k][\infty][\cube^d]}\\
	    \expl{Property (\ref{item:pou_sumGeneralSigmoid}) from Lemma~\ref{prop:partition_of_unity}}&\leq C \norm{f}_{\Wkp[k][p][\cube^d]}\cdot N^{-(n-k-\drate)},\numberthis\label{eq:distance_to_one}
	\end{align*}
	for all $N\geq N_2=N_2(\mu,k,\tau)$. For the second inequality we used the same trick as in Step~2 which is based on Proposition~\ref{prop:ExpPolEst}.
	
\textbf{Step 4b (Patches)}: Considering the second term from Equation~\eqref{eq:global_max}, we obtain for each $\wtilde m\in\MNd$ 
	\begin{align*}
		&\norm*{\sum_{m\in\MNd} \phi_m^s(\widetilde f-p_m)}_{\Wkp[k][p][\Omega_{\wtilde m,N}]}\\
		&\leq {\underbrace{\vphantom{\sum_{\substack{m\in\MNd,\vspace{0.2em}\vspace{0.2em}\\ \norm{m-\wtilde m}_{\infty} = 1}}}\norm{\phi_{\wtilde m}^s(\widetilde f-p_{\wtilde m})}_{\Wkp[k][p][\Omega_{\wtilde m,N}]}}_{(\star)}}+ \underbrace{\sum_{\substack{m\in\MNd,\vspace{0.2em}\vspace{0.2em}\\ \norm{m-\wtilde m}_{\infty} = 1}} \norm{\phi_m^s(\widetilde f-p_m)}_{\Wkp[k][p][\Omega_{\wtilde m,N}]}}_{(\star \star)}+ \underbrace{\sum_{\substack{m\in\MNd,\vspace{0.2em}\vspace{0.2em}\\ \norm{m-\wtilde m}_{\infty} > 1}} \norm{\phi_m^s(\widetilde f-p_m)}_{\Wkp[k][p][\Omega_{\wtilde m,N}]}}_{(\star \star \star)}.\numberthis\label{eq:war_of_patches}
	\end{align*}
	The term $(\star)$ can be handled with Step~1, the term $(\star \star)$ with Step~3 and the third one $(\star \star \star)$ with Step~2. Since $(\star \star)$ and $(\star \star \star)$ require a similar strategy we only demonstrate it for the third term. We get from Step~2
\begin{align*}
    \sum_{\substack{m\in\MNd,\vspace{0.2em}\vspace{0.2em}\\ \norm{m-\wtilde m}_{\infty} > 1}}\norm{\phi_m^s(\widetilde f-p_m)}_{\Wkp[k][p][\Omega_{\wtilde m,N}]}
    &\leq CN^{-(n-k-\drate)}(N+1)^{-d-d/p}\sum_{\substack{m\in\MNd,\vspace{0.2em}\vspace{0.2em}\\ \norm{m-\wtilde m}_{\infty} > 1}}\norm{f}_{\Wkp[n][p][\cube^d]}\\
&\leq CN^{-(n-k-\drate)}(N+1)^{-d/p}\norm{f}_{\Wkp[n][p][\cube^d]}.
\end{align*}

We can now bound the sum from Equation~\eqref{eq:war_of_patches} for each $\wtilde m\in\{0,\ldots,N\}^d$ by
	\begin{align*}
		&\norm*{\sum_{m\in\MNd} \phi_m^s(\widetilde f-p_m)}_{\Wkp[k][p][\Omega_{\wtilde m,N}]}\\
		&\leq CN^{-(n-k-\drate)}\left( 2(N+1)^{-d/p}\norm{f}_{\Wkp[n][p][\cube^d]}+\sum_{\substack{m\in\MNd,\vspace{0.2em}\vspace{0.2em}\\ \norm{m-\wtilde m}_{\infty} \leq 1}}\norm{\widetilde f}_{\Wkp[n][p][\Omega_{m,N}]}\right)\numberthis\label{eq:tilde_patch}.
	\end{align*}
Consequently, we get
	\begin{align*}
		&\sum_{\wtilde m\in\MNd} \norm*{\sum_{m\in\MNd} \phi_m^s(\widetilde f-p_m)}_{\Wkp[k][p][\Omega_{\wtilde m,N}]}^p\\
		&\leq CN^{-(n-k-\drate)p}\sum_{\wtilde m\in\MNd} \left( 2(N+1)^{-d/p}\norm{f}_{\Wkp[n][p][\cube^d]}+\sum_{\substack{m\in\MNd,\vspace{0.2em}\vspace{0.2em}\\ \norm{m-\wtilde m}_{\infty} \leq 1}}\norm{\widetilde f}_{\Wkp[n][p][\Omega_{m,N}]}\right)^p\\
		&\leq CN^{-(n-k-\drate)p}(3^d+1)^{p/q} \\ &\qquad\qquad \cdot \left(\sum_{\wtilde m\in\MNd}2^p(N+1)^{-d}\norm{f}_{\Wkp[n][p][\cube^d]}^p+\sum_{\wtilde m\in\MNd}\sum_{\substack{m\in\MNd,\vspace{0.2em}\\\norm[\tiny]{m-\wtilde m}_{\infty} \leq 1}} \norm{\widetilde f}_{\Wkp[n][p][\Omega_{m,N}]}^p\right) \\
		&\leq C N^{-(n-k-\drate)p}\left(\norm{f}^p_{\Wkp[n][p][\cube^d]}+ 3^d \sum_{\wtilde m\in\MNd} \norm{\widetilde f}_{\Wkp[n][p][\Omega_{\wtilde m,N}]}^p\right), \numberthis\label{eq:tilde_patch_2}
	\end{align*}
	where the first step follows from plugging in Equation~\eqref{eq:tilde_patch}, the second step follows from H{\"o}lder's inequality (with $q\coloneqq1-1/p$) and the last step follows from the definition of $\Omega_{\wtilde m,N}$. Moreover, we use in the second and the last step the fact that the number of neighbors of a particular patch is bounded by $3^{d}-1$. 
	To conclude Step 4b we note that from the definition of $\Omega_{\wtilde m, N}$ it follows that there exist $2^d$ disjoint subsets $\mathcal{M}_i\subset\{0,\ldots,N\}^d$ such that $\bigcup_{i=1,\ldots,2^d}\mathcal{M}_i = \{0,\ldots,N\}^d$ and $\Omega_{m_1, N}\cap\Omega_{m_2, N}=\emptyset$ for all $m_1,m_2\in\mathcal{M}_i$ with $m_1\neq m_2$ and all $i=1,\ldots,2^d$. From this we get
	\begin{equation}\label{eq:2dtrick}
	    \sum_{\wtilde m\in\MNd} \norm{\widetilde f}_{\Wkp[n][p][\Omega_{\wtilde m,N}]}^p=\sum_{i=1,\ldots,2^d}\sum_{\wtilde m\in\mathcal{M}_i} \norm{\widetilde f}_{\Wkp[n][p][\Omega_{\wtilde m,N}]}^p\leq 2^d \norm{\widetilde f}_{\Wkp[n][p][\bigcup_{\wtilde m\in\MNd}\Omega_{\wtilde m,N}]}^p
	\end{equation}
	and, finally, together with Remark~\ref{remark:extension_operator}
	\begin{equation}\label{eq:sum_up_the_sum}
	    \sum_{\wtilde m\in\MNd} \norm{\widetilde f}_{\Wkp[n][p][\Omega_{\wtilde m,N}]}^p\leq 2^d \norm{\widetilde f}_{\Wkp[n][p][\bigcup_{\wtilde m\in\MNd}\Omega_{\wtilde m,N}]}^p\leq C \norm{f}^p_{\Wkp[n][p][\cube^d]}.
	\end{equation}
	\textbf{Step 4c (Wrap it all up)}: Combining Equation~\eqref{eq:tilde_patch_2} with~Equation~\eqref{eq:sum_up_the_sum} from Step 4b and inserting it into Equation~\eqref{eq:global_max} together with the estimate in Equation~\eqref{eq:distance_to_one} from Step 4a finally yields
	\[
		\norm{f-f_N}_{\Wkp[k][p][\cube^d]}\leq C N^{-(n-k-\drate)} \norm{f}_{\Wkp[n][p][\cube^d]},
		\]
	for all $N\geq \widetilde N\coloneqq \max\{N_1, N_2\}$ and a constant $C=C(n,d,p,k)>0$. The linearity of $T_k$, $k\in\{0,\dots,j\}$ is a consequence of the linearity of the averaged Taylor polynomial (cf.~\cite[Remark~B.8]{guhring2019error}).
\end{proof}

\subsection{Approximation of Localized Polynomials by Neural Networks}\label{app:ApproxLocPol}

The goal of this subsection is to demonstrate how to approximate sums of  localized polynomials $\sum_{\mathrm{p}} \phi_{\mathrm{p}}\mathrm{poly}_{\mathrm{p}}$ by neural networks.
Corollary \ref{cor:approximate_multiplication} is the foundation for the following results which implements a neural network that approximates the multiplication of multiple inputs:

\begin{lemma}\label{lemma:network_multiplikation}
Let $d,m,K\in\N,j\in\N_0$ and $N\geq 1$, $\mu\geq 0, c>0$ be arbitrary, and let $\varrho\in W^{j,\infty}_{\mathrm{loc}}(\R)$ fulfill the assumptions of Proposition \ref{prop:MonApp} for $n=3,~r=2$. Then there are constants $C(d,m,c,k)>0$ such that the following holds:

	For any $\eps\in \epsin$, and any neural network $\Phi$ with $d$-dimensional input and $m$-dimensional output and with number of layers and nonzero weights all bounded by $K$, such that 
	\begin{equation*}
		\norm{[\act{\Phi}]_l}_{\Wkp[k][\infty][\cube^d]}\leq c N^{k+\drate},
		\end{equation*}
		for $k\in\{0,\dots,j\}$, $l=1,\ldots,m$ and $x\in\cube^d$ there exists a neural network $\Psi_{\eps,\Phi}$ with $d$-dimensional input and one-dimensional output, and with 
		\begin{enumerate}[(i)]
		    \item number of layers and nonzero weights all bounded by $CK $;
		    \item $\norm*{\act{\Psi_{\eps,\Phi}}-\prod_{l=1}^n [\act{\Phi}]_l}_{\Wkp[k][\infty][\cube^d]} \leq C  N^{k+\drate} \eps$;
		\item \label{item:bounding_range_nnmult}$\pabs{\act{\Psi_{\eps,\Phi}}}_{\Wkp[k][\infty][\cube^d]}\leq CN^{k+\drate}$;
		\item $\bweights{\Psi_{\eps,\Phi}}\leq C\max\{\bweights{\Phi},\eps^{-2}\}$.
		\end{enumerate}
\end{lemma}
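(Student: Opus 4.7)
The plan is to construct $\Psi_{\eps,\Phi}$ by iteratively composing the approximate multiplication network $\apmult_{\widetilde\eps}$ from Corollary~\ref{cor:approximate_multiplication} with the given network $\Phi$. Write $h_l\coloneqq[\act{\Phi}]_l$ and $g_l\coloneqq\prod_{i=1}^l h_i$. Starting from a network $\Psi^{(1)}$ whose realization is $h_1$ (obtained by post-composing $\Phi$ with a projection onto its first coordinate), I would recursively define $\Psi^{(l+1)}$ by feeding $\Psi^{(l)}$ and a synchronized copy of the $(l{+}1)$-th output coordinate of $\Phi$ into $\apmult_{\widetilde\eps}$, using the parallelization from Lemma~\ref{lem:ParalSame} and, where needed, the approximate identity from Corollary~\ref{cor:ApproxIdent} to equalize layer depths. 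The final network is $\Psi_{\eps,\Phi}\coloneqq\Psi^{(m)}$, with $\widetilde\eps$ chosen proportional to $\eps$ (with the constant of proportionality depending on $m,d,c,k$).

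The error estimate (ii) proceeds by induction on $l$ via the decomposition
\[
R_\rho(\Psi^{(l+1)})-g_{l+1}=\bigl[R_\rho(\apmult_{\widetilde\eps})(R_\rho(\Psi^{(l)}),h_{l+1})-R_\rho(\Psi^{(l)})\cdot h_{l+1}\bigr]+\bigl[R_\rho(\Psi^{(l)})-g_l\bigr]\cdot h_{l+1}.
\]
The first bracket is controlled by Corollary~\ref{cor:approximate_multiplication}(i) together with the multi-dimensional chain rule from Corollary~\ref{cor:composition_norm} (specifically Equation~\eqref{eq:mutli_dim_chainrule_mu}), since $\apmult_{\widetilde\eps}$ is applied to arguments whose $W^{k,\infty}$-seminorms scale like $N^{k+\drate}$. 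The second bracket is handled by the inductive hypothesis combined with the product rule from Lemma~\ref{lemma:product_rule_bound}. Unrolling the recursion through $m$ steps yields (ii).

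Property (iii) then follows from (ii) by the triangle inequality, noting that $g_m$ itself admits a $W^{k,\infty}$-seminorm bound of the form $CN^{k+\drate}$ via an $m$-fold application of Lemma~\ref{lemma:product_rule_bound}. The complexity bound (i) comes from the fact that each recursion step appends to $\Psi^{(l)}$ a constant-size approximate multiplication network plus a constant-depth approximate identity, so both $L(\Psi^{(l)})$ and $M(\Psi^{(l)})$ grow at most linearly in $l\leq m$ with a universal constant; together with the $K$-bound on $\Phi$ this yields an overall bound of the form $CK$. For (iv), Corollary~\ref{cor:approximate_multiplication}(iv) and Corollary~\ref{cor:ApproxIdent}(iv) give weight magnitudes of order $\widetilde\eps^{-2}\sim\eps^{-2}$ and $\widetilde\eps^{-1}\sim\eps^{-1}$ respectively, while Lemma~\ref{lemma:one_layer_concat} tracks how these magnitudes transform under the concatenations with $\Phi$.

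The main obstacle will be the careful bookkeeping of Sobolev seminorm bounds through the iteration: one must verify that $R_\rho(\Psi^{(l)})$ retains a bound of the form $C_l\,N^{k+\drate}$ with $C_l$ independent of $N$ and depending only on $l,d,c,k$, and that the chain-rule estimates for composing with $\apmult_{\widetilde\eps}$ respect precisely this scaling (this is what makes Equation~\eqref{eq:mutli_dim_chainrule_mu} the appropriate tool rather than the cruder Equation~\eqref{eq:multi_dim_chainrule}). A secondary subtlety is choosing the accuracy of the identity networks used to synchronize layer depths small enough (polynomially in $\eps$) so as not to spoil the inductive error bound, while still respecting the $\eps^{-2}$ budget on weights.
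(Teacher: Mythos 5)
Your proposal follows essentially the same route as the paper's proof: induction on the number of factors, feeding the running product and the next coordinate into the approximate multiplication network $\apmult$ from Corollary~\ref{cor:approximate_multiplication}, synchronizing depths with the approximate identity from Corollary~\ref{cor:ApproxIdent}, and controlling the Sobolev error by combining Corollary~\ref{cor:composition_norm} (in the $\eqref{eq:mutli_dim_chainrule_mu}$ form) with the product rule Lemma~\ref{lemma:product_rule_bound}; the complexity and weight bounds are tracked via Lemma~\ref{lem:ParalSame} and Lemma~\ref{lemma:one_layer_concat}, exactly as in the paper. The only organizational difference is cosmetic --- you build up from the first coordinate, the paper peels off the last --- and this does not change the argument.

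One small inaccuracy worth flagging: your displayed error decomposition
\[
R_\rho(\Psi^{(l+1)})-g_{l+1}=\bigl[R_\rho(\apmult_{\widetilde\eps})(R_\rho(\Psi^{(l)}),h_{l+1})-R_\rho(\Psi^{(l)})\cdot h_{l+1}\bigr]+\bigl[R_\rho(\Psi^{(l)})-g_l\bigr]\cdot h_{l+1}
\]
is stated as if $\apmult_{\widetilde\eps}$ receives $h_{l+1}$ directly, but the actual second input to $\apmult_{\widetilde\eps}$ is $h_{l+1}$ routed through the approximate identity network you introduced to equalize layer depths, call it $\widetilde h_{l+1}$. The algebraically correct split therefore has three terms, not two: the $\apmult$ approximation error applied to $(R_\rho(\Psi^{(l)}),\widetilde h_{l+1})$, the inductive error $(R_\rho(\Psi^{(l)})-g_l)\cdot\widetilde h_{l+1}$ (or $\cdot h_{l+1}$ after a further triangle-inequality step), and an additional term of the form $R_\rho(\Psi^{(l)})\cdot(\widetilde h_{l+1}-h_{l+1})$ accounting for the identity error. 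This is precisely the three-term decomposition the paper uses in Equation~\eqref{eq:induction_add_zero}. You correctly identify this as a ``secondary subtlety'' about the identity accuracy, so the gap is one of exposition rather than idea; just make sure the identity error enters the estimate explicitly, since otherwise the inductive bound would silently omit a contribution of the same order as the other two.
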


\begin{proof}
	We show by induction over $m\in\N$ that the statement holds. To make the induction argument easier we will additionally show that the network $\Psi_{\eps,\Phi}$ can be chosen such that the first $L(\Phi)-1$ layers of $\Psi_{\eps,\Phi}$ and $\Phi$ coincide.

	If $m=1$, then we can choose $\Psi_{\eps,\Phi}=\Phi$ for any $\eps\in\epsin$ and the claim holds. 
	
	Now, assume that the claim holds for an arbitrary, but fixed $m\in\N$. We show that it also holds for $m+1$. For this, let $\eps \in\epsin$ and let $\Phi=((A_1,b_1),(A_2,b_2),\dots,(A_L,b_L))$ be a neural network with $d$-dimensional input and $(m+1)$-dimensional output and with number of layers, and nonzero weights all bounded by $K$, where each $A_l$ is an $N_l\times N_{l-1}$ matrix, and $b_l\in\R^{N_l}$ for $l=1,\ldots L$.
	
	\textbf{Step 1 (Invoking induction hypothesis)}: We denote by $\Phi_m$ the neural network with $d$-dimensional input and $m$-dimensional output which results from $\Phi$ by removing the last output neuron and corresponding weights. In detail, we write 
	\[
		A_L=\bmat{c}{
			A_L^{(1,m)}\\[1em]
			a_L^{(m+1)}
		}\quad\text{and}\quad
		b_L=
		\bmat{c}{
			b_L^{(1,m)}\\[1em]
			b_L^{(m+1)}
		},
		\] where $A_L^{(1,m)}$ is a $m\times N_{L-1}$ matrix and $a_L^{(m+1)}$ is a $1\times N_{L-1}$ vector, and $b_L^{(1,m)}\in\R^m$ and $b_L^{(m+1)}\in\R^1$. Now we set 
		\[
			\Phi_m\coloneqq\Big((A_1,b_1),(A_2,b_2),\dots,(A_{L-1},b_{L-1}),\Big(A_L^{(1,m)},b_L^{(1,m)}\Big)\Big).
			\]
		Using the induction hypothesis we get that there is a neural network $$\Psi_{\eps,\Phi_m}=((A'_1,b'_1),(A'_2,b'_2),\dots,(A'_{L'},b'_{L'}))$$ with $d$-dimensional input and one-dimensional output, and at most $KC$ layers and nonzero weights such that 
	\begin{equation*}
		\norm*{\act{\Psi_{\eps,\Phi_m}}-\prod_{l=1}^m [\act{\Phi_m}]_l}_{\Wkp[k][\infty][\cube^d]} \leq C N^{k+\drate} \eps,
	\end{equation*}
	and $\pabs{\act{\Psi_{\eps,\Phi_m}}}_{\Wkp[k][\infty][\cube^d]}\leq C N^{k+\drate}$. Moreover, we have that $\bweights{\Phi_m}\leq \bweights{\Phi}$, so that there we can estimate $\bweights{\Psi_{\eps,\Phi_m}}\leq C\max\{\bweights{\Phi},\eps^{-2}\}$.
	Furthermore, we can assume that the first $L-1$ layers of $\Psi_{\eps,\Phi_m}$ and $\Phi_m$ coincide and, thus, also the first $L-1$ layers of $\Psi_{\eps,\Phi_m}$ and $\Phi$, i.e.\ $A_l=A'_l$ for $l=1,\ldots,L-1$. 
	
	\textbf{Step 2 (Combining $\Psi_{\eps,\Phi_m}$ and $\big[\actbig{\Phi}\big]_{m+1}$)}: Now, we construct a network 
	$\widetilde \Psi_{\eps,\Phi}$ where the first $L-1$ layers of $\widetilde \Psi_{\eps,\Phi}$ and $\Psi_{\eps,\Phi_m}$ (and, thus, also of $\Phi$) coincide (by definition), and $\widetilde \Psi_{\eps,\Phi}$ has two-dimensional output with  $\big[\actbig{\wtilde\Psi_{\eps,\Phi}}\big]_1=\actbig{\Psi_{\eps,\Phi_m}}$ and $\big[\actbig{\wtilde\Psi_{\eps,\Phi}}\big]_2\approx\big[\actbig{\Phi}\big]_{m+1}$.
	For this, we add the formerly removed neuron with corresponding weights back to the $L$-th layer of $\Psi_{\eps,\Phi_m}$ and approximately pass the output through to the last layer. Let $\Phi^{L'-L+1,c,1}_{\eps}=((A^{\mathrm{id}}_1,b^{\mathrm{id}}_1),\ldots,(A^{\mathrm{id}}_{L'-L+1},b^{\mathrm{id}}_{L'-L+1}))$ be the network from~Corollary~\ref{cor:ApproxIdent}.
	We define 
	\begin{align*}
	&\widetilde \Psi_{\eps,\Phi}\coloneqq\\
	&\left((A'_i,b'_i)_{i=1}^{L-1},
	\left(
		\bmat{c}{
			A'_{L}\\[1em]
			A_1^{\mathrm{id}}a^{(m+1)}_L 
		},
	\bmat{c}{
		 b'_{L}\\[1em]
		 A_1^{\mathrm{id}}b_L^{(m+1)} + b_1^{(m+1)}
	} \right),
	\left(
		\bmat{c}{
			A'_{L+1}\\[1em]
			A_2^{\mathrm{id}} 
		},
	\bmat{c}{
		 b'_{L+1}\\[1em]
		 b_2^{\mathrm{id}}
	}\right),\ldots \right.\\
&\qquad \qquad\qquad \qquad	\qquad \qquad\qquad \qquad\qquad\qquad\qquad\qquad\ldots\left.
	\left(
		\bmat{c}{
			A'_{L'}\\[1em]
			A_{L'-L+1}^{\mathrm{id}}
		},
	\bmat{c}{
		 b'_{L'}\\[1em]
		 b_{L'-L+1}^{\mathrm{id}}
	}\right)\right).
	\end{align*}
	Counting the number of nonzero weights of $\wtilde\Psi_{\eps,\Phi}$ we get with Lemma~\ref{lemma:one_layer_concat}~(\ref{item:ConcId}) that
	\begin{align*}
	    M(\wtilde\Psi_{\eps,\Phi})\leq M(\Psi_{\eps,\Phi_m})+\underbrace{M(\Phi)}_{\text{from }a_L^{(m+1)},b_L^{(m+1)}}+\underbrace{4(L'-L+1)}_{\vphantom{a_L^{(m+1)}}\text{from approximative identity} }\leq C K+K+CK\leq CK\numberthis\label{eq:number_weights},
	\end{align*}
	where we used in the second step the induction hypothesis twice together with the assumption on $\Phi$. Similarly, we get the statement for $L(\wtilde\Psi_{\eps,\Phi})$. Furthermore, $\bweights{\wtilde\Psi_{\eps,\Phi}}\leq C\max\{\bweights{\Phi},\eps^{-2}\}$.
	
	Next, we want to apply the approximate multiplication network from Corollary~\ref{cor:approximate_multiplication} to the output of $\wtilde\Psi_{\eps,\Phi}$. For this, we need to find a bounding box for the range of $\actbig{\wtilde\Psi_{\eps,\Phi}}$. We have 
		\begin{equation*}
			\norm{\act{\Psi_{\eps,\Phi_m}}}_{\Linfc} \leq C\quad\text{and}\quad\norm{[\act{\widetilde{\Psi}_{\eps,\Phi}}]_{2}}_{\Linfc}\leq c+\epsilon\leq c+1,
		\end{equation*}
		and get for $B\coloneqq \max\{C,c+1\}$ that $\ran\act{\wtilde\Psi_{\eps,\Phi}}\subset [-B,B]^2$.
			Now, we denote by $\apmult$ the network from Corollary~\ref{cor:approximate_multiplication} with $B=B$ and accuracy $\eps$ and define 
		\[
			\Psi_{\eps,\Phi}\coloneqq\apmult\conc\wtilde \Psi_{\eps,\Phi}.
			\] 
			
	\textbf{Step 3 ($\Psi_{\eps,\Phi}$ fulfills induction hypothesis for $m+1$)}:
			\textbf{ad (i):} Clearly, $\Psi_{\eps,\Phi}$ has $d$-dimensional input, one-dimensional output and, combining Equation~\eqref{eq:number_weights} with (\ref{item:network_complexity_apmult}) of Corollary~\ref{cor:approximate_multiplication} as well as Lemma \ref{lemma:one_layer_concat}~(iii), at most $ CK$
            nonzero weights. 
	
	\textbf{ad (ii):} The first $L-1$ layers of $\Psi_{\eps,\Phi}$ and $\Phi$ coincide and for the approximation properties it holds that 
	\begin{align*}
		&\norm*{\act{\Psi_{\eps,\Phi}}-\prod_{l=1}^{m+1} [\act{\Phi}]_l}_{\Wkp[k][\infty][\cube^d]}\\
		&\alplus = \norm*{\act{\apmult}\circ\actbig{\wtilde\Psi_{\eps,\Phi}}-[\act{\Phi}]_{m+1}\cdot\prod_{l=1}^{m} [\act{\Phi}]_l}_{\Wkp[k][\infty][\cube^d]}\\
		&\alplus \leq \norm[\Big]{\act{\apmult}\circ(\act{\Psi_{\eps,\Phi_m}},[\actbig{\wtilde\Psi_{\eps,\Phi}}]_2) -\act{\Psi_{\eps,\Phi_m}}\cdot[\actbig{\wtilde\Psi_{\eps,\Phi}}]_2}_{\Wkp[k][\infty][\cube^d]}\\
		&\alplus \alplus+\norm[\Big]{\act{\Psi_{\eps,\Phi_m}}\left([\actbig{\wtilde\Psi_{\eps,\Phi}}]_2-[\act{\Phi}]_{m+1}\right)}_{\Wkp[k][\infty][\cube^d]}\\ &\alplus\alplus \alplus+\norm*{[\act{\Phi}]_{m+1}\cdot\big(\act{\Psi_{\eps,\Phi_m}}-\prod_{l=1}^{m} [\act{\Phi}]_l\big)}_{\Wkp[k][\infty][\cube^d]}.\numberthis\label{eq:induction_add_zero}
	\end{align*}
	We continue by considering the first term of the Inequality \eqref{eq:induction_add_zero} and bound the $k$-semi-norm of this term. We apply the chain rule from Corollary~\ref{cor:composition_norm} for $g:\R^2\to\R$ with $g(x,y)=\act{\apmult}(x,y)-x\cdot y$ and $f:\R^d\to\R^2$ with $f=\act{\wtilde\Psi_{\eps,\Phi}}$. We get
	\begin{align*}
		&\pabs[\Big]{\act{\apmult}\circ(\act{\Psi_{\eps,\Phi_m}},[\actbig{\wtilde\Psi_{\eps,\Phi}}]_2) -\act{\Psi_{\eps,\Phi_m}}\cdot[\actbig{\wtilde\Psi_{\eps,\Phi}}]_2}_{\Wkp[k][\infty][\cube^d]}\\
		&\alplus\leq C\sum_{i=1}^k\pabs{\act{\apmult}(x,y)-x\cdot y}_{\Wkp[i][\infty][\intervalo{-B}{B}^2;dxdy]} N^{k+\drate}\\
		&\alplus\leq  Ck\cdot\norm{\act{\apmult}(x,y)-x\cdot y}_{\Wkp[j][\infty][\intervalo{-B}{B}^2;dxdy]} N^{k+\drate}\\
		&\alplus\leq C\eps N^{k+\drate},\numberthis\label{eq:1term} 
	\end{align*}
	where we used the induction hypothesis together with $\pabs{[\actbig{\wtilde\Psi_{\eps,\Phi}}]_2}_{\Wkp[k][\infty][\cube^d]}\leq c N^{k+\drate}$ (which follows from the properties of the approximate identity network from Corollary~\ref{cor:ApproxIdent} together with the chain rule) in the third step and assumed that $c\leq C$. Combining the statements of the semi-norms then yields the required bound for the norm. For the second term we have by the product rule and the chain rule 
	\begin{align*}
	&\norm[\Big]{\act{\Psi_{\eps,\Phi_m}}\left([\actbig{\wtilde\Psi_{\eps,\Phi}}]_2-[\act{\Phi}]_{m+1}\right)}_{\Wkp[k][\infty][\cube^d]}\\
		&\alplus \leq \sum_{i=0}^k\norm{\act{\Psi_{\eps,\Phi_m}}}_{\Wkp[i][\infty][\cube^d]}\cdot\norm*{[\actbig{\wtilde\Psi_{\eps,\Phi}}]_2-[\act{\Phi}]_{m+1}}_{\Wkp[k-i][\infty][\cube^d]}\\
		&\alplus\leq \sum_{i=0}^k c N^{i+\dratej{i}}\cdot C\eps N^{k-i+\dratej{k-i}}\leq kcCN^{k+\drate}\eps.\numberthis\label{eq:2term}
	\end{align*}
	
	To estimate the last term of~\eqref{eq:induction_add_zero} we apply the product rule from Lemma~\ref{lemma:product_rule_bound} and get
	\begin{align*}
		&\norm*{[\act{\Phi}]_{m+1}\cdot\Big(\act{\Psi_{\eps,\Phi_m}}-\prod_{l=1}^{m} [\act{\Phi}]_l\Big)}_{\Wkp[k][\infty][\cube^d]}\\
		&\alplus \leq \sum_{i=0}^k\norm{[\act{\Phi}]_{m+1}}_{\Wkp[i][\infty][\cube^d]}\cdot\norm*{\act{\Psi_{\eps,\Phi_m}}-\prod_{l=1}^{m} [\act{\Phi}]_l}_{\Wkp[k-i][\infty][\cube^d]}\\
		&\alplus\leq \sum_{i=0}^k c N^{i+\dratej{i}}\cdot C N^{k-i+\dratej{k-i}}\eps\leq kcCN^{k+\drate}\eps.\numberthis\label{eq:3term}
	\end{align*}
	For the second step, we used again the induction hypothesis together with $$\pabs{[\act{\Phi}]_{m+1}}_{\Wkp[k][\infty][\cube^d]}\leq c N^{k+\drate}.$$ Combining~\eqref{eq:induction_add_zero} with~\eqref{eq:1term}, \eqref{eq:2term} and~\eqref{eq:3term} yields 
	\begin{equation*}
		\norm*{\act{\Psi_{\eps,\Phi}}-\prod_{l=1}^{m+1} [\act{\Phi}]_l}_{\Wkp[k][\infty][\cube^d]}\leq C N^{k+\drate}\eps.
	\end{equation*}

	\textbf{ad (iii):} The estimate 
	\[
		\pabs{\act{\Psi_{\eps,\Phi}}}_{\Wkp[k][\infty][\cube^d]}\leq CN^{k+\drate}.
		\]
can be shown similarly as above.	

	\textbf{ad (iv):} Finally, we need to derive a bound for the absolute values of the weights. From the definition of $\Psi_{\eps,\Phi}$ together with~Lemma~\ref{lemma:one_layer_concat}~(iii) we get
	\[\bweights{\Psi_{\eps,\Phi}}=\bweights{\apmult\conc\wtilde \Psi_{\eps,\Phi}}\leq C\cdot\max\{\eps^{-2}, \bweights{\wtilde\Psi_{\eps,\Phi}}\}.
	\]
	From $\bweights{\wtilde\Psi_{\eps,\Phi}}\leq C\max\{\bweights{\Phi},\eps^{-2}\}$ (see Step 2) it follows that $\bweights{\Psi_{\eps,\Phi}}\leq C\max\{\bweights{\Phi},\eps^{-2}\}$. This concludes the proof.
\end{proof}

In the last part of this subsection, we are finally in a position to construct neural networks which approximate sums of localized polynomials.
\begin{lemma}\label{lemma:network_polynomial_approximation}
	Let $j,\tau\in\N_{0},$, $d,N\in \N$,  $k\in\{0,\ldots,j\}$, Additionally, let $\varrho$ be such that it fulfills the assumptions of Proposition \ref{prop:MonApp} (for $n=3,$ $r=2$). Let $n\in\N_{\geq k+1}$, $1\leq p\leq\infty$, and $\mu>0$. 
	Assume that $\left(\Psi^{(j,\tau,N,s)}\right)_{N\in\N,s\geq 1}$ be the exponential (respectively polynomial, exact) $(j,\tau)$-PU from Definition \ref{def:partition_of_unity}. For $N\in\N,$ set 
    \begin{align*}
        s\coloneqq \begin{cases} N^\mu,&\text{if exponential PU,}\\
        N^{\frac{2d/p+d+n}{D}},&\text{if polynomial PU,}\\
		1, &\text{if exact PU},
		\end{cases}
    \end{align*}
	  Then, there is a constant $C=C(n,d,p,k)>0$ with the following properties:
	  
	Let $\eps\in \epsin$, $f\in\Wkp[n][p][\cube^d]$ and $p_m(x)\coloneqq p_{f,m}(x)=\sum_{\pabs{\alpha}\leq n-1} c_{f,m,\alpha}x^\alpha$ for $m\in\MNd$ be the polynomials from Lemma~\ref{lemma:polynomial_approximation}. Then there is a neural network $\Phi_{P,\eps}=\Phi_{P,\eps}(f,d,n,N,\mu,\eps)$ with $d$-dimensional input and one-dimensional output, with at most $C$ layers and $C (N+1)^d$ nonzero weights, such that  
		\[
		\norm*{\sum_{m\in\MNd}\phi^s_m p_m -\act{\Phi_{P,\eps}}}_{\Wkp[k][p][\cube^d]}\leq C \norm{f}_{\Wkp[n][p][\cube^d]} \eps,
		\]
		and $\bweights{\Phi_{P,\eps}}\leq C\norm{f}_{\Wkp[n][p][\cube^d]} \eps^{-2}s^2 N^{2(d/p+d+ k)+d/p + d}$.
		
\end{lemma}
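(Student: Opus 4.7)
The plan is to approximate each summand $\phi_m^s\,p_m$ by its own subnetwork and then assemble the $(N+1)^d$ subnetworks into $\Phi_{P,\eps}$ via parallelization (Lemma~\ref{lem:ParalSame}), combining the scalar outputs with a single linear layer whose weights are the polynomial coefficients $c_{f,m,\alpha}$. Because $n$ and $d$ are fixed, each polynomial $p_m$ has only $O(1)$ monomials, so it is enough to approximate $\phi_m^s(x)\cdot x^\alpha$ for each $|\alpha|\leq n-1$ separately; the total cost will then be $O((N+1)^d)$ nonzero weights and constantly many layers.

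To build the subnetwork for a single term I would exploit the factorization
$\phi_m^s(x)\cdot x^\alpha = \prod_{l=1}^d \psi^s(3N(x_l-m_l/N))\cdot\prod_{l=1}^d x_l^{\alpha_l}$, which is a product of $d+|\alpha|=O(1)$ one-dimensional factors. By Definition~\ref{def:partition_of_unity}~(\ref{item:pou_networkGeneralSigmoid}) each bump factor is exactly realized by a two-layer $\varrho$-network of constant size (with weight bound $CsN$), while the coordinate factors $x_l$ are trivially implementable. Using the approximate-identity network (Corollary~\ref{cor:ApproxIdent}) to align depths and Lemma~\ref{lem:ParalSame} to stack in parallel, I assemble a parent network $\Phi_{m,\alpha}$ whose component outputs are exactly these factors, each satisfying $\|[R_\varrho(\Phi_{m,\alpha})]_l\|_{\Wkp[k][\infty][\cube^d]}\leq CN^{k+\drate}$. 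Applying Lemma~\ref{lemma:network_multiplikation} with accuracy $\widetilde\eps$ then yields a scalar network $\Psi_{m,\alpha}$ of constant size with
$\|\act{\Psi_{m,\alpha}}-\phi_m^s x^\alpha\|_{\Wkp[k][\infty][\cube^d]}\leq CN^{k+\drate}\widetilde\eps$
and $\bweights{\Psi_{m,\alpha}}\leq C\max\{sN,\widetilde\eps^{-2}\}$.

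The remaining task is to choose $\widetilde\eps$ and to aggregate local errors into a global $\Wkp[k][p][\cube^d]$-estimate. Setting $\act{\Phi_{P,\eps}}\coloneqq\sum_{m,\alpha}c_{f,m,\alpha}\act{\Psi_{m,\alpha}}$, using the coefficient bound $|c_{f,m,\alpha}|\leq CN^{d/p}\|\widetilde f\|_{\Wkp[n][p][\Omega_{m,N}]}$ of Remark~\ref{remark:polynomial_coefficients}, converting the $W^{k,\infty}$-bound on each patch $\Omega_{\widetilde m,N}$ to $W^{k,p}$ (gaining a factor $N^{-d/p}$), summing over the $(N+1)^d$ patches, and invoking the $2^d$-covering argument together with the extension estimate from Equation~\eqref{eq:sum_up_the_sum}, I obtain a global bound of order $CN^{d+k+\drate}\|f\|_{\Wkp[n][p][\cube^d]}\widetilde\eps$. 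Choosing $\widetilde\eps$ as a constant multiple of $\eps N^{-(d+k+\drate)}$ then yields the required approximation, and the weight estimate follows by combining $\widetilde\eps^{-2}$ from the multiplication network with the factor $sN$ from the bump layers and with the magnitudes $|c_{f,m,\alpha}|$ placed in the output layer.

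The main obstacle is precisely this error aggregation. Unlike the exact partition of unity case where $\phi_m^s$ is compactly supported on $\Omega_{m,N}$, every one of the $O((N+1)^d)$ subnetworks here contributes on every patch, so a naive triangle-inequality bound would introduce an unwanted factor of $(N+1)^d$. The compensation comes from pairing the global $W^{k,\infty}$-bound from Lemma~\ref{lemma:network_multiplikation} with the patchwise $\ell^p$-summability of the coefficients $c_{f,m,\alpha}$ through~\eqref{eq:sum_up_the_sum}---the same mechanism that closes Step~4b of the proof of Lemma~\ref{lemma:polynomial_approximation}---ensuring the final weight bound stays polynomial in $N$ and $\eps^{-1}$ and linear in $\|f\|_{\Wkp[n][p][\cube^d]}$.
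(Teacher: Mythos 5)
Your construction is the same as the paper's: you parallelize subnetworks $\Psi_{m,\alpha}$ (each built from the bump network of Definition~\ref{def:partition_of_unity}~(\ref{item:pou_networkGeneralSigmoid}), an approximation of $x^\alpha$, depth-padding via Corollary~\ref{cor:ApproxIdent}, and the $d+\pabs\alpha$-fold approximate product from Lemma~\ref{lemma:network_multiplikation}), then add one linear output layer carrying the coefficients $c_{f,m,\alpha}$. The overall count of $C(N+1)^d$ weights, constant depth, and the error bound all come out as required, so the plan is sound.

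One step in your error aggregation, however, is misattributed. You claim to gain a factor $N^{-d/p}$ by converting the $W^{k,\infty}$-error to $W^{k,p}$ on each patch $\Omega_{\widetilde m,N}$; but when you then take the $\ell^p$-sum over the $(N+1)^d$ patches you lose exactly a factor $N^{d/p}$, so this conversion is a wash. The $W^{k,\infty}$-error of the multiplication network from Lemma~\ref{lemma:network_multiplikation} is a global bound and does not localize to patches, so no net gain can come from there. The actual saving of $N^{-d/p}$ that lets you take $\widetilde\eps \approx \eps N^{-(d+k+\drate)}$ (rather than the paper's cruder $\widetilde\eps = \eps N^{-(d/p+d+k+\drate)}$) comes from applying H\"older's inequality to the $\ell^1$-sum $\sum_m \pabs{c_{f,m,\alpha}} \lesssim N^{d/p}\sum_m\norm{\widetilde f}_{\Wkp[n][p][\Omega_{m,N}]}$ and then using the $2^d$-covering identity $\sum_m\norm{\widetilde f}^p_{\Wkp[n][p][\Omega_{m,N}]}\lesssim\norm{f}^p$; this yields $\sum_m\pabs{c_{f,m,\alpha}}\lesssim N^d\norm{f}$ rather than the naive $N^{d+d/p}\norm{f}$. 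You do cite Step~4b of Lemma~\ref{lemma:polynomial_approximation} (which contains exactly this H\"older step), so you have the right tool in hand, but the sentence about $W^\infty\to W^p$ conversion should be replaced by the H\"older argument. Also note: the paper itself forgoes this refinement, uses the smaller $\widetilde\eps$, and obtains the weight bound stated in the lemma; with your larger $\widetilde\eps$ you obtain a strictly smaller weight bound, which of course still satisfies the lemma's claim.
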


\begin{proof} As before, we only provide the proof only for the case of an exponential $(j,\tau)$-PU.

	\textbf{Step 1 (Approximating localized monomials $\phi_m^s(x) x^\alpha$)}: Let $|\alpha|\leq n-1,$ $m \in \{0, \dots, N \}^d$ and set $\widetilde \eps\coloneqq \eps N^{-(d/p +d+ k+\drate)}$. By Corollary \ref{cor:ApproxIdent} and inductively using the trick that $|xy-uz|\leq |x(y-z)|+|z(x-u)|$, there is a neural network $\Phi_\alpha$ with $d$-dimensional input and $\pabs{\alpha}$-dimensional output, with two layers, at most $4(n-1)$ nonzero weights bounded in absolute value by $C\widetilde{\eps}^{-1}$ 
	such that
	\begin{equation}\label{eq:inductive_product}
		\norm[\big]{x^\alpha-\prod_{l=1}^{\pabs{\alpha}}[\act{\Phi_\alpha}]_l(x)}_{W^{k,\infty}((0,1)^d;dx)}\leq C\widetilde{\eps}
	\end{equation}
		and
	\begin{equation}\label{eq:phi_poly_bound}
		\norm{[\act{\Phi_\alpha}]_l}_{\Wkp[k][\infty][\cube^d]}\leq \widetilde{\eps}+ 1\leq 2, \quad \text{for all }l=1,\ldots,\pabs{\alpha}. 
	\end{equation}
	Let now $\Phi_m$ be the neural network from Lemma~\ref{prop:partition_of_unity} (\ref{item:pou_networkGeneralSigmoid}) (for $s=N^\mu$) and define the network
	\[
		\Phi_{m,\alpha}\coloneqq \mathrm{P}(\Phi_m,\Phi_\alpha,\Phi_{n-1-|\alpha|,2}),
		\]
		where the parallelization is provided by Lemma \ref{lem:ParalSame} and $\Phi_{n-1-|\alpha|,2}=\left((0_{d,d},0_d),(0_{n-1-|\alpha|,d},1_{n-1-|\alpha|}) \right)$. Consequently, $\Phi_{m,\alpha}$ has $2\leq K_0$ layers and $C+4(n-1)\leq K_0$ nonzero weights for a suitable constant $K_0=K_0(n,d)\in\N$, $\bweights{\Phi_{m,\alpha}}\leq C \max\{\widetilde{\eps}^{-1},N^{1+\mu}\}$ and $\norm{\prod_{l=1}^{n-1+d}[\act{\Phi_{m,\alpha}}(x)]_l-\phi_m^s(x) x^\alpha}_{W^{k,\infty}((0,1)^d);dx}\leq C\widetilde{\eps}$. Moreover, as a consequence of Lemma~\ref{prop:partition_of_unity} (\ref{item:pou_networkGeneralSigmoid}) together with Equation~\eqref{eq:phi_poly_bound} we have 
	\begin{equation*}
		\norm{[\act{\Phi_{m,\alpha}}]_l}_{\Wkp[k][\infty][\cube^d]}\leq C N^{k+\drate},\quad\text{for all }l=1,\ldots,n-1+d.
	\end{equation*}
	To construct an approximation of the localized monomials $\phi_m^s(x) x^\alpha$, let $\Psi_{\widetilde \eps,(m,\alpha)}$ be the neural network provided by Lemma~\ref{lemma:network_multiplikation} (with $\Phi_{m,\alpha}$ instead of $\Phi$, $m=\pabs{\alpha}+d\in\N$, $K=K_0\in\N$) for $m\in\{0,\ldots,N\}^d$ and $\alpha\in\N_0^d,\pabs{\alpha}\leq n-1$. Then  $\Psi_{\widetilde \eps,(m,\alpha)} $ has at most $C$ layers (independently of $m,\alpha$), number of nonzero weights and $\bweights{\Psi_{\widetilde \eps,(m,\alpha)}}\leq C\max\{N^{1+\mu},\eps^{-2}N^{2(d/p+d+ k+\drate)}\}$. Moreover, 
	\begin{align*}
		&\norm*{\phi_m^s(x) x^\alpha -\actbig{\Psi_{\widetilde \eps,(m,\alpha)}}(x)}_{\Wkpd[k][\infty][\cube^d]{x}}\\
		&\leq\norm[\big]{\phi_m^s(x) x^\alpha-\prod_{l=1}^{n-1+d}[\act{\Phi_{m,\alpha}}(x)]_l}_{\Wkpd[k][\infty][\cube^d]{x}} + \norm[\big]{\prod_{l=1}^{n-1+d}[\act{\Phi_{m,\alpha}}]_l- \actbig{\Psi_{\widetilde \eps,(m,\alpha)}}}_{\Wkp[k][\infty][\cube^d]}\\
		&\leq C N^{k+\drate} \widetilde \eps\leq C\eps N^{-d/p-d},
	\end{align*} 
	where we used Equation~\eqref{eq:inductive_product} together with the product rule for the last step.

\textbf{Step 2 (Constructing $\Phi_{P,\eps}$)}:
We set 
	\[
		T\coloneqq\pabs{\{(m,\alpha):m\in\MNd, \alpha\in\N_0^d,\pabs{\alpha}\leq n-1\}}.
		\]
		We note that every network $\Psi_{\widetilde{\epsilon},(m,\alpha)}$ has the same number of layers and, by using  Lemma~\ref{lem:ParalSame}, we parallelize the localized polynomial approximations 
	\[
		\mathrm{P}\big(\Psi_{\widetilde \eps,(m,\alpha)}:m\in \{0,\dots,N\}^d, \alpha \in \N_0^d,~|\alpha|\leq n-1 \big)
		\]
		and note that the resulting network has at most $C$ layers and $C T$ nonzero weights bounded in absolute value by $C\max\{N^{1+\mu},\eps^{-2}N^{2(d/p+d+k+\drate)}\}\leq C \eps^{-2}N^{2(d/p+d+ k+\drate)}$. Next, we define the matrix $\As\in\R^{1,T}$ by $\As\coloneqq[c_{f,m,\alpha} :m\in\MNd, \alpha\in\N_0^d,\pabs{\alpha}\leq n-1]$ and the neural network $\Phis\coloneqq((\As,0))$. Finally, we set
	\begin{equation}\label{eq:final_network}
		\PhiPs\coloneqq\Phis\conc\mathrm{P}\big(\Psi_{\widetilde \eps,(m,\alpha)}:m\in\MNd, \alpha\in\N_0^d,\pabs{\alpha}\leq n-1\big).
		\end{equation}
		 From Lemma~\ref{lemma:one_layer_concat}(\ref{item:ConcVector}) we get  $\PhiPs$ is a neural network with $d$-dimensional input and one-dimensional output, with at most $C$ layers and, by Lemma~\ref{lemma:one_layer_concat}, $C T \leq C (N+1)^d$ nonzero weights. For the absolute values of the weights it holds that 
		 \begin{align*}
		 \bweights{\PhiPs}&\leq (N+1)^dC\norm{f}_{\Wkp[n][p][\cube^d]}N^{d/p} \eps^{-2}N^{2(d/p+d+ k+\drate)}\\
		 &\leq C\norm{f}_{\Wkp[n][p][\cube^d]} \eps^{-2}N^{2(d/p+d+ k+\drate)+d/p + d}
		 \end{align*}
		 where we used the bound for the coefficients $c_{f,m,\alpha}$ from~Remark~\ref{remark:polynomial_coefficients}. Moreover, we have
		\[
			\act{\PhiPs}=\sum_{m\in\MNd}\sum_{|\alpha|\leq n-1}c_{f,m,\alpha}\Realization(\Psi_{\widetilde \eps,(m,\alpha)}).
		\]
	Note that the network $\Phi_{P,\eps}$ only depends on $p_{f,m}$ (and thus on $f$) via the coefficients $c_{f,m,\alpha}$. 
	
	\textbf{Step 3 (Estimating the approximation error in $\norm{\cdot}_{W^{k,p}}$)}:
		We get 
		\begin{align*}
		&\norm*{\sum_{m\in\MNd}\phi_m^s(x) p_m(x) -\act{\Phi_{P,\eps}}(x)}_{\Wkpd[k][p][\cube^d]{x}}\\
		&\alplus=\norm*{\sum_{m\in\MNd}\sum_{|\alpha|\leq n-1}c_{f,m,\alpha}\Big(\phi_m^s(x) x^\alpha -\actbig{\Psi_{\widetilde \eps,(m,\alpha)}}(x)\Big)}_{\Wkpd[k][p][\cube^d]{x}}\\
			&\alplus \leq\sum_{m\in\MNd}\sum_{|\alpha|\leq n-1}\pabs{c_{f,m,\alpha}}\norm*{\phi_m^s(x) x^\alpha -\actbig{\Psi_{\widetilde \eps,(m,\alpha)}}(x)}_{\Wkpd[k][p][\cube^d]{x}}\\
			&\alplus \leq\sum_{m\in\MNd}\sum_{|\alpha|\leq n-1}\norm{\widetilde f}_{\Wkp[n-1][p][\Omega_{m,N}]}N^{d/p} C \eps N^{-d/p-d},
	\end{align*}
	where we used again the bound for the coefficients $c_{f,m,\alpha}$ together with $\norm{\cdot}_{\Wkp[k][p][\cube^d]}\leq C \norm{\cdot}_{\Wkp[k][\infty][\cube^d]}$ in the last step. Similar as in~Equation~\eqref{eq:2dtrick} we finally have
		\begin{align*}
		\norm*{\sum_{m\in\MNd}\phi_m^s(x) p_m(x) -\act{\Phi_{P,\eps}}(x)}_{\Wkpd[k][p][\cube^d]{x}}
		&\leq C \eps N^{-d} \sum_{m\in\MNd}\norm{\widetilde f}_{\Wkp[n][p][\cube^d]}\\
		&\leq C \eps \norm{f}_{\Wkp[n][p][\cube^d]}.
	\end{align*}
	This concludes the proof.
\end{proof}

\subsection{Putting Everything Together}\label{app:PutTogether}
Now we conclude the proof of Proposition \ref{prop:main}. Again, we only provide the proof for exponential $(j,\tau)$-PUs. The rest follows in a similar manner by adapting the calculations to come accordingly.
\begin{proof}[Proof of Proposition~\ref{prop:main}] We divide the proof into two steps: First, we approximate the function $f$ by a sum of localized polynomials. Afterwards, we proceed by approximating this sum by a neural network.

	For the first step, we set
	\begin{equation}\label{eq:definition_N}
		N\coloneqq\ceil*{\left(\frac{\eps}{2\widetilde{C}}\right)^{-1/(n-k-\drate)}}\quad\text{and}\quad s\coloneqq N^\mu,
	\end{equation}
	where $\widetilde{C}=\widetilde{C}(n,d,p,k)>0$ is the constant from Lemma~\ref{lemma:polynomial_approximation}. Without loss of generality we may assume that $\widetilde{C}\geq 1$. The same lemma yields that if $\Psi^{(j,\tau)}=\Psi^{(j,\tau)}(d,N,\mu)=\left\{\phi_m^s:m\in\MNd\right\}$ is the PU from Lemma~\ref{prop:partition_of_unity} and $\widetilde N =\widetilde N(d,p,\mu,k)$ is the constant from Lemma~\ref{lemma:polynomial_approximation}, then there exist polynomials $p_m(x)=\sum_{\pabs{\alpha}\leq n-1}c_{f,m,\alpha}x^\alpha$ for $m\in\{0,\ldots,N\}^d$ such that
	\begin{align*}
		\norm*{f-\sum_{m\in\{0,\ldots,N\}^d}\phi_m^s p_m}_{\Wkp[k][p][\cube^d]}&\leq \widetilde{C}\left(\frac{1}{N}\right)^{n-k-\drate}
		\leq \widetilde{C} \frac{\eps}{2\widetilde{C}}=\frac{\eps}{2},\numberthis\label{eq:final_triangle_1}
	\end{align*}
	
	for all $\eps\in (0,\widetilde \eps$), where $\widetilde \eps=\widetilde \eps(d,p,\mu,k)>0$ is chosen such that $N\geq \widetilde N$.

	For the second step, let $\widetilde{C}'=\widetilde{C}'(n,d,p,k)$ be the constant from Lemma~\ref{lemma:network_polynomial_approximation} and $\Phi_{P,\eps}$ be the neural network provided by Lemma~\ref{lemma:network_polynomial_approximation} with $\eps/(2\widetilde{C}')$ instead of $\eps$. Then $\Phi_{P,\eps}$ has at most $\widetilde{C}'$ layers and at most
	\begin{align*}
		&\widetilde{C}'\left(\left(\frac{\eps}{2\widetilde{C}'}\right)^{-1/(n-k-\drate)}+2\right)^d
		\leq \widetilde{C}' 3^d \left(\frac{\eps}{2\widetilde{C}'}\right)^{-d/(n-k-\drate)}
		\leq C\eps^{-d/(n-k-\drate)}
	\end{align*}
		nonzero weights. In the first step we have used $(2\widetilde{C}')/\eps\geq 1$. The weights are bounded in absolute value by
		\begin{align*}
		    \bweights{\Phi_{P,\eps}}&\leq \widetilde{C}'\eps^{-2}N^{2(d/p+d k+\drate)+d/p + d}\\ &\leq C\eps^{-2-(2(d/p+d+ k+\drate)+d/p+d)/(n-k-\drate)}=C\eps^{-\theta},
		\end{align*}
				
		for a suitable $\theta=\theta(d,p,k,n,\mu)>0$.
		Additionally, there holds
	\begin{equation}\label{eq:final_triangle_2}
		\norm*{\sum_{m\in\MNd}\phi_m^s p_m -\act{\Phi_{P,\eps}}}_{\Wkp[k][p][\cube^d]}\leq \widetilde{C}' \frac{\eps}{2 \widetilde{C}'}\leq \frac{\eps}{2}.
		\end{equation}
 By applying the triangle inequality as well as Equations~\eqref{eq:final_triangle_1} and \eqref{eq:final_triangle_2} we arrive at
		\begin{align*}
			&\norm*{f-\act{\Phi_{P,\eps}}}_{\Wkp[k][p][\cube^d]}\\
			&\alplus\leq \norm*{f-\sum_{m\in\MNd}\phi_m^s p_m}_{\Wkp[k][p][\cube^d]}+\norm*{\sum_{m\in\MNd}\phi_m^s p_m-\act{\Phi_{P,\eps}}}_{\Wkp[k][p][\cube^d]}\\
			&\alplus \leq\frac{\eps}{2}+\frac{\eps}{2}=\eps,
		\end{align*}
		thereby concluding the proof.
\end{proof}

\section{Proof of Theorem~\ref{thm:main} (Encodability of the Weights)}\label{app:Encod}

We now proceed with the proof of  Theorem {\rm\ref{thm:main}}. 
\begin{proof}[Proof of Theorem {\rm\ref{thm:main}}]
 Let $C=C(d,n,p, \mu,k)>0$, $\theta=\theta(d,n,p,k, \mu)>0$ and $\widetilde \eps=\widetilde \eps(d,p,\mu,k)>0$ be the constants from Proposition~\ref{prop:main} and let $\epsilon \in (0,\min\{1/3,\widetilde{\epsilon}\})$. Moreover, for $f\in \Fndp$, let $$\Phi_{\eps,f}\coloneqq ((A_{\mathrm{sum}},0))\conc\mathrm{P}\big(\Psi_i:i=1,\ldots,T\big)$$
be the neural network from Proposition~\ref{prop:main} (defined in Equation~\eqref{eq:final_network}) with at most $L$ layers and $M(\Phi_{\eps,f})\leq C\cdot\eps^{-d/(n-k-\drate)}$ nonzero weights bounded in absolute value by $C\eps^{-\theta}$, such that 
	\[
		\norm{\act{\Phi_{\eps,f}} - f}_{\Wkp[k][p][\cube^d]}\leq \frac{\eps}{3}.
	\]
We will make use of the following additional properties of $\Phi_{\eps,f}$:
\begin{enumerate}[(i)]
    \item Only the entries of $A_{\mathrm{sum}}$ depend on the function $f$. In other words, the entries of $\Psi_1,\ldots, \Psi_T$ are independent from $f$. They only depend on $\eps,n,d,p,k,\mu$.
    \item There exists $s=s(k,n,d,p)>0$ (we assume w.l.o.g.\ that the same $s$ can be used) such that
    \begin{enumerate}[(a)]
        \item $\norm*{\act{\Psi_i}}_{\Wkp[k][\infty][\cube^d]}\leq \eps^{- s}$ for $i=1,\ldots, T$. This follows from Lemma~\ref{lemma:network_multiplikation}~(\ref{item:bounding_range_nnmult}) in combination with Step~1 and~2 of the proof of Lemma~\ref{lemma:network_polynomial_approximation} and choice of $N$ in~Equation~\eqref{eq:definition_N}.
        \item $T\leq \eps^{-s}$. This follows from the definition of $T$ (see Step~2 of the proof of~Lemma~\ref{lemma:network_polynomial_approximation});
        \item $M(\Phi_{\eps,f})\leq \eps^{-s}.$
    \end{enumerate}
    \item $A_{\mathrm{sum}}=(a_m)_{m=1}^T\in\R^{1,T}$.
    \item The last layer $(A_\mathrm{last},b_\mathrm{last})$ of $\mathrm{P}\big(\Psi_i:i=1,\ldots,T\big)$ has a block diagonal structure, where each block is a vector (see also~Lemma~\ref{lem:ParalSame}). Thus, in every column of $A_\mathrm{last}$ there is at most one nonzero entry.
\end{enumerate}	

We replace the weights in the last layer of $\Phi_{\eps,f}$ by elements from an appropriate set of weights with cardinality bounded polynomially in $\eps^{-1}$ and show that the resulting network is still close enough to $f$. Afterwards, we construct a coding scheme for the entire set of weights.

\textbf{Step 1 (Rounding the weights in $A_{\mathrm{sum}}$)}: 
  We now show that with rounding precision~$\nu\coloneqq 2s+2$ we have for the neural network $$\widetilde{\Phi}_{\eps,f}^{(1)}\coloneqq ((\widetilde A_{\mathrm{sum}},0))\conc\mathrm{P}\big(\Psi_i:i=1,\ldots,T\big)$$ where $\widetilde{A}_{\mathrm{sum}}\in ([-\eps^{-\theta},\eps^{-\theta}]\cap\eps^\nu\Z)^{1, T}$ is the rounded weight matrix $A_{\mathrm{sum}}\in\R^{1, T}$ that 
 \begin{align*}
     \norm{\Realization(\Phi_{\eps, f})-\Realization(\widetilde{\Phi}_{\eps, f}^{(1)})}_{W^{k,p}((0,1)^d)}\leq \eps/3.
 \end{align*}
 Clearly,
 \begin{align*}
     &\norm*{\actbig{(A_{\mathrm{sum}},0)\conc\mathrm{P}\big(\Psi_i:i=1,\ldots,T\big)}-\actbig{(\widetilde A_{\mathrm{sum}},0)\conc\mathrm{P}\big(\Psi_i:i=1,\ldots,T\big)}}_{W^{k,p}((0,1)^d)}\\
     &\alplus\leq \left\|{\sum_{i=1}^T a_i \act{\Psi_i} - \sum_{i=1}^T \widetilde a_i \act{\Psi_i} }\right\|_{\Wkp[k][\infty][\cube^d]}\\
&\alplus\leq \sum_{i=1}^T \pabs{a_i-\widetilde a_i} \norm*{\act{\Psi_i}}_{W^{k,\infty}((0,1)^d)} \\
\expl{rounding precision is $\eps^\nu$}&\alplus\leq \sum_{i=1}^T\eps^{\nu}\norm*{ \act{\Psi_i}}_{W^{k,\infty}((0,1)^d)}  \\
\expl{(ii) and (iii) above}&\alplus\leq\eps^{\nu}\eps^{-s}  \eps^{-s}\leq \eps^2\leq \eps/3.
\end{align*}
To get our final network, we replace the bias term $\wtilde A_{\mathrm{sum}} b_\mathrm{last}$ (which is also bounded in absolute value by~$\eps^{-\theta}$) in the last layer of $\wtilde \Phi_{\eps,f}^{(1)}$ by the nearest element in $[-\eps^{-\theta},\eps^{-\theta}]\cap\eps^\nu\Z$ and denote the resulting network by $\wtilde \Phi_{\eps,f}$. It now easily follows that $\norm{\act{\wtilde \Phi_{\eps,f}^{(1)}}-\act{\wtilde \Phi_{\eps,f}}}_{\Wkp[k][\infty][\cube^d]}\leq \eps/3$
which implies by the triangle inequality that $\norm{f-\Realization(\widetilde{\Phi}_{\eps,f})}_{W^{k,p}((0,1)^d)} \leq \eps$.

\textbf{Step 2 (Construction of coding scheme)}: We will now show that there is a constant $C_2=C_2(d,n,p,k,\mu)>0$ and a coding scheme $\Bcal = (B_\ell)_{\ell\in\N}$ such that for each $\eps>0$ and each $f\in\Fndp$ the nonzero weights of $\widetilde{\Phi}_{\eps,f}$ are in $\ran B_{\ceil{C_2\log(1/\eps)}}$. 
 
     If we denote by $W_\eps$ the collection of  nonzero weights of $(\Psi)_{m=1}^T$ (which are independent of $f$), then we have $\pabs{W_\eps}\leq M(\Phi_{\eps,f})\leq\eps^{-s}$. Furthermore, we have $\pabs{\mesh}=2\floor{\eps^{-\theta-\nu}}+1\leq\eps^{-s_2}$ with $s_2\coloneqq \theta+\nu+2$. 
     \begin{itemize}
         \item The matrix weights in the last layer of $\wtilde \Phi_{\eps,f}$ are in the set $G_{\mathrm{mult}}\coloneqq\{x_1x_2: x_1\in W_\eps, x_2\in \mesh\}$ with cardinality bounded by $\eps^{-(s+s_2)}$.
         \item The bias in the last layer is an element of $\mesh$.
         \item The weights of $\wtilde \Phi_{\eps,f}$ in the layers $1,\ldots, L-1$ are in the set $W_\eps$.
     \end{itemize}
 Setting $C_2\coloneqq 2(s+s_2)$ it follows that there exists a surjective mapping 
 $$B_{\ceil{C_2 \log_2(1/\eps)}}: \{0,1\}^{\ceil{C_2 \log_2(1/\eps)}} \to G_{\mathrm{mult}}\cup W_\eps\cup\left(\mesh\right),$$
 which shows the claim.
\end{proof}
 
 \section{PU-properties of the Activation Functions from Table \ref{tab:ActFunctions}}\label{sec:activation_admissibility}
 
 In this section, we examine the PU-properties of the activation functions listed in Table \ref{tab:ActFunctions}.
 
 The  smoothness properties of all functions in Table \ref{tab:ActFunctions} are clear. In particular, all functions are in $C^{\infty}(\R\setminus\{0\})$.
 
 In order to show that the activation functions to follow allow for exponential (respectively polynomial) PUs, we consider the exponential (respectively polynomial)  $(j,\tau)$ admissibility conditions of Definition \ref{def:Admissible}.
  \noindent \underline{\textbf{Exact PUs.}}
      
      \begin{itemize}
      \item[\textbf{(leaky) ReLU and RePUs:}] These functions admit exact PUs. For the ReLU case, see for instance \cite{yarotsky2017error,guhring2019error}. For RePUs, this follows from the properties of B-splines (see \cite[Chapter IX]{DeBoor}).
      \end{itemize}
      
    \noindent \underline{\textbf{Exponential PUs.}}

            \begin{itemize}
      \item[\textbf{$\mathrm{ELU}_a$ for $a>0,a\neq 1$:}] Here, $j=1,$ $\tau=1,$ $A=0$ and $B=1.$ Moreover, $R>0$ can be chosen arbitrarily. Then, for $D=1,$ we have, for all $x>R,$ that $|1-\varrho'(x)|=|1-1|=0$ and, for all $x<-\R$ that $|\varrho'(x)|=|ae^x|=ae^{Dx}.$ 
       \item[\textbf{$\mathrm{ELU}_1$:}] Here, $j=2,$ $\tau=1,$ $A=0$ and $B=1.$ Moreover, $R>0$ can be chosen arbitrarily. Then, for $D=1,$ we have, for all $x>R,$ that $|1-\varrho'(x)|=|1-1|=0$ and, for all $x<-\R$ that $|\varrho'(x)|=|e^x|=e^{Dx}.$
       Moreover, we have for all $|x|>R$ that $|\varrho''(x)|\leq e^{-|x|}= e^{-D|x|}.$
       \item[\textbf{sigmoid:}] Here, $j\in\N_0$ is arbitrary, $\tau=0,$ $A=0$ and $B=1.$ Moreover, $R>0$ can be chosen arbitrarily. Then we have, for all $x>R,$ that $|1-\varrho(x)|\leq e^{-x}$ and, for all $x<-\R$ that $|\varrho(x)|\leq e^x.$
       The other statements follow from the fact that, for the sigmoid activation function, the $k$-th derivative is a finite linear combination of the powers $\varrho,\dots,\varrho^k$ of $\varrho$ (see, e.g., \cite{SigmoidDer}). Choosing $D$ suitably then shows the claim.
       \item[\textbf{tanh:}] Since $\tanh(x)=2\cdot \mathrm{sigmoid}(2x)-1,$ the proof of this statement follows from the proof of the statement for the sigmoid activation function for $A=-1,~B=1.$
       \item[\textbf{softplus:}] Here, $j\in\N_0$ is arbitrary, $\tau=1,$ $A=0$ and $B=1.$ Moreover, $R>0$ can be chosen arbitrarily. Then, for all $x>R,$ there holds $|1-\varrho'(x)|= |1-\mathrm{sigmoid}(x)|\leq e^{-x}$ and, for all $x<-\R$ that $|\varrho'(x)|=|\mathrm{sigmoid}(x)|\leq  e^x.$ The proof of (d.3) for the higher-order derivatives follows from the properties of the higher derivatives of the sigmoid function. 
         \item[\textbf{swish:}] Here, $j\in\N_0$ is arbitrary, $\tau=1,$ $A=0,$ and $B=1.$ It is not hard to see that for all $k\in\N$ there holds
         $$ \mathrm{swish}^{(k)}(x)= x\cdot \mathrm{sigmoid}^{(k)}(x)+ k\cdot \mathrm{sigmoid}^{(k-1)}(x). 
         $$
         
        Now, the statement follows from the analogous observations for the sigmoid function combined with the fact that for $r,u>0$ with $r>u$ there holds
        \begin{align*}
            \lim_{x\to \infty} \frac{xe^{-rx}}{e^{-ux}} =0,\qquad\qquad \lim_{x\to -\infty} \frac{xe^{rx}}{e^{ux}} =0 .
        \end{align*}
  \end{itemize}
  
 \noindent \underline{\textbf{Polynomial PUs.}}

  \begin{itemize}
      \item[\textbf{softsign:}] Here, $j\in\N_0$ is arbitrary, $\tau=0,$ $A= -1,$ $B=1.$  The polynomial converence properties (d.1)-(d.3) follow immediately from the definition of the softsign function. 
      \item[\textbf{inverse square root linear unit:}] Here, $j=3$, $\tau=1$, $A=0$ and $B=1$. The polynomial converence properties (d.1)-(d.3) follow immediately from the definition of the inverse square root linear unit.
      \item[\textbf{inverse square root unit:}]  Here, $j\in\N_0$ is arbitrary, $\tau=0$, $A=-1$ and $B=1$. The polynomial converence properties (d.1)-(d.3) follow immediately from the definition of the inverse square root  unit.
      \item[\textbf{arctan: }] Here, $j\in\N_0$ is arbitrary, $\tau=0$, $A=-\pi/2$ and $B=\pi/2$. The polynomial converence properties (d.1)-(d.3) follow immediately from the fact that $\varrho'(x)=1/(1+x^2)$ which in particular implies polynomial convergence behavior for $\arctan$ itself.
  \end{itemize}


\begin{thebibliography}{10}

\bibitem{adams1975sobolev}
R.~Adams.
\newblock {\em Sobolev Spaces}.
\newblock Academic Press, New York, 1975.

\bibitem{Barron1994}
A.~Barron.
\newblock {Approximation and Estimation Bounds for Artificial Neural Networks}.
\newblock {\em Mach. Learn.}, 14(1):115--133, 1994.

\bibitem{beck2018solving}
C.~Beck, S.~Becker, P.~Grohs, N.~Jaafari, and A.~Jentzen.
\newblock Solving stochastic differential equations and {Kolmogorov} equations
  by means of deep learning.
\newblock {\em arXiv preprint arXiv:1806.00421}, 2018.

\bibitem{MachLearningSPDEJentzen}
C.~Beck, W.~E, and A.~Jentzen.
\newblock {{Machine Learning Approximation Algorithms for High-Dimensional
  Fully Nonlinear Partial Differential Equations and Second-order Backward
  Stochastic Differential Equations}}.
\newblock {\em J. Nonlinear Sci.}, 29:1563--1619, 2019.

\bibitem{bolcskei2017optimal}
H.~B\"{o}lcskei, P.~Grohs, G.~Kutyniok, and P.~Petersen.
\newblock Optimal approximation with sparsely connected deep neural networks.
\newblock {\em SIAM J. Math. Data Sci.}, 1(1):8--45, 2019.

\bibitem{brenner2007mathematical}
S.~Brenner and R.~Scott.
\newblock {\em The Mathematical Theory of Finite Element Methods}, volume~15 of
  {\em Texts in Applied Mathematics}.
\newblock Springer Science+Business Media, New York, 3rd edition, 2008.

\bibitem{LectureProduct}
A.~Bressan.
\newblock {\em Lecture Notes on Functional Analysis: With Applications to
  Linear Partial Differential Equations}, volume 143 of {\em Graduate Studies
  in Mathematics 143}.
\newblock American Mathematical Society, 2012.

\bibitem{Ciarlet}
P.~G. Ciarlet.
\newblock {\em The Finite Element Method for Elliptic Problems}.
\newblock Society for Industrial and Applied Mathematics, 2002.

\bibitem{ChainRule}
G.~M. Constantine and T.~H. Savits.
\newblock {A Multivariate Faa Di Bruno Formula With Applications}.
\newblock {\em T. Am. Math. Soc.}, 348(2):503--520, 1996.

\bibitem{COSTARELLI3}
D.~Costarelli, A.~Sambucini, and G.Vinti.
\newblock Convergence in orlicz spaces by means of the multivariate max-product
  neural network operators of the kantorovich type and applications.
\newblock {\em Neural Comput. \& Applic.}, 31:5069--5078, 2019.

\bibitem{COSTARELL1}
D.~Costarelli and R.~Spigler.
\newblock Approximation results for neural network operators activated by
  sigmoidal functions.
\newblock {\em Neural Networks}, 44:101 -- 106, 2013.

\bibitem{COSTARELLI2}
D.~Costarelli and R.~Spigler.
\newblock Multivariate neural network operators with sigmoidal activation
  functions.
\newblock {\em Neural Networks}, 48:72 -- 77, 2013.

\bibitem{cybenko1989approximation}
G.~Cybenko.
\newblock Approximation by superpositions of a sigmoidal function.
\newblock {\em Math. Control Signals Syst.}, 2(4):303--314, 1989.

\bibitem{DeBoor}
C.~De~Boor.
\newblock {\em {A practical guide to splines}}.
\newblock Applied mathematical sciences. Springer, Berlin, 2001.

\bibitem{weinan2017deep}
W.~E, J.~Han, and A.~Jentzen.
\newblock Deep learning-based numerical methods for high-dimensional parabolic
  partial differential equations and backward stochastic differential
  equations.
\newblock {\em Commun. Math. Stat.}, 5(4):349--380, 2017.

\bibitem{weinan2018deep}
W.~E and B.~Yu.
\newblock The {D}eep {R}itz method: A deep learning-based numerical algorithm
  for solving variational problems.
\newblock {\em Commun. Math. Stat.}, 6(1):1--12, 2018.

\bibitem{Hardy}
D.~E. Edmunds and W.~D. Evans.
\newblock {\em Hardy operators, function spaces and embeddings}.
\newblock Springer Monographs in Mathematics. Springer, Berlin, 2004.

\bibitem{edmunds_triebel_1996}
D.~E. Edmunds and H.~Triebel.
\newblock {\em Function Spaces, Entropy Numbers, Differential Operators}.
\newblock Cambridge Tracts in Mathematics. Cambridge University Press, 1996.

\bibitem{elbrachter2018dnn}
D.~Elbr{\"a}chter, P.~Grohs, A.~Jentzen, and C.~Schwab.
\newblock {DNN} expression rate analysis of high-dimensional {PDE}s:
  Application to option pricing.
\newblock {\em arXiv preprint arXiv:1809.07669}, 2018.

\bibitem{evans1998partial}
L.~Evans.
\newblock {\em Partial Differential Equations}, volume~19 of {\em Graduate
  Studies in Mathematics}.
\newblock American Mathematical Society, 1999.

\bibitem{GeiPM2020}
M.~Geist, P.~Petersen, M.~Raslan, R.~Schneider, and G.~Kutyniok.
\newblock {Numerical Solution of the Parametric Diffusion Equation by Deep
  Neural Networks}.
\newblock {\em arXiv preprint arXiv:2004.12131}, 2020.

\bibitem{gilbarg1998elliptic}
D.~Gilbarg and N.~Trudinger.
\newblock {\em Elliptic Partial Differential Equations of Second Order}, volume
  224 of {\em A Series of Comprehensive Studies in Mathematics}.
\newblock Springer, Berlin, second edition, 1998.

\bibitem{grohs2018proof}
P.~Grohs, F.~Hornung, A.~Jentzen, and P.~von Wurstemberger.
\newblock A proof that artificial neural networks overcome the curse of
  dimensionality in the numerical approximation of {B}lack-{S}choles partial
  differential equations.
\newblock {\em arXiv preprint arXiv:1809.02362}, 2018.

\bibitem{guhring2019error}
I.~G{\"u}hring, G.~Kutyniok, and P.~Petersen.
\newblock {Error bounds for approximations with deep ReLU neural networks in
  ${W}^{s, p}$ norms}.
\newblock {\em Anal. Appl. (Singap.)}, pages 1--57, 2019.

\bibitem{guliyev2016single}
N.~J. Guliyev and V.~E. Ismailov.
\newblock A single hidden layer feedforward network with only one neuron in the
  hidden layer can approximate any univariate function.
\newblock {\em Neural comput.}, 28(7):1289--1304, 2016.

\bibitem{guliyev2018approximation}
N.~J. Guliyev and V.~E. Ismailov.
\newblock Approximation capability of two hidden layer feedforward neural
  networks with fixed weights.
\newblock {\em Neurocomputing}, 316:262--269, 2018.

\bibitem{han2018solving}
J.~Han, A.~Jentzen, and W.~E.
\newblock Solving high-dimensional partial differential equations using deep
  learning.
\newblock {\em Proc. Natl. Acad. Sci. U.S.A.}, 115(34):8505--8510, 2018.

\bibitem{han2020derivativefree}
J.~Han, M.~Nica, and A.~Stinchcombe.
\newblock {A Derivative-Free Method for Solving Elliptic Partial Differential
  Equations with Deep Neural Networks}.
\newblock {\em arXiv preprint arXiv:2001.06145}, 2020.

\bibitem{hornik1991approximation}
K.~Hornik.
\newblock Approximation capabilities of multilayer feedforward networks.
\newblock {\em Neural Netw.}, 4(2):251--257, 1991.

\bibitem{JentzenKolmogorov}
A.~Jentzen, D.~Salimova, and T.~Welti.
\newblock A proof that deep artificial neural networks overcome the curse of
  dimensionality in the numerical approximation of {Kolmogorov} partial
  differential equations with constant diffusion and nonlinear drift
  coefficients.
\newblock {\em arXiv preprint arXiv:1809.07321}, 2018.

\bibitem{katsuura2009summations}
H.~Katsuura.
\newblock Summations involving binomial coefficients.
\newblock {\em The College Mathematics Journal}, 40(4):275--278, 2009.

\bibitem{Kidger2019UniversalAW}
P.~Kidger and T.~Lyons.
\newblock {Universal Approximation with Deep Narrow Networks}.
\newblock {\em arXiv preprint arXiv:1905.08539}, 2019.

\bibitem{raslan2019parametric}
G.~Kutyniok, P.~Petersen, M.~Raslan, and R.~Schneider.
\newblock {A Theoretical Analysis of Deep Neural Networks and Parametric
  {PDEs}}.
\newblock {\em arXiv preprint arXiv:1904.00377}, 2019.

\bibitem{PetersenTransport}
F.~Laakmann and P.~Petersen.
\newblock Efficient approximation of solutions of parametric linear transport
  equations by {ReLU} {DNNs}.
\newblock {\em arXiv preprint arXiv:2001.11441}, 2020.

\bibitem{lagaris1998artificial}
I.~Lagaris, A.~Likas, and D.~Fotiadis.
\newblock Artificial neural networks for solving ordinary and partial
  differential equations.
\newblock {\em {IEEE} Trans. Neural Netw. Learn. Syst.}, 9(5):987--1000, 1998.

\bibitem{RePURates}
B.~Li, S.~Tang, and H.~Yu.
\newblock {Better Approximations of High Dimensional Smooth Functions by Deep
  Neural Networks with Rectified Power Units}.
\newblock {\em Commun. in Comp. Phys.}, 27:379--411, 2020.

\bibitem{PartitionPaper}
S.-B. Lin.
\newblock {{Generalization and Expressivity for Deep Nets }}.
\newblock {\em IEEE T. Neur. Net. Lear.}, 30(5):1392 -- 1406, 2019.

\bibitem{DeepXDE}
L.~Lu, X.~Meng, Z.~Mao, and G.~Karniadakis.
\newblock {DeepXDE}: A deep learning library for solving differential
  equations.
\newblock {\em arXiv preprint arXiv:1907.04502}, 2019.

\bibitem{Maiorov1999LowerBounds}
V.~Maiorov and A.~Pinkus.
\newblock Lower bounds for approximation by {MLP} neural networks.
\newblock {\em Neurocomputing}, 25(1-3):81--91, 1999.

\bibitem{Mhaskar:1996:NNO:1362203.1362213}
H.~Mhaskar.
\newblock Neural networks for optimal approximation of smooth and analytic
  functions.
\newblock {\em Neural Comput.}, 8(1):164--177, 1996.

\bibitem{MHASKAR1992350}
H.~Mhaskar and C.~A. Micchelli.
\newblock Approximation by superposition of sigmoidal and radial basis
  functions.
\newblock {\em Adv. Appl. Math.}, 13(3):350 -- 373, 1992.

\bibitem{SigmoidDer}
A.~A. Minai and R.~D. Williams.
\newblock On the derivatives of the sigmoid.
\newblock {\em Neural Netw.}, 6(6):845 -- 853, 1993.

\bibitem{ohnELU}
I.~Ohn and Y.~Kim.
\newblock {Smooth function approximation by deep neural networks with general
  activation functions}.
\newblock {\em Entropy}, 21(7):627, 2019.

\bibitem{OPS19_811}
J.~A.~A. Opschoor, P.~C. Petersen, and C.~Schwab.
\newblock {Deep ReLU networks and high-order finite element methods}.
\newblock {\em Anal. Appl.}, Online first:1--56, 2020.

\bibitem{petersen2017optimal}
P.~Petersen and F.~Voigtl{\"a}nder.
\newblock Optimal approximation of piecewise smooth functions using deep {ReLU}
  neural networks.
\newblock {\em Neural Netw.}, 108:296--330, 2018.

\bibitem{pinkus1999approximation}
A.~Pinkus.
\newblock Approximation theory of the {MLP} model in neural networks.
\newblock {\em Acta Numer.}, 8:143--195, 1999.

\bibitem{PowerOfDepth}
D.~Rolnick and M.~Tegmark.
\newblock The power of deeper networks for expressing natural functions.
\newblock In {\em International Conference on Learning Representations}, 2018.

\bibitem{roubivcek2013nonlinear}
T.~Roub{\'\i}{\v{c}}ek.
\newblock {\em Nonlinear Partial Differential Equations with Applications},
  volume 153 of {\em International Series of Numerical Mathematics}.
\newblock Springer Science+Business Media, Basel, second edition, 2013.

\bibitem{schmidt2017nonparametric}
J.~Schmidt-Hieber.
\newblock {Nonparametric regression using deep neural networks with ReLU
  activation function}.
\newblock {\em arXiv preprint arXiv:1708.06633}, 2017.

\bibitem{schwab2018deep}
C.~Schwab and J.~Zech.
\newblock Deep learning in high dimension: Neural network expression rates for
  generalized polynomial chaos expansions in {UQ}.
\newblock {\em Anal. Appl.}, 17(1):19--55, 2019.

\bibitem{ShaCC2015provableAppDNN}
U.~Shaham, A.~Cloninger, and R.~Coifman.
\newblock Provable approximation properties for deep neural networks.
\newblock {\em Appl. Comput. Harmon. Anal.}, 44(3):537--557, 2018.

\bibitem{sirignano2018dgm}
J.~Sirignano and K.~Spiliopoulos.
\newblock {DGM}: A deep learning algorithm for solving partial differential
  equations.
\newblock {\em J. Comput. Phys.}, 375:1339--1364, 2018.

\bibitem{stein2016singular}
E.~Stein.
\newblock {\em Singular Integrals and Differentiability Properties of
  Functions}.
\newblock Princeton University Press, Princeton, 3rd edition, 1979.

\bibitem{suzuki2018adaptivity}
T.~Suzuki.
\newblock {Adaptivity of deep ReLU network for learning in Besov and mixed
  smooth Besov spaces: optimal rate and curse of dimensionality}.
\newblock In {\em 7th International Conference on Learning Representations,
  {ICLR} 2019, New Orleans, LA, USA, May 6-9, 2019}, 2019.

\bibitem{ChebNet}
S.~Tang, B.~Li, and H.~Yu.
\newblock {ChebNet: Efficient and Stable Constructions of Deep Neural Networks
  with Rectified Power Units using Chebyshev Approximations}.
\newblock {\em arXiv preprint arXiv:1911.05467}, 2019.

\bibitem{triebel1978interpolation}
H.~Triebel.
\newblock {\em Interpolation Theory, Function Spaces, Differential Operators}.
\newblock North-Holland Publishing Company, Amsterdam, 1978.

\bibitem{yarotsky2017error}
D.~Yarotsky.
\newblock Error bounds for approximations with deep {ReLU} networks.
\newblock {\em Neural Netw.}, 94:103--114, 2017.

\bibitem{yarotsky18a}
D.~Yarotsky.
\newblock {Optimal approximation of continuous functions by very deep ReLU
  networks}.
\newblock In S.~Bubeck, V.~Perchet, and P.~Rigollet, editors, {\em Proceedings
  of the 31st Conference On Learning Theory}, volume~75 of {\em Proceedings of
  Machine Learning Research}, pages 639--649. PMLR, 06--09 Jul 2018.

\end{thebibliography}
\end{document}